\newtheorem{theorem}{Theorem}[section]
\newtheorem{definition}[theorem]{Definition}
\newtheorem{proposition}[theorem]{Proposition}
\newtheorem{lemma}[theorem]{Lemma}
\newtheorem{observation}[theorem]{Observation}
\newtheorem{claim}[theorem]{Claim}
\newenvironment{proofof}[1]{\noindent{\emph{Proof of {#1}.}}}{}
\newcommand{\R}[0]{{\ensuremath{\mathbb{R}}}}
\newcommand{\N}[0]{{\ensuremath{\mathbb{N}}}}
\newcommand{\conv}[0]{\ensuremath{\textrm{conv}}}
\newcommand{\dist}[0]{\ensuremath{\textrm{d}}}
\newcommand{\sign}[0]{\ensuremath{\textrm{sign}}}
\newcommand{\Var}[0]{\ensuremath{\textrm{Var}}}
\newcommand{\Cov}[0]{\ensuremath{\textrm{Cov}}}
\newcommand{\E}[0]{\ensuremath{\mathbb{E}}}
\renewcommand{\P}[0]{P}
\newcommand{\Q}[0]{Q}
\newcommand{\X}[0]{\boldsymbol X}
\newcommand{\bcX}[0]{\bm{\mathcal{X}}}
\newcommand{\bP}[0]{\boldsymbol P}
\newcommand{\bd}[0]{\boldsymbol d}
\newcommand{\bZ}[0]{\boldsymbol Z}
\newcommand{\bQ}[0]{\boldsymbol Q}
\newcommand{\A}[0]{\boldsymbol A}
\newcommand{\U}[0]{\boldsymbol U}
\newcommand{\Y}[0]{\boldsymbol Y}
\newcommand{\y}[0]{\boldsymbol y}
\newcommand{\cE}[0]{\mathcal{E}}
\newcommand{\D}[0]{\boldsymbol D}
\newcommand{\W}[0]{\boldsymbol W}
\newcommand{\remove}[1]{}
\newcommand{\roott}[0]{r}
\DeclareMathOperator{\intt}{int}
\DeclareMathOperator{\leftc}{left}
\DeclareMathOperator{\rightc}{right}
\DeclareMathOperator{\proj}{proj}
\newcounter{mynote}[section]
\begin{document}
\def\bred{}

\title{How Good Are Sparse Cutting-Planes?\thanks{Santanu S. Dey and Qianyi Wang were partially supported by NSF grant CMMI-1149400.}}

\author{Santanu S. Dey \and Marco Molinaro \and Qianyi Wang}




\maketitle

\begin{abstract}
Sparse cutting-planes are often the ones used in mixed-integer programing (MIP) solvers, since they help in solving the linear programs encountered during branch-\&-bound more efficiently. However, how well can we approximate the integer hull by just using sparse cutting-planes? In order to understand this question better, given a polyope $\P$ (e.g. the integer hull of a MIP), let $\P^k$ be its best approximation using cuts with at most $k$ non-zero coefficients. We consider $\dist(\P, \P^k) = \max_{x \in \P^k} \left(\min_{y \in \P} \| x - y\|\right)$ as a measure of the quality of sparse cuts. 

In our first result, we present general upper bounds on $\dist(\P, \P^k)$ which depend on the number of vertices in the polytope and exhibits three phases as $k$ increases. Our bounds imply that if $\P$ has polynomially many vertices, using half sparsity already approximates it very well. Second, we present a lower bound on $\dist(\P, \P^k)$ for random polytopes that show that the upper bounds are quite tight. Third, we show that for a class of hard packing IPs, sparse cutting-planes do not approximate the integer hull well, that is $d(\P, \P^k)$ is large for such instances unless $k$ is very close to $n$. Finally, we show that using sparse cutting-planes in extended formulations is at least as good as using them in the original polyhedron, and give an example where the former is actually much better. 

\end{abstract}


\section{Introduction}\label{intro}

Most successful mixed integer linear programming (MILP) solvers are based on branch-$\&$-bound and cutting-plane (cut) algorithms. Since MILPs belong to the class of NP-hard problems, one does not expect the size of branch-$\&$-bound tree to be small (polynomial is size) for every instance. In the case where the branch-$\&$-bound tree is not small, a large number of linear programs must be solved. It is well-known that dense cutting-planes are difficult for linear programming solvers to handle. Therefore, most commercial MILPs solvers consider sparsity of cuts as an important criterion for cutting-plane selection and use \cite{guPC,tobiasPC,narisettyPC}.

Surprisingly, very few studies have been conducted on the topic of sparse cutting-planes. Apart from cutting-plane techniques that are based on generation of cuts from single rows (which implicitly lead to sparse cuts if the underlying row is sparse), to the best of our knowledge only the paper \cite{AndersenWeismantel} explicitly discusses methods to generate sparse cutting-planes. 

The use of sparse cutting-planes may be viewed as a compromise between two competing objectives. As discussed above, on the one hand, the use of sparse cutting-planes aids in solving the linear programs encountered in the branch-$\&$-bound tree faster. On the other hand, it is possible that `important' facet-defining or valid inequalities for the convex hull of the feasible solutions are dense and thus without adding these cuts, one may not be able to attain significant integrality gap closure. This may lead to a larger branch-$\&$-bound tree and thus result in the solution time to increase. 

It is challenging to simultaneously study both the competing objectives in relation to cutting-plane sparsity. Therefore, a first approach to understanding usage of sparse cutting-planes is the following:  {If we are able to separate and use valid inequalities with a given level of sparsity (as against completely dense cuts), how much does this cost in terms of loss in closure of integrality gap?} 

Considered more abstractly, the problem reduces to a purely geometric question: Given a polytope $\P$ (which represents the convex hull of feasible solutions of a MILP), how well is $\P$ approximated by the use of sparse valid inequalities. In this paper we will study polytopes contained in the $[0,\ 1]^n$ hypercube. This is without loss of generality since one can always translate and scale a polytope to be contained in the $[0, \ 1]^n$ hypercube.
 
\vspace{-3pt}
\subsection{Preliminaries} \label{sec:prelim}
		
			A cut $a x \le b$ is called \emph{$k$-sparse} if the vector $a$ has at most $k$ nonzero components. Given a set $\P \subseteq \R^n$, define $\P^k$ as the best outer-approximation obtained from $k$-sparse cuts, that is, it is the intersection of all $k$-sparse cuts valid for $\P$. 
			
			For integers $k$ and $n$, let $[n]:= \{1, \dots, n\}$ and let $[n] \choose k$ be the set of all subsets of $[n]$ of cardinality $k$. Given a $k$-subset of indices $I \subseteq [n]$, define $\R^{\bar{I}} = \{x \in \R^n : x_i = 0 \textrm{ for all } i \in I\}$. An equivalent and handy definition of $\P^k$ is the following: $\P^k = \bigcap_{I \in {[n] \choose k}} \left(\P + \R^{\bar{I}}\right).$ Thus, if $\P$ is a polytope, {\bred then $\P^k$ is also a polytope.}

\vspace{-3pt}
\subsection{Measure of Approximation}
There are several natural measures to compare the quality of approximation provided by $\P^k$ in relation to $\P$.  For example, one may consider objective value ratio: maximum over all costs $c$ of expression $\frac{z^{c,k}}{z^c}$, where $z^{c,k}$ is the value of maximizing $c$ over $\P^k$, and $z^c$ is the same for $\P$. We discard this ratio, since this ratio can become  infinity and not provide any useful information. {\bred For example take $\P = \conv\{(0,0), (0, 1), (1, 1)\}$ and compare with $\P^1$ wrt $c = (1, -1)$}. Similarly, we may compare  the volumes of $\P$ and $\P^k$. However, this ratio is not useful if $\P$ is not full-dimensional and $\P^k$ is. 

In order to have a useful measure that is well-defined for all polytopes contained in $[0, 1]^n$, we consider the following \emph{distance measure}:
	\vspace{-4pt}
	\begin{align*}
		\dist(\P, \P^k):= \max_{x \in \P^k} \left(\min_{y \in \P} \| x - y\|\right),
	\end{align*} 
where $\| \cdot \|$ is the $\ell_2$ norm. It is easily verified that there is a vertex of $\P^k$ attaining the maximum above. Thus, alternatively the distance measure can be interpreted as the Euclidean distance between $\P$ and the farthest vertex of $\P^k$ from $\P$. 

\begin{observation}[$\dist(\P, \P^k)$ is an upper bound on depth of cut] Suppose $\alpha x \leq \beta$ is a valid inequality for $\P$ where $\|\alpha\| = 1$. Let the \emph{depth} of this cut be the smallest $\gamma \geq 0$ such that $\alpha x \leq \beta + \gamma$ is valid for $\P^k$. It is straightforward to verify that $\gamma \leq \dist(\P, \P^k)$. Therefore, the distance measure gives an upper bound on additive error when optimizing a (normalized) linear function over $\P$ and $\P^k$. 
\end{observation}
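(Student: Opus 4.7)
The plan is to argue directly from the definition of $\dist(\P,\P^k)$ and Cauchy–Schwarz. The statement essentially says that pushing a valid inequality for $\P$ outward by $\dist(\P,\P^k)$ is enough to make it valid for $\P^k$, which is intuitive: no vertex of $\P^k$ is farther than $\dist(\P,\P^k)$ from $\P$, so the hyperplane cannot be violated by more than that distance times the norm of $\alpha$.

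First, I would fix an arbitrary point $x \in \P^k$ and use the definition of the distance measure to produce a witness $y \in \P$ with $\|x - y\| \le \dist(\P,\P^k)$. This is immediate because $\min_{y \in \P} \|x-y\|$ is attained (as $\P$ is compact) and is bounded above by the outer maximum defining $\dist(\P,\P^k)$.

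Next, using validity of $\alpha x \le \beta$ for $\P$, I write
\[
\alpha^\top x \;=\; \alpha^\top y \;+\; \alpha^\top(x - y) \;\le\; \beta \;+\; \|\alpha\|\,\|x-y\| \;\le\; \beta + \dist(\P,\P^k),
\]
where the first inequality is Cauchy–Schwarz and the second uses $\|\alpha\|=1$ together with the bound on $\|x-y\|$. Since $x \in \P^k$ was arbitrary, the inequality $\alpha^\top x \le \beta + \dist(\P,\P^k)$ is valid for $\P^k$, which shows that $\gamma \le \dist(\P,\P^k)$ by minimality of $\gamma$.

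There is no real obstacle here; the only thing to be careful about is that the $\min$ in the definition of $\dist(\P,\P^k)$ is actually attained (so that the witness $y$ exists), which follows from $\P$ being a nonempty compact set. The argument would go through verbatim for any norm if one replaced Cauchy–Schwarz by the appropriate dual-norm inequality, but since the paper uses $\ell_2$ throughout, Cauchy–Schwarz suffices.
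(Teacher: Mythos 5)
Your argument is correct and is exactly the ``straightforward'' verification the paper has in mind: decompose $\alpha^\top x$ along a closest point $y\in\P$, apply Cauchy--Schwarz with $\|\alpha\|=1$, and conclude. The paper omits the proof, but there is no other natural route, so this matches the intended reasoning.
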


\begin{observation}[Comparing $\dist(\P, \P^k)$ to $\sqrt{n}$] Notice that the largest distance between any two points in the $[0,\ 1 ]^n$ hypercube is at most $\sqrt{n}$. Therefore in the rest of the paper we will compare the value of $\dist(\P, \P^k)$ to $\sqrt{n}$. 
\end{observation}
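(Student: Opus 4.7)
The statement has two essentially independent pieces that I would verify in sequence. The first is the geometric claim that the diameter of the unit hypercube is at most $\sqrt{n}$, and the second, which is implicit and needed for the comparison to be meaningful, is that both $\P$ and $\P^k$ actually live inside $[0,1]^n$ so that the distance measure cannot exceed this diameter.

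My plan is to start by observing that for any two points $x, y \in [0,1]^n$ we have $|x_i - y_i| \le 1$ for every coordinate $i$, so
\[
\|x - y\|^2 \;=\; \sum_{i=1}^n (x_i - y_i)^2 \;\le\; \sum_{i=1}^n 1 \;=\; n,
\]
giving $\|x-y\| \le \sqrt{n}$ immediately. I would also note (tightness, though not strictly required) that the bound is achieved by $x = (0,\dots,0)$ and $y = (1,\dots,1)$, so $\sqrt{n}$ is the right normalization.

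Next, to justify that $\dist(\P, \P^k)$ itself is bounded by $\sqrt{n}$, I would point out that $\P^k \subseteq [0,1]^n$: since $\P \subseteq [0,1]^n$ by assumption, each of the $2n$ box inequalities $x_i \ge 0$ and $x_i \le 1$ is a valid inequality for $\P$ that is $1$-sparse, hence $k$-sparse for any $k \ge 1$, and therefore appears in the intersection defining $\P^k$. Combined with $\P \subseteq [0,1]^n$, this puts every point $x \in \P^k$ and every point $y \in \P$ inside the unit cube, so the first step applies and gives $\min_{y \in \P}\|x-y\| \le \sqrt{n}$ for each $x \in \P^k$, hence $\dist(\P, \P^k) \le \sqrt{n}$.

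There is no real obstacle here; the only subtlety is to remember that $\P^k \subseteq [0,1]^n$ must be argued (via the $1$-sparsity of the box constraints), rather than merely assumed from the fact that $\P$ lies in the cube. Once that is in place, the diameter computation is a one-line application of Cauchy–Schwarz-free coordinatewise bounding.
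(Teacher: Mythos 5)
Your argument is correct and matches the paper's (implicit) reasoning: the paper simply asserts the diameter bound $\sqrt{n}$ for $[0,1]^n$, which is exactly the coordinatewise computation you give, and implicitly relies on $\P^k\subseteq[0,1]^n$. Your explicit check that the box constraints $0\le x_i\le 1$ are $1$-sparse (hence $k$-sparse for all $k\ge 1$) and therefore valid for $\P^k$ is a worthwhile detail the paper leaves unstated, but it does not change the substance of the argument.
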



\subsection{Some Examples}

In order to build some intuition we begin with some examples in this section. Let $\P := \{ x \in [0, \ 1]^n: ax \leq b\}$ where $a$ is a non-negative vector. It is straightforward to verify that in this case, $\P^k: = \{  x \in [0, \ 1]^n: a^Ix \leq b \ \forall I \in {[n]\choose k}\}$, where $a^I_j:= a_j $ if $j \in I$ and $a^I_j = 0$ otherwise.

\vspace{-5pt}
\paragraph{Example 1:} Consider the simplex $\P = \{x \in [0,1]^n : \sum_{i =1}^n x_i \le 1\}$. Using the above observation, we have that $\P^k = \conv\{e^1, e^2, \dots, e^n, \frac{1}{k}e\}$, where $e^j$ is the unit vector in the direction of the $j^{th}$ coordinate and $e$ is the all ones vector. Therefore the distance measure between $\P$ and $\P^k$ is $\sqrt{n} (\frac{1}{k} - \frac{1}{n}) \approx \frac{\sqrt{n}}{k}$, attained by the points $\frac{1}{n} e \in \P$ and $\frac{1}{k} e \in \P^k$. This is quite nice because with $k \approx \sqrt{n}$ (which is pretty reasonably sparse) we get a constant distance.  Observe also that the \emph{rate of change of the distance measure} follows a `single pattern' - we call this a \emph{single phase example}.  See Figure \ref{fig1}(a) for $\dist(\P, \P^k)$ plotted against $k$ (in blue) and $k \cdot \dist(\P, \P^k)$ plotted against $k$ (in green).

\vspace{-10pt}
\begin{figure}[h]
\centering
\includegraphics[width=0.9\textwidth, height=1.35in]{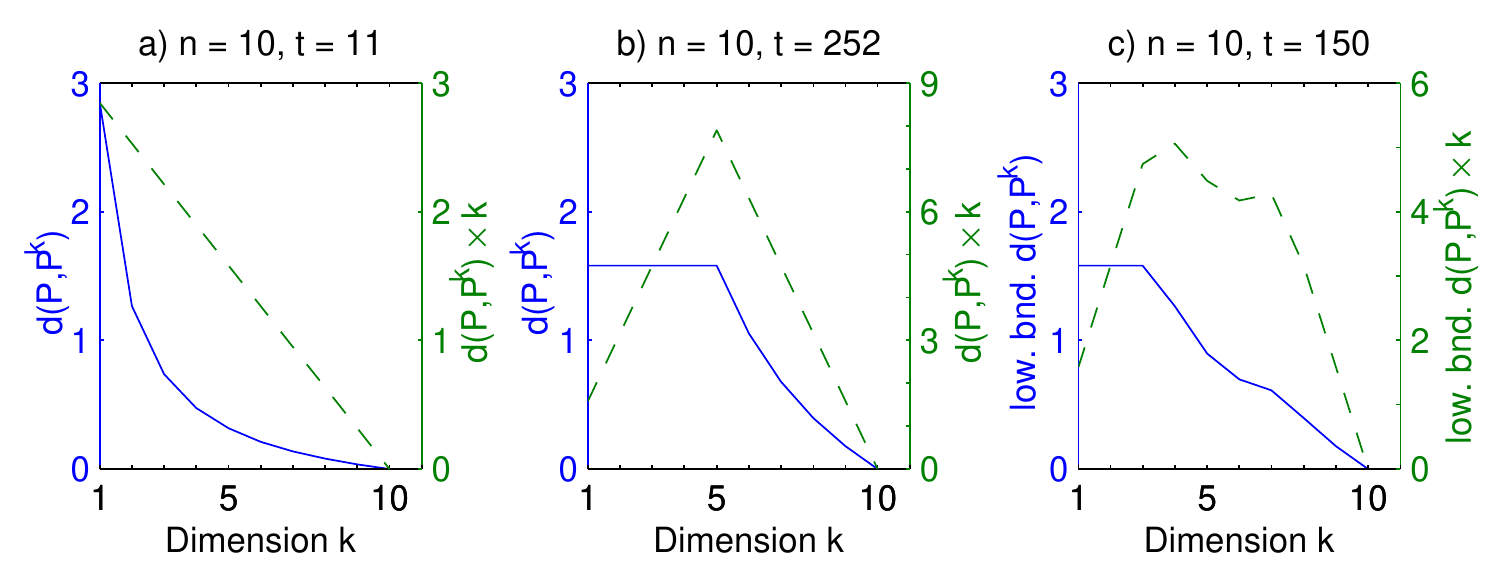}
\caption{(a) Sparsity is good. (b) Sparsity is not so good. (c) Example with three phases.} \label{fig1}
\end{figure}

\vspace{-15pt}
\paragraph{Example 2:} Consider the set $\P = \{x \in [0,1]^n : \sum_i x_i \le \frac{n}{2}\}$. We have that $\P^k: = \{  x \in [0, \ 1]^n: \sum_{i \in I}x_i \leq \frac{n}{2}, \ \forall I \in {[n]\choose k}\}$. Therefore, for all $k \in \{1, \dots, n/2\}$ we have $\P^k = [0, \ 1]^n$ and hence $\dist(\P, \P^k) = \sqrt{n}/2$. Thus, we stay with distance $\Omega(\sqrt{n})$ (the worst possible for polytopes in $[0,1]^n$) even with $\Theta(n)$ sparsity. Also observe that for $k  > \frac{n}{2}$, we have 
$\dist(\P, \P^k) = \frac{n\sqrt{n}}{2k} - \frac{\sqrt{n}}{2}$. Clearly the {rate of change of the distance measure} has \emph{two phases}, first phase of $k$ between $1$ and $\frac{n}{2}$ and the second phase of $k$ between $\frac{n}{2}$ and $n$.  See Figure \ref{fig1}(b) for the plot of $\dist(\P, \P^k)$ against $k$ (in blue) and of $k \cdot \dist(\P, \P^k)$ against $k$ (in green).

\vspace{-5pt}
\paragraph{Example 3:} We present an experimental example in dimension $n = 10$. The polytope $\P$ is now set as the convex hull of $150$ binary points randomly selected from the hyperplane $\{x \in \R^{10} : \sum_{i = 1}^{10} x_i = 5\}$. We experimentally computed lower bounds on $\dist(\P, \P^k)$ which are plotted in Figure \ref{fig1}(c) as the blue line (for details on this computation see Section \ref{app:empiricalLB} of the appendix). Notice that there are now \emph{three phases}, which are more discernible in the plot between the lower bound on $k \cdot \dist(\P, \P^k)$ and $k$ (in green).

The above examples serve to illustrate the fact that different polytopes, behave very differently when we try and approximate them using sparse inequalities. 

\vspace{-5pt}
\section{Main Results} 

\subsection{Upper Bounds}\label{main:up}
Surprisingly, it appears that the complicated  behavior of $\dist(\P, \P^k)$ as $k$ changes can be described to some extent in closed form. Our first result is {\bred a nontrivial} upper bounds on $\dist(\P, \P^k)$ for general polytopes. This {\bred result} is proven in Section \ref{sec:upbnd}. 

\begin{theorem}[Upper Bound on $\dist(\P, \P^k)$]\label{thm:upperall}
Let $n \geq 2$. Let $\P \subseteq [0, 1]^n$ be the convex hull of points $\{p^1, \dots, p^t\}$. Then 
\begin{enumerate}
\item $\dist(\P, \P^k) \leq 4 \max\left\{\frac{n^{1/4}}{\sqrt{k}} \sqrt{8 \max_{i \in [t]} \|p^i\|} \sqrt{\log 4 t n}, \frac{8 \sqrt{n}}{3 k} \log 4tn \right\}$

\item $\dist(\P, \P^k) \leq 2\sqrt{n}\left(\frac{n}{k} -1\right)$.
\end{enumerate}
\end{theorem}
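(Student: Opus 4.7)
The plan is to prove both bounds by constructing, for each $x \in \P^k$, a nearby point $y \in \P$ (or else deriving a contradiction from its absence). For Part 2 the argument is a direct averaging. For each $k$-subset $I \subseteq [n]$, the definition $\P^k = \bigcap_I (\P + \R^{\bar I})$ yields some $y^I \in \P$ with $y^I_i = x_i$ for all $i \in I$. The uniform average $\bar y = \binom{n}{k}^{-1}\sum_{I} y^I$ lies in $\P$ by convexity, and a coordinate-wise computation gives $\bar y_i = (k/n)\,x_i + (1-k/n)\,\xi_i$ for some $\xi_i \in [0,1]$, so $|\bar y_i - x_i| \leq 1 - k/n$. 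Summing over $i \in [n]$ gives $\|\bar y - x\| \leq \sqrt{n}\,(1 - k/n) = (n-k)/\sqrt n$, which is dominated by $2\sqrt n\,(n/k - 1)$ since $k \leq 2n$; this yields Part 2.

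For Part 1 the plan is a probabilistic argument by contradiction. Assume $x \in \P^k$ satisfies $\dist(x,\P) > d$ where $d$ is (a constant times) the claimed bound. Let $y^* \in \P$ be the closest point to $x$ and set $a^* = (x - y^*)/d$; then $\|a^*\|_2 = 1$ and $a^*\cdot x - h_\P(a^*) = d$, where $h_\P(a) := \max_{y \in \P} a\cdot y$. Form a random $k$-sparse $\tilde a$ with $\E[\tilde a] = a^*$ by sampling $I$ uniformly from $\binom{[n]}{k}$ and setting $\tilde a_i = (n/k)\, a_i^* \mathbf{1}[i \in I]$. The cut $\tilde a\cdot z \leq h_\P(\tilde a)$ is $k$-sparse and valid for $\P$, so by the definition of $\P^k$ the inequality $\tilde a\cdot x \leq h_\P(\tilde a)$ must hold for every realization of $I$. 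Apply Bernstein's inequality: for each vertex $p^j$, $\tilde a\cdot p^j$ has variance at most $(n/k)\sum_i a_i^{*2}(p^j_i)^2 \leq (n/k)\,\|p^j\|_2 \leq (n/k)\,\max_i \|p^i\|$---the middle inequality uses $\|a^*\|_2 = 1$ together with $(p^j_i)^2 \leq p^j_i$ for $p^j_i \in [0,1]$ and Cauchy--Schwarz---and maximum summand at most $n/k$. A union bound over the $t$ vertices (to control $h_\P(\tilde a) = \max_j \tilde a\cdot p^j$) together with a matching concentration bound for $\tilde a \cdot x$ yields, with positive probability, both $\tilde a\cdot x \geq a^*\cdot x - \eta$ and $h_\P(\tilde a) \leq h_\P(a^*) + \eta$, where $\eta$ takes the form $\max\bigl(\sqrt{(n/k)\max_i\|p^i\|\log(tn)},\ (n/k)\log(tn)\bigr)$---matching the two Bernstein regimes and the two terms inside the theorem's maximum. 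Combining, $\tilde a\cdot x - h_\P(\tilde a) \geq d - 2\eta > 0$ whenever $d$ exceeds the claimed bound, yielding a violated $k$-sparse cut and contradicting $x \in \P^k$.

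The main obstacle is squeezing the sharp factor $n^{1/4}/\sqrt k$ (rather than the naïve $\sqrt{n/k}$) out of Bernstein. This hinges on the tightened inequality $\sum_i a_i^{*2}(p^j_i)^2 \leq \|p^j\|$, which requires both $\|a^*\|_2 = 1$ and the $[0,1]^n$-hypercube assumption, and on bounding the variance of $\tilde a\cdot x$ in terms of $\max_i \|p^i\|$ rather than the crude $\|x\|_2 \leq \sqrt n$. For the latter, the representation $x = y^* + d\,a^*$ with $y^* \in \P$ lets one decompose $\sum_i a_i^{*2} x_i^2$ and control it by $\max_i \|p^i\|$ plus an $O(d)$-term that is absorbed into the final inequality via a self-improvement argument (if $d$ is small the extra term is negligible, and if $d$ is large the second Bernstein term $(n/k)\log(tn)$ dominates).
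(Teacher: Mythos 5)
Your Part 2 is correct and in fact simpler and strictly stronger than the paper's argument: rather than reducing to a halfspace lemma and averaging $k$-sparse inequalities (Lemma~\ref{lemma:sepOneHyp}), you average the witness points $y^I \in \P$ directly and get $\|\bar y - x\|_2 \le \sqrt{n}\,(1 - k/n)$, which is at most $2\sqrt{n}(n/k - 1)$ (and smaller for $k < n$).

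Part 1, however, has a genuine gap: the sparsification scheme is too weak. Drawing a uniformly random $k$-subset $I$ and setting $\tilde a_i = (n/k)a_i^*\mathbf 1[i\in I]$ gives $\Var(\tilde a\cdot p^j) \le (n/k)\sum_i a_i^{*2}(p^j_i)^2 \le (n/k)\|p^j\|$ and max summand $n/k$ (and, incidentally, dependent coordinates, so Bernstein as stated does not literally apply). Feeding this into Bernstein produces $\eta \approx \max\bigl(\sqrt{(n/k)\max_i\|p^i\|\log tn},\, (n/k)\log tn\bigr)$, which you claim matches the theorem. It does not: the theorem's first term squares to order $(\sqrt n/k)\max_i\|p^i\|\log tn$ and its second term is order $(\sqrt n/k)\log tn$, so both of your terms are larger by a factor of roughly $\sqrt n$ at the level of variance / max summand (i.e.\ $n^{1/4}$ and $\sqrt n$ in the bound itself). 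The missing idea is \emph{$\ell_1$-weighted importance sampling}: the paper keeps coordinate $i$ \emph{independently} with probability $\min(1,\alpha|a_i^*|)$ at value $\sign(a_i^*)/\alpha$ (deterministically $a_i^*$ if $\alpha|a_i^*|\ge 1$), with $\alpha = k/(2\sqrt n)$, so that the expected sparsity is $\alpha\|a^*\|_1 \le k/2$. Coordinates with large $|a_i^*|$ are kept more often but assigned a proportionally smaller value, yielding per-coordinate variance $(p^j_i)^2|a_i^*|/\alpha$, hence $\Var(\tilde a\cdot p^j) \le \frac{1}{\alpha}\sum_i(p^j_i)^2|a_i^*| \le \frac{2\sqrt n}{k}\|p^j\|$ by Cauchy--Schwarz, and max summand $1/\alpha = 2\sqrt n/k$ --- each a factor $\sqrt n/2$ smaller than in your scheme. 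That $\sqrt n$ savings is precisely what produces the $n^{1/4}/\sqrt k$ and $\sqrt n/k$ scalings in the theorem; uniform subsampling as you describe it only reaches the weaker $O(\sqrt{n/k}\,\sqrt{\log tn})$, the version of the bound with $\max_i\|p^i\|$ replaced by $\sqrt n$, not the sharper claim involving $\sqrt{\max_i\|p^i\|}$.
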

Since $\max_{i \in \{1, \dots, t\}}||p^i|| \leq \sqrt{n}$ and the first upper bound yields nontrivial values only when {\bred $k \geq \frac{32}{3} \log 4 tn$}, a simpler (although weaker) expression for the first  upper bound is {\bred $8 \sqrt{2}\frac{\sqrt{n}}{\sqrt{k}}\sqrt{\log 4 tn}$} \footnote{If $k \geq \frac{8\log 4tn }{9}$, then $\frac{n^{\frac{1}{4}}\sqrt{8\sqrt{n}}\sqrt{\log 4tn} }{\sqrt{k}} \geq \frac{8\sqrt{n}\log 4tn}{3k}$ }.  We make two observations based on Theorem \ref{thm:upperall}. 

Consider polytopes with `few' vertices, say $n^q$  vertices for some constant $q$. Suppose we decide to use cutting-planes with half sparsity (i.e. $k = \frac{n}{2}$), a reasonable assumption in practice. Then plugging in these values, it is easily verified that {\bred $\dist(\P, \P^k) \leq {16}\sqrt{(q + 1)\log n} \approx c \sqrt{\log n}$} for a constant $c$, which is a significantly small quantity in comparison to $\sqrt{n}$. In other words, \emph{if the number of vertices is small, independent of the location of the vertices, using half sparsity cutting-planes allows us to approximate the integer hull very well.} We believe that as the number of vertices increase, the structure of the polytope becomes more important in determining $\dist(\P, \P^k)$ and Theorem \ref{thm:upperall} only captures the worst-case scenario. Overall, Theorem \ref{thm:upperall} presents a theoretical justification for the use of sparse cutting-planes in many cases.  

Theorem \ref{thm:upperall} supports the existence of  three phases in the behavior of $\dist(\P, \P^k)$ as $k$ varies: \textbf{(Small $k$)} When {\bred $k \le 128 \log 4tn$} the (simplified) upper bounds are larger than $\sqrt{n}$, indicating that `no progress' is made in approximating the shape of $\P$ (this is seen Examples 2 and 3). \textbf{(Medium $k$)} When {\bred $128 \log 4tn \le k \lesssim n - \sqrt{n \log 4tn}$} the first upper bound in Theorem \ref{thm:upperall} dominates. \textbf{(Large $k$)} When $k \gtrsim n - \sqrt{n \log 4tn}$ the upper bound $2\sqrt{n}\left(\frac{n}{k} -1\right)$ dominates. In particular,  in this phase, $k \cdot \dist(\P, \P^k) \leq 2n^{3/2} - 2\sqrt{n} k$, i.e., the upper bound times $k$  is a linear function of $k$. All the examples in Section \ref{intro} illustrate this behaviour.

\subsection{Lower Bounds}\label{main:low}
How good is the quality of the upper bound presented in Theorem \ref{thm:upperall}? Let us first consider the second upper bound in Theorem \ref{thm:upperall}. Then observe that for the second example in Section $\ref{intro}$, this upper bound is tight up to a constant factor for $k$ between the values of $\frac{n}{2}$ and ${n}$.

We study lower bounds on $\dist(\bP, \bP^k)$ for random 0/1 polytopes in Section \ref{sec:lowbnd} that show that the first upper bound in Theorem \ref{thm:upperall} is also quite tight.

\begin{theorem} \label{thm:LBRandom01}
{\bred Let $k, t, n \in \mathbb{Z}_{+}$ satisfying $64 \leq k \leq n$ and $(0.5k^2 \log n + 2k + 1)^2 \le t \le e^n$.} Let $\X^1, \X^2, \ldots, \X^t$ be independent uniformly random points in $\{0,1\}^n$ and let $\bP = \conv(\X^1, \X^2, \ldots, \X^t)$. Then with probability at least $1/4$ we have that $$\dist(\bP, \bP^k) \ge \min\left\{\frac{\sqrt{n}}{\sqrt{k}} \frac{\sqrt{\log t}}{110 \sqrt{\log n}}, \frac{\sqrt{n}}{8}\right\} \left(\frac{1}{2} - \frac{1}{k^{3/2}} \right) - 3 \sqrt{\log t}.$$
\end{theorem}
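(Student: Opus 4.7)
The plan is to exhibit a specific candidate point $x^{\ast} \in \bP^{k}$ that is far from $\bP$ and certify the distance using a single separating hyperplane. A natural candidate is the ``diagonal'' point $x^{\ast} = \alpha e$, where $e$ is the all-ones vector and $\alpha = \tfrac{1}{2} + \gamma$ with $\gamma > 0$ to be chosen as large as possible. Using the identity $\bP^{k} = \bigcap_{I \in \binom{[n]}{k}} (\bP + \R^{\bar{I}})$ together with $\bP = \conv(\X^{1}, \ldots, \X^{t})$, the requirement $x^{\ast} \in \bP^{k}$ is equivalent to demanding $\alpha e_{I} \in \conv\{\X^{1}_{I}, \ldots, \X^{t}_{I}\}$ for \emph{every} $k$-subset $I$, where $e_{I}$ and $\X^{i}_{I}$ denote restrictions to the coordinates in $I$.

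The technical heart is to choose $\gamma \approx \sqrt{\log t /(k \log n)}$ so that, for each fixed $I$, this containment fails with probability at most $n^{-k}$; a union bound over the $\binom{n}{k} \le n^{k}$ subsets then gives containment for all $I$ simultaneously. For a single $I$ we invoke Farkas: $\alpha e_{I} \in \conv\{\X^{i}_{I}\}$ iff $\max_{i} v \cdot \X^{i}_{I} \ge \alpha(v \cdot e_{I})$ for every $v \in \R^{I}$. The binding direction (by Cauchy--Schwarz, since $\|v\|/(v \cdot e_{I}) \ge 1/\sqrt{k}$ for non-negative $v$) is $v = e_{I}$, where the condition reads $\max_{i} \sum_{j \in I} \X^{i}_{j} \ge \alpha k$. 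Using the anti-concentration bound $\Pr[\mathrm{Bin}(k,1/2) \ge (1/2 + \gamma)k] \ge e^{-2 k \gamma^{2}}$ (valid in the moderate-deviation regime), the probability that no sample achieves this is at most $e^{-t\, e^{-2 k \gamma^{2}}}$, and requiring this to be below $n^{-k}$ yields $\gamma \lesssim \sqrt{\log(t/(k \log n))/(2 k)}$. Passing from $v = e_{I}$ to all directions via an $\epsilon$-net on the unit sphere of $\R^{I}$ (of size $e^{O(k)}$) costs a further factor in the failure probability, but this is absorbed thanks to the hypothesis $t \ge (0.5 k^{2} \log n + 2 k + 1)^{2}$.

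For the distance lower bound, separate $x^{\ast}$ from $\bP$ using the unit direction $e/\sqrt{n}$:
\[
\dist(\bP, \bP^{k}) \,\ge\, \dist(x^{\ast}, \bP) \,\ge\, \frac{e \cdot x^{\ast} - \max_{y \in \bP} e \cdot y}{\|e\|} \,=\, \alpha \sqrt{n} \,-\, \frac{\max_{i} \sum_{j} \X^{i}_{j}}{\sqrt{n}}.
\]
A standard Chernoff bound on $t$ samples of $\mathrm{Bin}(n, 1/2)$ gives $\max_{i} \sum_{j} \X^{i}_{j} \le n/2 + O(\sqrt{n \log t})$ with high probability, so the distance is at least $\gamma \sqrt{n} - O(\sqrt{\log t})$; plugging in $\gamma = \Theta(\sqrt{\log t/(k \log n)})$ reproduces the main term $\Omega(\sqrt{n \log t/(k \log n)})$ of the theorem. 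The multiplicative factor $(\tfrac{1}{2} - 1/k^{3/2})$ absorbs the $\epsilon$-net discretization loss, the additive $-3 \sqrt{\log t}$ absorbs the Chernoff fluctuation of $\max_{i} \sum_{j} \X^{i}_{j}$, and the alternative cap $\sqrt{n}/8$ arises because $\alpha \le 1$ forces $\gamma \le 1/2$. A final union bound over the success events of the two steps delivers the claimed probability $1/4$.

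The principal obstacle is the tradeoff in the second paragraph: the per-$I$ failure probability must beat the union bound over $\binom{n}{k}$ subsets, and this is precisely what forces the extra $\sqrt{\log n}$ factor in the denominator of $\gamma$ (a single $I$ alone would permit $\gamma \approx \sqrt{\log t/k}$). Extending the containment test from the binding direction $v = e_{I}$ to all directions via the $\epsilon$-net contributes further discretization constants that must be tracked carefully; the numerical hypothesis on $t$ is tailored so that both union bounds can be absorbed while keeping $\gamma$ within a constant of the optimal $\sqrt{\log t/(k \log n)}$.
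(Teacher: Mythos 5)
Your high-level skeleton matches the paper: propose a diagonal point $\alpha e$ slightly above $\tfrac{1}{2}e$, certify membership in $\bP^k$ by handling each $k$-subset $I$ through the identity $\bP^k = \bigcap_I(\bP + \R^{\bar I})$, and finish the distance bound by separating with the hyperplane $e/\sqrt{n}$ together with a Chernoff bound on $\max_i \sum_j \X^i_j$. The closing step of your argument and the union bound over $I$ against a deviation bound are exactly the paper's Lemmas \ref{lemma:PSmall} and \ref{lemma:toughImpliesDist} in spirit.

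However, there is a genuine gap in the membership argument, and it is precisely the hard part of the theorem. Writing $\alpha e_I \in \conv\{\X^1_I,\dots,\X^t_I\}$ as a support-function condition $\alpha(v\cdot e_I) \le \max_i v\cdot \X^i_I$ for all $v$ is correct, but your claim that $v = e_I$ is ``binding'' does not follow from Cauchy--Schwarz or from anything else: satisfying the inequality for $v = e_I$ does not imply it for any other $v$. Concretely, $\conv\{\X^i_I\}$ is a random 0/1 polytope in $\R^k$ with many facets of many orientations, and you must beat the union bound over \emph{all} of them, not just the aggregate facet. Once you recognize this, the Binomial anticoncentration $\Pr[\mathrm{Bin}(k,1/2)\ge(1/2+\gamma)k]\ge e^{-2k\gamma^2}$ is no longer the right estimate, because for a general direction $v$ with coordinates of varying magnitude the corresponding anticoncentration is substantially worse. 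The paper's key technical lemma (Lemma \ref{lemma:anticoncentration}) addresses exactly this: it proves anticoncentration for $a\bZ$ with general $a \in [-1,1]^n$ by \emph{bucketing} coordinates with similar $|a_i|$ and multiplying per-bucket probabilities, which is where the extra exponent $60\log n$ --- and hence the $\sqrt{\log n}$ in the denominator of the final bound --- comes from. Your $\epsilon$-net sketch does not fix this: even with a finite net of directions, you still need anticoncentration in every net direction (not just $e_I$), and a Lipschitz/net argument cannot transfer anticoncentration from $e_I$ to nearby $v$ because the tail probability degrades genuinely as $v$ departs from uniform, not merely by a discretization error. If your outline were correct, it would yield a bound without the $\sqrt{\log n}$ loss, which would already be tighter than what the paper proves; that should raise a flag.

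A secondary issue: the paper reduces the set of relevant directions not via an $\epsilon$-net but via a combinatorial bound on the facet normals of the $k$-sparse closure (Lemma \ref{lemma:facetsPk}, invoking a polyhedral result about 0/1 polytopes, giving integral $k$-sparse normals with $\|\ell\|_\infty \le k^{k/2}$). This is what the hypothesis $t \ge (0.5k^2\log n + 2k + 1)^2$ is tuned to absorb, not an $\epsilon$-net cardinality. To repair your proposal you would need (i) a rigorous anticoncentration statement valid for all $v$ in the support of $I$ (the bucketing argument or an equivalent), and (ii) a concrete finite family of directions sufficient to certify containment (either the polyhedral characterization or a carefully executed net argument whose cardinality you actually account for).
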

	
	Let us compare this lower bound with the simpler expression {\bred $8\sqrt{2}\frac{\sqrt{n}}{\sqrt{k}}\sqrt{\log tn}$} for the first part of the upper bound of Theorem \ref{thm:upperall}. We focus on the case where the minimum in the lower bound is achieved by the first term. Then comparing the leading term {\bred $\sqrt{\frac{n}{k}} \frac{\sqrt{\log t}}{2 \cdot 110 \sqrt{\log n}}$} in the lower bound with the upper bound, we see that these quantities match up to a factor of {\bred $O\big(\frac{ \sqrt{\log(tn)} \sqrt{\log n}}{ \sqrt{\log t}}\big)$}, showing that for many $0/1$ polytopes the first upper bound of Theorem \ref{thm:upperall} is quite tight. We also remark that in order to simplify the exposition we did not try to optimize constants and lower order terms in our bounds. 

The main technical tool for proving this lower bound is a new anticoncentration result for linear combinations $a\X$, where the $\X_i$'s are independent Bernoulli random variables (Lemma \ref{lemma:anticoncentration}). The main difference from standard anticoncentration results is that the latter focus on variation around the standard deviation; in this case, standard tools such as the Berry-Esseen Theorem or the Paley-Zygmund Inequality\cite{anirban}
 can be used to obtain constant-probability anticoncentration. However, we need to control the behavior of $a\X$ much further {\bred away} from its standard deviation, where we cannot hope to get constant-probability anticoncentration.


	\vspace{-5pt}
	\subsection{Hard Packing Integer Programs}
	
		We also study well-known, randomly generated, hard packing integer program instances (see for instance \cite{letchford}). Given parameters $n, m, M \in \N$, the convex hull of the packing IP is given by $\bP = \conv(\{x \in \{0,1\}^n : \A^j x \le \frac{\sum_{i} \A^j_i}{2}, ~\forall j \in [m]\})$, where the $\A^j_i$'s are chosen independently and uniformly in the set $\{0, 1, \ldots, M\}$. Let $(n, m, M)$-PIP denote the distribution over the generated $\P$'s.

	The following result shows the limitation of sparse cuts for these instances. 
	
	\begin{theorem} \label{thm:PIPLB}
		Consider $n, m, M \in \N$ such that $n \ge 50$ and $8 \log 8n \le m \le n$. Let $\bP$ be sampled from the distribution $(n, m, M)$-PIP. Then with probability at least $1/2$, $\dist(\bP, \bP^k) \ge \frac{\sqrt{n}}{2} \left(\frac{2}{\max\{\alpha,1\}} (1 - \epsilon)^2 - (1 + \epsilon') \right),$ where $c = k/n$ and 
		\begin{gather*}
			\frac{1}{\alpha} = \frac{M}{2(M+1)} \left[\frac{n - 2 \sqrt{n \log 8 m}}{c ((2-c) n + 1) + 2 \sqrt{10 c nm}} \right], ~~~~ \epsilon = \frac{24 \sqrt{\log 4 n^2 m}}{\sqrt{n}}, \\
			\epsilon' = \frac{3 \sqrt{\log 8n}}{\sqrt{m} - 2 \sqrt{\log 8n}}.
		\end{gather*}
	\end{theorem}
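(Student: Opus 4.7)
My plan is to exhibit a concrete point $z = \beta e \in \bP^k$ with $\beta$ close to $1/\max\{\alpha,1\}$, and then apply the Cauchy--Schwarz bound $\dist(z, \bP) \ge (e^T z - \max_{y \in \bP} e^T y)/\sqrt{n}$. For the upper bound on $\max_{y \in \bP} e^T y$, I would use that every $y \in \bP$ is a convex combination of feasible integer points $x \in \{0,1\}^n$, so $e^T y \le \max\{|\mathrm{supp}(x)|: x \in \{0,1\}^n,\ \A^j x \le \tfrac12 \sum_i \A^j_i \; \forall j\}$. For fixed $x$ with support size $s$, the quantity $b_j - \A^j x$ equals $\tfrac12(\sum_{i\notin \mathrm{supp}(x)} \A^j_i - \sum_{i\in \mathrm{supp}(x)} \A^j_i)$, which has mean $(n-2s)M/4$ and variance $\Theta(nM^2)$; Bernstein across the $m$ independent constraints gives $\P[x \text{ feasible}] \le \exp(-\Omega(m(s - n/2)^2/n))$ when $s > n/2$, and a union bound over the $2^n$ binary vectors yields $\max |\mathrm{supp}(x)| \le (n/2)(1+\epsilon')$ with probability at least $3/4$ over $\A$, matching the $(1+\epsilon')$ term of the theorem.

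The heart of the argument is showing $z = \beta e \in \bP^k$, i.e., producing for every $I \in \binom{[n]}{k}$ a witness $y^I \in \bP$ with $y^I_I = \beta e_I$. I would use a probabilistic extension: draw $X^I \in \{0,1\}^n$ with independent coordinates $X^I_i \sim \mathrm{Bernoulli}(\beta)$ for $i \in I$ and $X^I_i \sim \mathrm{Bernoulli}(\gamma)$ for $i \in \bar I$, where $\gamma$ is picked so that $\beta \sum_{i \in I} \A^j_i + \gamma \sum_{i \in \bar I} \A^j_i$ lies comfortably below $b_j = \tfrac12 \sum_i \A^j_i$. Under a suitable good event over $\A$ (described below), Bernstein applied to each of the $m$ independent constraints yields $\P[X^I \text{ feasible}] \ge 3/4$, so $y^I := \E[X^I \mid X^I \text{ feasible}]$ is a convex combination of feasible integer points and therefore lies in $\bP$. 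Since $|y^I_i - \E X^I_i| \le \P[X^I \text{ infeasible}]$, the vector $y^I$ matches $\beta e_I + \gamma e_{\bar I}$ coordinate-wise up to an $O(\epsilon)$ error. To upgrade to exact equality $y^I_I = \beta(1 - O(\epsilon))\, e_I$, I would apply a small uniform shrinkage---mixing $y^I$ with the origin $0 \in \bP$ and possibly averaging against a symmetrized copy built from the same random extension---so that $z = \beta(1-O(\epsilon)) e$ actually lies in $\bP^k$. This two-step shrinkage accounts for the $(1-\epsilon)^2$ factor in the theorem's bound.

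The main obstacle is that ``$\P[X^I \text{ feasible}] \ge 3/4$'' must hold simultaneously for all $\binom{n}{k}$ choices of $I$ under a single good event over $\A$. I would handle this by uniformly controlling $\sum_{i \in I} \A^j_i$ around $c \sum_i \A^j_i$ via Bernstein combined with a union bound over $I \in \binom{[n]}{k}$ and $j \in [m]$: the resulting logarithmic factor $\log(\binom{n}{k} m) \le k \log(en/k) + \log m$, combined with the partial-sum variance of order $c(2-c) n \cdot M^2$ for a sum of $k$ out of $n$ Uniform$\{0,\ldots,M\}$ variables, produces exactly the $2\sqrt{10 c n m}$ correction in the denominator of $1/\alpha$. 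The $2\sqrt{n \log 8m}$ correction in the numerator arises analogously from a lower-tail bound on $\sum_i \A^j_i$, and the $M/(2(M+1))$ prefactor reflects the exact ratio of the mean $M/2$ to the support width of the discrete uniform. Plugging $\beta = 1/\max\{\alpha,1\}$ into the Cauchy--Schwarz inequality gives $e^T z \ge n\beta(1-\epsilon)^2 = (n/2)(2/\max\{\alpha,1\})(1-\epsilon)^2$, and subtracting $(n/2)(1+\epsilon')$ and dividing by $\sqrt{n}$ yields the claimed lower bound on $\dist(\bP, \bP^k)$.
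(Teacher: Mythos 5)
Your high-level strategy matches the paper's: both exhibit a point $\approx \frac{(1-\epsilon)^2}{\max\{\alpha,1\}}\,e$ in $\bP^k$ by showing that for each $I$ of size $k$, a shrunk copy of the incidence vector of $I$ lies in $\bP$, and both finish with Cauchy--Schwarz against a near-valid inequality of the form $\sum_i x_i \lesssim n/2$. However, two of the technical steps as you describe them have gaps that would need to be repaired.

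First, the bound on $\max_{|I|=k}\sum_{i\in I}\A^j_i$. You propose Bernstein plus a union bound over the $\binom{n}{k}m$ pairs $(I,j)$, asserting this ``produces exactly the $2\sqrt{10cnm}$ correction.'' It does not: a union bound over $\binom{n}{k}$ sets forces a deviation of order $\sigma\sqrt{\log\binom{n}{k}} \approx M\sqrt{cn}\cdot\sqrt{cn\log(1/c)}$, i.e.\ a $\sqrt{k\log(n/k)}$ factor, which for constant $c$ is of the same order as the mean $kM/2$ and has no $\sqrt{m}$ in it. Your stated ``partial-sum variance of order $c(2-c)nM^2$'' is also off --- the $(2-c)$ factor in the theorem's denominator comes from the \emph{mean} $\frac{c(2n-cn+1)}{2}$ of the sum of the top $cn$ order statistics, not from a variance. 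The paper avoids the union bound entirely by observing that $\max_{|I|=k}\U\bar x = \U_{(n)}+\cdots+\U_{(n-k+1)}$ is itself a single random variable (Lemma~\ref{lemma:orderStat}); its variance is only $O(cn)$ because of the covariance structure of order statistics, so Chebyshev at confidence $1-1/(8m)$ gives the $\sqrt{10cnm}$ term directly. This is a genuinely different --- and tighter --- argument, and the specific form of $1/\alpha$ in the theorem cannot be recovered by your route.

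Second, the construction of the witness in $\bP$. You set $y^I := \E[X^I \mid X^I\ \text{feasible}]$ and claim $|y^I_i-\E X^I_i| \le \Pr[X^I\ \text{infeasible}]$ is $O(\epsilon)$. But your Bernstein estimate only gives $\Pr[X^I\ \text{feasible}]\ge 3/4$, so the error bound is $1/4$, not $O(\epsilon)$, and the subsequent ``shrinkage by mixing with the origin and averaging against a symmetrized copy'' is not precise enough to recover an \emph{exact} coordinate-wise match on $I$, which is what membership in $\bP + \R^{\bar I}$ requires. The paper's Lemma~\ref{lemma:PIPFracFeasible} handles this cleanly and deterministically (given $\A$): it takes $n^2$ i.i.d.\ roundings $\X^w$ of $\bar x$ with retention probability $p=\frac{1}{\alpha}(1-\epsilon)$, shows with constant probability that all are feasible \emph{and} their average exceeds $\frac{1}{\alpha}(1-\epsilon)^2\bar x$ coordinate-wise, and then uses that $\bP$ is downward-closed (packing-type) to shrink to exactly $\frac{1}{\alpha}(1-\epsilon)^2\bar x$. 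Note also there is no need for a nonzero $\gamma$ on $\bar I$; the paper just takes $\bar x$ to be the incidence vector of $I$, which makes the accounting simpler.

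One place where your approach is actually \emph{different but fine}: for bounding $\max_{y\in\bP}e^Ty$ you union-bound over all $2^n$ integer points and bound the probability that a point of support $>\frac{n}{2}(1+\epsilon')$ is feasible; the paper instead aggregates the $m$ rows into a single valid cut $(1-\frac{2\sqrt{\log 8n}}{\sqrt m})\sum_i x_i \le \frac n2 + \frac{\sqrt{n\log 8}}{\sqrt m}$ (Lemma~\ref{lemma:PIPvalidCut}) by controlling the $n$ aggregated coefficients. Both routes give an $\epsilon'$ of order $1/\sqrt m$ up to log factors, and yours is arguably just as natural, although you should note it does not literally reproduce the $\sqrt{\log 8n}$ in the theorem's $\epsilon'$.
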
		
	
	Notice that when $m$ is sufficiently large, and $n$ reasonably larger than $m$, we have $\epsilon$ and $\epsilon'$ approximately 0, and the above bound reduces to approximately $\frac{\sqrt{n}}{2} \left(\left(\frac{M}{M+1}\right) \left(\frac{n}{k (2-n/k)}\right) - 1\right) \approx \frac{\sqrt{n}}{2} \left(\frac{n}{k (2-n/k)} - 1\right)$, which is within a constant factor of the upper bound from Theorem \ref{thm:upperall}. The poor behavior of sparse cuts 
	gives an indication for the hardness of these instances and suggests that denser cuts should be explored in this case. 
	
	One interesting feature of this result is that it works directly with the IP formulation, not relying on an explicit linear description of the convex hull.

	
	\vspace{-5pt}
	\subsection{Sparse Cutting-Planes and Extended Formulations}	
	
	 Let $\proj_x : \R^{n} \times \R^{m} \rightarrow \R^n$ denote the projection operator onto the first $n$ coordinates. We say that a set $\Q \subseteq \R^n \times \R^m$ is an \emph{extended formulation} of $\P \subseteq \R^n$ if $\P = \proj_x(\Q)$. 
	
	 As our final result we remark that using sparse cutting-planes in extended formulations is at least as good as using them in the original polyhedron, and sometime much better. These results are proved in Section \ref{sec:exended}.
	 
\begin{proposition} \label{lemma:extFormContainment}
	Consider a polyhedron $\P \subseteq \R^n$ and an extended formulation $\Q \subseteq \R^n \times \R^m$ for it. Then $\proj_x(\Q^k) \subseteq (\proj_x (\Q))^k = \P^k$.
\end{proposition}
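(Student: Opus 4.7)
The statement splits into an equality and an inclusion. The equality $(\proj_x(\Q))^k = \P^k$ is immediate from the hypothesis $\P = \proj_x(\Q)$ together with the definition of the $k$-sparse closure, so the only real content is the inclusion $\proj_x(\Q^k) \subseteq \P^k$.

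My plan is to prove the inclusion by working directly with valid inequalities. By the definition $\P^k = \bigcap \{ax \le b : a \in \R^n \text{ has at most } k \text{ nonzeros and } ax \le b \text{ is valid for } \P\}$, it suffices to show that every $k$-sparse inequality valid for $\P$ is also satisfied by every point of $\proj_x(\Q^k)$. So I would fix an arbitrary such inequality $ax \le b$ with $a \in \R^n$ having at most $k$ nonzero coordinates, and aim to show that $a\bar x \le b$ whenever $\bar x \in \proj_x(\Q^k)$.

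The key move is the trivial lifting: extend $a$ to $(a,0) \in \R^n \times \R^m$ by padding with zeros in the auxiliary coordinates. This padded vector has the same support as $a$, hence at most $k$ nonzeros, so the inequality $(a,0)\cdot (x,y) \le b$ is a $k$-sparse inequality in $\R^{n+m}$. Since $\proj_x(\Q) = \P$, every $(x,y) \in \Q$ satisfies $ax \le b$, i.e.\ $(a,0)\cdot(x,y) \le b$ is valid for $\Q$. Therefore it is valid for $\Q^k$. Given any $\bar x \in \proj_x(\Q^k)$ with preimage $(\bar x, \bar y) \in \Q^k$, we obtain $a\bar x = (a,0)\cdot(\bar x, \bar y) \le b$, as desired.

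There is no real obstacle: the whole argument hinges on the observation that sparsity in the original space is preserved when one pads coefficients with zeros to lift an inequality into the extended space. The one point to be mindful of is using the characterization of $\P^k$ as the intersection of all $k$-sparse valid halfspaces (rather than the Minkowski-sum description $\bigcap_I (\P + \R^{\bar I})$ from Section~\ref{sec:prelim}), since the halfspace description makes the lifting argument a one-liner; if one preferred, the same fact could equivalently be derived from the Minkowski-sum form by noting that $\Q + \R^{\bar I \cup [n+1,n+m]}$ projects into $\P + \R^{\bar I}$ for every $I \in \binom{[n]}{k}$.
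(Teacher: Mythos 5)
Your proof is correct, and it takes a genuinely different route from the paper's. You work with the primary definition of the $k$-sparse closure as the intersection of all $k$-sparse valid halfspaces, and the entire argument is the observation that zero-padding a $k$-sparse valid inequality for $\P = \proj_x(\Q)$ gives a $k$-sparse valid inequality for $\Q$, hence one that survives in $\Q^k$. The paper instead works with the Minkowski-sum description $\P^k = \bigcap_I (\P + \R^{\bar I})$: it introduces the operator $\tau_I(S) = S + \R^{\bar I}$, proves a commutation lemma ($\tau_{I_x}(\proj_x(\Q)) = \proj_x(\tau_I(\Q))$ for $I \subseteq [n+m]$), and then derives the inclusion by rewriting the intersection and pushing the projection inside. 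Your argument is shorter and arguably more transparent, at the cost of not producing the commutation lemma as a reusable by-product; the two are otherwise equivalent in strength.

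One small quibble with your closing parenthetical: the set $I \cup [n+1,n+m]$ has cardinality $k + m$, so $\Q + \R^{\overline{I \cup [n+1,n+m]}}$ is \emph{not} one of the terms $\Q + \R^{\bar J}$, $J \in \binom{[n+m]}{k}$, appearing in the definition of $\Q^k$; knowing that it projects into $\P + \R^{\bar I}$ therefore does not immediately bound $\proj_x(\Q^k)$. The correct choice is simply $J = I$ regarded as a subset of $[n+m]$: then $|J| = k$, so $\Q^k \subseteq \Q + \R^{\bar J}$, and $\proj_x(\Q + \R^{\bar J}) = \P + \R^{\bar I}$ because the omitted $y$-coordinates are precisely absorbed by the free directions in $\R^{\bar J}$. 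This repaired version is essentially the paper's proof with the commutation lemma inlined. Since this was only an aside and your main argument stands on its own, this does not affect the correctness of what you submitted.
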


	\begin{proposition} \label{lemma:goodExtFormulation}
		Consider $n \in \N$ and assume it is a power of 2. Then there is a polytope $\P \subseteq \R^n$ such that: 
\begin{enumerate}
\item $\dist(\P, \P^k) = \sqrt{n/2}$ for all $k \le n/2$.
\item There is an extended formulation $\Q \subseteq \R^{n} \times \R^{2n - 1}$ of $\P$ such that $\proj_x(\Q^3) = \P$. 
\end{enumerate}
	\end{proposition}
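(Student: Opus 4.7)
The plan is to take the polytope from Example~2, namely $\P := \{x \in [0,1]^n : \sum_{i=1}^n x_i \le n/2\}$, and to construct $\Q$ by putting one auxiliary variable on each node of a complete binary tree on $n$ leaves, so that every defining inequality of $\Q$ has support at most~$3$.

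Part~1 essentially repeats the calculation of Example~2. Fix any $I \in \binom{[n]}{k}$ with $k \le n/2$. Both $\mathbf{0}$ and $\mathbf{1}_I$ lie in $\P$ (the latter has coordinate sum $|I| \le n/2$), and together with the standard basis vectors $\{e_i\}_{i \in I} \subseteq \P$ this shows that the projection of $\P$ onto the coordinates in $I$ is the full cube $[0,1]^I$. Hence $\P + \R^{\bar I} \supseteq [0,1]^n$, and intersecting over all such $I$ yields $\P^k = [0,1]^n$. The farthest vertex of $\P^k$ from $\P$ is $\mathbf{1}$: its projection onto the hyperplane $\sum y_i = n/2$ equals $\tfrac12 \mathbf{1}$, which already lies in $[0,1]^n$, so $\dist(\P,\P^k) = \|\tfrac12 \mathbf{1}\| = \sqrt{n}/2$.

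For part~2, since $n$ is a power of two, let $T$ be a balanced binary tree with $n$ leaves labeled $1, \dots, n$; it has exactly $2n-1$ nodes. I introduce an auxiliary variable $y_v$ for every node $v$ of $T$, so $\Q \subseteq \R^n \times \R^{2n-1}$. I define $\Q$ by the following inequalities (with each equality split into two inequalities of the same support): $y_{\ell_i} = x_i$ for every leaf $\ell_i$ labeled $i$ (2-sparse); $y_v = y_{L(v)} + y_{R(v)}$ for every internal node $v$ with children $L(v), R(v)$ (3-sparse); $0 \le x_i \le 1$ for every $i$ (1-sparse); and $y_{\roott} \le n/2$ at the root $\roott$ (1-sparse). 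A straightforward induction along $T$ gives $y_v = \sum_{i \in \mathrm{leaves}(v)} x_i$ for every $v$, so the root constraint is equivalent to $\sum_{i=1}^n x_i \le n/2$ and therefore $\proj_x(\Q) = \P$.

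To finish, I observe that by construction $\Q$ is the intersection of valid 3-sparse inequalities, which gives $\Q^3 \subseteq \Q$; combined with the automatic containment $\Q \subseteq \Q^3$ this yields $\Q^3 = \Q$, whence $\proj_x(\Q^3) = \proj_x(\Q) = \P$. I do not anticipate any real obstacle: the only conceptual ingredient is to place a variable on \emph{every} node of $T$ rather than identifying each leaf variable $y_{\ell_i}$ with $x_i$, which is what keeps the leaf-coupling inequalities 2-sparse and prevents any single defining inequality of $\Q$ from involving more than three variables overall.
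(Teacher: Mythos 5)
Your proof is correct and takes essentially the same route as the paper: the same polytope $\P = \{x \in [0,1]^n : \sum_i x_i \le n/2\}$ from Example~2, and the same binary-tree extended formulation with one auxiliary $y$-variable per node (including the leaves, which is exactly the device the paper also uses to keep every defining constraint $3$-sparse). One remark: the value you compute, $\|e - \tfrac12 e\| = \tfrac12 \sqrt{n} = \sqrt{n/4}$, is in fact the correct one and agrees with Example~2; the $\sqrt{n/2}$ appearing in the proposition's statement (and repeated in the paper's own proof of it) is a typographical slip.
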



\vspace{-5pt}
\section{Upper Bound}\label{sec:upbnd}

	In this section we prove Theorem \ref{thm:upperall}. In fact we prove the same bound for polytopes in $[-1,1]^n$, which is a slightly stronger result. The following well-known property is crucial for the constructions used in both parts of the theorem.
	
	\begin{observation}[Section 2.5.1 of \cite{boydVanden}] \label{obs:sepClosesPoint}
		Consider a compact convex set $S \subseteq \R^n$. Let $\bar{x}$ be a point outside $S$ and let $\bar{y}$ be the closest point to $\bar{x}$ in $S$. Then setting $a = \bar{x} - \bar{y}$, the inequality $ax \le a\bar{y}$ is valid for $S$ and cuts $\bar{x}$ off.   
	\end{observation}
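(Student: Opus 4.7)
The plan is to derive this from the first-order (variational) optimality inequality satisfied by the Euclidean projection onto $S$. Compactness of $S$ together with continuity of $z \mapsto \|z - \bar x\|^2$ ensures that a minimizer $\bar y$ exists. The key step I would establish is
\[
(\bar x - \bar y) \cdot (z - \bar y) \le 0 \qquad \text{for all } z \in S,
\]
which follows from convexity by a one-line perturbation argument: for $z \in S$ and $t \in (0,1)$, the point $y_t := (1-t)\bar y + t z$ lies in $S$ by convexity of $S$, and optimality of $\bar y$ gives $\|y_t - \bar x\|^2 \ge \|\bar y - \bar x\|^2$. Expanding the left-hand side as $\|\bar y - \bar x\|^2 + 2 t (\bar y - \bar x) \cdot (z - \bar y) + t^2 \|z - \bar y\|^2$, canceling, dividing by $t$, and letting $t \to 0^+$ yields the displayed inequality.

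With $a := \bar x - \bar y$, the variational inequality rewrites as $a z \le a \bar y$ for every $z \in S$, which is precisely validity of the cut $a x \le a \bar y$ for $S$. For the cut-off claim, since $\bar x \notin S$ we have $\bar x \ne \bar y$ and therefore $a \ne 0$, so
\[
a \bar x - a \bar y = a \cdot (\bar x - \bar y) = \|a\|^2 > 0,
\]
meaning $\bar x$ strictly violates $a x \le a \bar y$.

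There is no genuine obstacle here, as this is a classical separation result. The only point requiring any care is the justification of the variational inequality, and the convexity-based perturbation argument above is self-contained; strict convexity of the squared Euclidean norm would additionally deliver uniqueness of $\bar y$, but the argument only needs the existence of a single minimizer, which is guaranteed by compactness of $S$.
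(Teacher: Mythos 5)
Your proof is correct and is precisely the standard projection argument that the paper delegates to Section 2.5.1 of Boyd--Vandenberghe; the variational inequality $(\bar x - \bar y)\cdot(z - \bar y)\le 0$ derived via the $t\to 0^+$ perturbation is the key step there as well. Nothing to add.
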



	\subsection{Proof of First Part of Theorem \ref{thm:upperall}}

		Consider a polytope $\P = \conv\{p^1, p^2, \ldots, p^t\}$ in $[-1,1]^n$. Define $$\lambda^* = \max\left\{\frac{n^{1/4}}{\sqrt{k}} \sqrt{8 \max_i \|p^i\|} \sqrt{\log 4 t n}, \frac{8 \sqrt{n}}{3 k} \log 4tn \right\}.$$ 
		In order to show that $\dist(\P, \P^k)$ is at most $4 \lambda^*$ we show that every point at distance more than $4 \lambda^*$ from $\P$ is cut off by a valid inequality for $\P^k$. Assume until the end of this section that $4 \lambda^*$ is at most $\sqrt{n}$, otherwise the result is trivial; in particular, this implies that the second term in the definition of $\lambda^*$ is at most $\sqrt{n}/4$ and hence $k \ge 8 \log 4tn$ {\bred (In fact, $k \ge \frac{32}{3} \log 4tn$, but  $k \ge 8 \log 4tn$ suffices for the rest of the proof).}
		
		So let $u \in \R^n$ be a point at distance more than $4 \lambda^*$ from $\P$. Let $v \in \P$ be the closest point in $\P$ to $\P^k$. We can write $u = v + \lambda d$ for some vector $d$ with $\|d\|_2 = 1$ and $\lambda > 4\lambda^*$. From Observation \ref{obs:sepClosesPoint}, inequality $dx \le dv$ is valid for $\P$, so in particular $d p^i \le dv$ for all $i \in [t]$; in addition, it that this inequality cuts off $u$: $du = dv + \lambda > dv$. The idea is to use this extra slack factor $\lambda$ in the previous equation to show we can `sparsify' the inequality $dx \le dv$ while maintaining separation of $\P$ and $u$. It then suffices to prove the following lemma.
		
		\begin{lemma} \label{lemma:existsSep}
			There is a $k$-sparse vector $\tilde{d} \in \R^n$ such that 
				{\bred
				\begin{enumerate}
					\item $\tilde{d} p^i \le \tilde{d} v + \frac{\lambda}{2}$, for all $i \in [t]$
					\item $\tilde{d} u > \tilde{d} v + \frac{\lambda}{2}$.
				\end{enumerate}}
		\end{lemma}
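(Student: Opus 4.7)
The plan is to prove the lemma by the probabilistic method: I would construct a random $k$-sparse vector $\tilde d$ whose expectation (in the weak sense $\mathbb{E}[\tilde d x] = dx$ for every fixed $x$) agrees with $d$, and then show via concentration that with positive probability the deviations $|\tilde d y - d y|$ are simultaneously small for every $y$ in the finite family $\{p^1 - v, \ldots, p^t - v,\; u - v\}$. Since $dp^i \le dv$ for every $i$ and $du = dv + \lambda$, controlling each deviation by at most $\lambda/4$ immediately yields $\tilde d p^i \le \tilde d v + \lambda/2$ and $\tilde d u \ge \tilde d v + \lambda/2$, which is exactly what the lemma asks.

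Concretely, I would use importance sampling of coordinates of $d$: since $\|d\|_2 = 1$, I would draw $k$ i.i.d.\ coordinates $j_1,\dots,j_k$ from a distribution $\pi$ on $[n]$ (either $\pi_j \propto |d_j|$ or $\pi_j \propto d_j^2$, whichever gives the correct final bound) and set $\tilde d = \frac{1}{k}\sum_{\ell=1}^k \frac{d_{j_\ell}}{\pi_{j_\ell}} e_{j_\ell}$. This is an unbiased estimator of $d$ in the inner-product sense, is at most $k$-sparse, and for a fixed test vector $y$ with $\|y\|_\infty \le 1$ one can bound both the per-sample magnitude $M = \max_j |d_j y_j|/(k \pi_j)$ and the variance $\sigma^2 = \sum_\ell \Var(\tilde d_{j_\ell} y_{j_\ell})$ in terms of $\|d\|_1$, $\|d\|_2$, and $\|y\|$ or $\|y\|_\infty$. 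Bernstein's inequality then gives, for each fixed $y$,
\[
\Pr\!\left[\, |\tilde d y - d y| > \tfrac{\lambda}{4} \,\right] \;\le\; 2\exp\!\left(-\frac{(\lambda/4)^2}{2\sigma^2 + \tfrac{2}{3}M(\lambda/4)}\right).
\]
Applying this to each of the $t+1$ test vectors $p^i - v$ and $u - v$ (note $\|p^i - v\|_\infty \le 2$ since $\P \subseteq [-1,1]^n$, which only costs a factor of $2$), a union bound over these $t+1$ events yields a positive probability that all deviations are simultaneously at most $\lambda/4$, provided the exponent beats $\log(4(t+1))$.

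The main obstacle is calibrating the sampling distribution so that the two Bernstein terms, the sub-Gaussian one $\sqrt{2\sigma^2 \log 4tn}$ and the sub-exponential one $\tfrac{2}{3}M \log 4tn$, match the two summands in the definition of $\lambda^*$: the first must scale like $\frac{n^{1/4}\sqrt{\max_i \|p^i\|}}{\sqrt k}\sqrt{\log 4tn}$, and the second like $\frac{\sqrt n}{k}\log 4tn$. The appearance of $\sqrt{\max_i \|p^i\|}$ (rather than $\max_i\|p^i\|$ or $\sqrt n$) in the variance term is the delicate point and is what forces a particular choice of $\pi$; the identity $\|d\|_1 \le \sqrt n \, \|d\|_2 = \sqrt n$ and the bound $\max_i\|p^i - v\| \le 2\max_i\|p^i\|$ should supply the necessary factors. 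Finally, since the sampling is with replacement the support of $\tilde d$ is at most $k$, so $k$-sparsity is automatic and no extra rounding step is needed. Choosing $\lambda > 4\lambda^*$ leaves enough margin in the exponent for the union bound to go through with probability strictly less than $1$, establishing the existence claimed by the lemma.
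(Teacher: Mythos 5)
Your high-level strategy---random sparsification of $d$ followed by concentration and a union bound---is the same as the paper's, and your importance-sampling construction has the genuine advantage that $k$-sparsity is automatic (the paper uses independent per-coordinate Bernoulli thresholding and must spend a separate Bernstein argument to show the random vector is $k$-sparse with probability $\ge 1 - 1/4n$). However, there is a real gap in how you handle property 2 (separating $u$ from $v$), and fixing it requires a different idea than the one you describe.

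The issue: you propose to control $\bigl|\tilde d\,(u-v) - d\,(u-v)\bigr| \le \lambda/4$ with failure probability at most roughly $1/(4(t+1))$, via Bernstein, and to union-bound this together with the $t$ events for the $p^i - v$. But $u - v = \lambda d$, so under any reasonable importance-sampling distribution (e.g.\ $\pi_j \propto |d_j|$) one has
\[
\Var\bigl(\tilde d\,(u-v)\bigr) \;=\; \lambda^2\,\Var(\tilde d\, d) \;\le\; \frac{\|d\|_1}{k}\,\lambda^2 \;\le\; \frac{\sqrt n}{k}\,\lambda^2,
\]
and this lower-order structure is essentially tight. When $k$ is small compared with $\sqrt n$ (which the theorem must allow, since the interesting regime starts around $k \approx \log(4tn)$), the standard deviation of $\tilde d\,(u-v)$ is \emph{much larger than} $\lambda/4$, so Bernstein (or Chebyshev) gives a failure probability bounded away from $0$ but not from $1$---nowhere near $1/(4(t+1))$. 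Your symmetric union bound over the $t+1$ events therefore does not close. The paper sidesteps this by noticing that property 2 only needs to hold with \emph{some} positive probability, and that the random variable $\tilde{\D}d$ is nonnegative, has mean $1$, and is bounded above by roughly $n$; applying Markov's inequality to $n - \tilde{\D}d$ yields $\Pr(\tilde{\D}d > 1/2) \ge 1/(2n-1)$. This tiny success probability, combined with failure probabilities of at most $1/4n$ each for $k$-sparsity and for property 1, still gives a positive overall success probability. Your construction actually admits the same Markov-type argument (indeed with $\tilde d\, d \le \|d\|_1 \le \sqrt n$ you would get the even better bound $\Pr(\tilde d\, d > 1/2) \ge 1/(2\sqrt n - 1)$), so the fix is to replace the Bernstein bound on the $u - v$ event with this one-sided Markov argument and keep the union bound for the remaining events.

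A secondary, smaller point: to recover the exact form of $\lambda^*$, which involves $\sqrt{\max_i\|p^i\|}$ rather than $\sqrt{\max_i\|p^i - v\|}$, the paper first reduces $\max_i \bZ(p^i - v)$ to $2\max_i |\bZ p^i|$ (using $|\bZ v| \le \max_i |\bZ p^i|$ since $v \in \P$) and then bounds $\Var(\bZ p^i)$ using $p^i \in [0,1]^n$ and Cauchy--Schwarz. Working directly with the test vectors $p^i - v$ as you propose will give an analogous bound in terms of $\|p^i - v\|$, which is not obviously controllable by $\max_i\|p^i\|$; you would need the same symmetrization step.
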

		
		To prove the lemma we construct a random vector $\tilde{\D} \in \R^n$ which, with non-zero probability, is $k$-sparse and satisfies the two other requirements of the lemma. Let $\alpha = \frac{k}{2 \sqrt{n}}$. Define $\tilde{\D}$ as the random vector with independent coordinates, where $\tilde{\D}_i$ is defined as follows: if $\alpha |d_i| \ge 1$, then $\tilde{\D}_i = d_i$ with probability 1; if $\alpha |d_i| < 1$, then $\tilde{\D}_i$ takes value $\sign(d_i)/\alpha$ with probability $\alpha |d_i|$ and takes value $0$ with probability $1 - \alpha |d_i|$. (For convenience we define $\sign(0) = 1$.) 
			
		The next {\bred proposition} follows directly from the definition of $\tilde{\D}$.	
		
		\begin{proposition} \label{obs:dTilde}
			For every vector $a \in \R^n$ the following hold:
			\begin{enumerate}
				\item $\E[\tilde{\D}a] = da$
				\item $\Var(\tilde{\D}a) \le \frac{1}{\alpha} \sum_{i \in [n]} a_i^2 |d_i|$
				\item $|\tilde{\D}_i a_i - \E[\tilde{\D}_i a_i]| \le \frac{|a_i|}{\alpha}.$ 
			\end{enumerate}
		\end{proposition}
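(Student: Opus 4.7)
The plan is to prove each of the three items by first reducing to a coordinatewise statement (using linearity of expectation for (1), independence for (2), and a pointwise bound for (3)), and then resolving the single-coordinate case by splitting on whether $\alpha |d_i| \ge 1$ or $\alpha |d_i| < 1$. In the first regime $\tilde{\D}_i$ is deterministically equal to $d_i$, so every relevant quantity is immediate; the work is in the second regime, where $\tilde{\D}_i$ is a two-valued random variable calibrated so that $\E[\tilde{\D}_i] = d_i$.

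For item (1), I would compute $\E[\tilde{\D}_i]$ in each case. The deterministic case is trivial, and in the randomized case $\E[\tilde{\D}_i] = (\sign(d_i)/\alpha) \cdot (\alpha |d_i|) + 0 \cdot (1 - \alpha|d_i|) = \sign(d_i)\,|d_i| = d_i$, so linearity of expectation yields $\E[\tilde{\D} a] = \sum_i a_i \E[\tilde{\D}_i] = da$. For item (2), independence of the coordinates of $\tilde{\D}$ gives $\Var(\tilde{\D} a) = \sum_i a_i^2 \Var(\tilde{\D}_i)$; the variance is $0$ in the deterministic case, while in the randomized case $\E[\tilde{\D}_i^2] = (1/\alpha^2)(\alpha |d_i|) = |d_i|/\alpha$, so $\Var(\tilde{\D}_i) = |d_i|/\alpha - d_i^2 \le |d_i|/\alpha$, giving the claimed bound after summing.

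For item (3), note that by (1) we have $\E[\tilde{\D}_i a_i] = a_i d_i$, so the deviation is $0$ in the deterministic case. In the randomized case, $\tilde{\D}_i a_i - a_i d_i$ takes only the two values $a_i \sign(d_i)(1/\alpha - |d_i|)$ and $-a_i d_i$; both have absolute value at most $|a_i|/\alpha$, using $|d_i| \ge 0$ for the first and $|d_i| < 1/\alpha$ (which is precisely the case assumption) for the second.

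There is no real technical obstacle here; the proposition is essentially a direct verification from the definition of $\tilde{\D}$. The only point requiring care is bookkeeping with $\sign(d_i)$ (including the convention $\sign(0) = 1$) so that the expectation identity holds in every regime and the absolute-value bounds in (3) remain valid when some $d_i$ vanishes.
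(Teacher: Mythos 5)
Your proof is correct and is simply a careful spelling-out of what the paper itself asserts without proof (``follows directly from the definition of $\tilde{\D}$''): a coordinatewise case split on whether $\alpha|d_i|\ge 1$, combined with linearity of expectation for (1), independence of coordinates for (2), and a pointwise two-value check for (3). All three verifications are right, including the handling of $d_i=0$ under the convention $\sign(0)=1$.
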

			
		\begin{claim}
			With probability at least $1-1/4n$, $\tilde{\D}$ is $k$-sparse.
		\end{claim}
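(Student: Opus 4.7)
The plan is to show that the number of nonzero coordinates of $\tilde{\D}$ is a sum of independent $\{0,1\}$ indicators with expectation at most $k/2$, and then apply a multiplicative Chernoff bound to conclude $k$-sparsity with the claimed probability.

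First, let $Z_i = \mathbbm{1}[\tilde{\D}_i \neq 0]$; these are independent since the coordinates of $\tilde{\D}$ are independent, and the $k$-sparsity event is exactly $\{\sum_{i \in [n]} Z_i \le k\}$. From the definition of $\tilde{\D}_i$, in the case $\alpha |d_i| \ge 1$ we have $\E[Z_i] = 1 \le \alpha |d_i|$, and in the case $\alpha|d_i| < 1$ we have $\E[Z_i] = \alpha|d_i|$. Thus in all cases $\E[Z_i] \le \alpha |d_i|$, and summing over $i$ with Cauchy--Schwarz together with $\|d\|_2 = 1$ gives
\begin{equation*}
    \E\!\left[\sum_{i \in [n]} Z_i\right] \;\le\; \alpha \|d\|_1 \;\le\; \alpha \sqrt{n}\, \|d\|_2 \;=\; \alpha \sqrt{n} \;=\; \frac{k}{2},
\end{equation*}
where the last equality uses the choice $\alpha = k/(2\sqrt{n})$.

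Next I would apply the standard multiplicative Chernoff bound for sums of independent indicators: if $X = \sum_i Z_i$ and $\mu_0 \ge \E[X]$, then $\Pr[X \ge (1+\delta)\mu_0] \le \bigl(e^{\delta}/(1+\delta)^{1+\delta}\bigr)^{\mu_0}$. Taking $\mu_0 = k/2$ and $\delta = 1$ yields
\begin{equation*}
    \Pr\!\left[\sum_{i \in [n]} Z_i > k\right] \;\le\; \left(\frac{e}{4}\right)^{k/2}.
\end{equation*}
It remains to check that the right-hand side is at most $1/(4n)$, which is a direct consequence of the lower bound $k \ge 8 \log 4tn$ noted just before the claim: indeed $(k/2)\log(4/e) \ge 4\log(4n)\cdot\log(4/e) \ge \log(4n)$, since $4\log(4/e) > 1$. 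Complementing gives $\Pr[\tilde{\D}\text{ is } k\text{-sparse}] \ge 1 - 1/(4n)$, as required.

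No step here is a real obstacle; the only thing worth being careful about is the inequality $\E[Z_i] \le \alpha|d_i|$ in the deterministic case $\alpha|d_i| \ge 1$, and the use of Cauchy--Schwarz to pass from $\|d\|_1$ to $\sqrt{n}\,\|d\|_2$, which is what forces the choice $\alpha = k/(2\sqrt{n})$ to match the target sparsity $k$.
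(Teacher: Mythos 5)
Your proof is correct and follows the same basic strategy as the paper's: show that the number of nonzero coordinates of $\tilde{\D}$ has expectation at most $\alpha\|d\|_1 \le k/2$, and then apply a concentration inequality together with $k \ge 8 \log 4tn$ to deduce a tail probability of at most $1/4n$. The difference is in the tool: the paper constructs an auxiliary vector $a$ so that $\tilde{\D}a$ equals the nonzero count, bounds $\Var(\tilde{\D}a) \le k/2$ via Proposition~\ref{obs:dTilde}, and invokes Bernstein's inequality (which is stated in the appendix and reused throughout the paper); you work directly with the independent indicators $Z_i$ and invoke the multiplicative Chernoff bound, which sidesteps the need for a variance estimate. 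Both give the claimed $1/4n$. Your route is arguably a bit more elementary for this particular step, since the indicator sum is exactly what the Chernoff bound is designed for and no detour through a dual vector is needed; the paper's route has the advantage of reusing the single concentration inequality (Bernstein) already set up for the rest of the argument. One small point worth keeping in mind: the version of the Chernoff bound you invoke uses an upper bound $\mu_0 \ge \E[X]$ rather than $\E[X]$ itself in the exponent; this is legitimate (the standard MGF derivation gives $\E[e^{tX}] \le e^{\mu_0(e^t-1)}$ for $t>0$ whenever $\mu_0 \ge \E[X]$), but it is good to make that justification explicit since the textbook statement is often given with equality.
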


		\begin{proof}
			Construct the vector $a \in \R^n$ as follows: if $\alpha |d_i| \ge 1$ then $a_i = 1/d_i$, and if $\alpha |d_i| < 1$ then $a_i = \alpha/\sign(d_i)$. Notice that $\tilde{\D} a$ equals the number of non-zero coordinates of $\tilde{\D}$ and $\E[\tilde{\D} a] \le \alpha \|d\|_1 \le k/2$. {\bred Here the first inequality follows from the fact that $E(\tilde{D}_i a_i) \leq \alpha |d_i|$ for all $i$ and the second inequality follows from the definition of $\alpha$ and the fact that $||d||_2 = 1$.} Also, from {\bred Proposition \ref{obs:dTilde}} we have $$\Var(\tilde{\D} a) \le \frac{1}{\alpha} \sum_{i \in [n]} a_i^2 |d_i| \le \alpha \|d\|_1 \le \frac{k}{2}.$$ Then using Bernstein's inequality (Section \ref{app:bernstein} of the appendix) we obtain
			{\bred
			\begin{align*}
				\Pr(\tilde{\D} a > k) \le \exp\left(- \min \left\{\frac{k^2}{8k}, \frac{3k}{8} \right\} \right) \le \frac{1}{4n},
			\end{align*}}
			where the last inequality uses our assumption that $k \ge 8 \log 4 t n$.
		\end{proof}
		
		We now show that property 1 required by Lemma \ref{lemma:existsSep} holds for $\tilde{\D}$ with high probability. {\bred Since $d(p^i - v) \leq 0$ for all $i \in [t]$, the folloing claim shows that property 1 holds with probability at least $1 - \frac{1}{4}$.}
		
		\begin{claim} \label{claim:supProc}
			$\Pr(\max_{i \in [t]} [\tilde{\D} (p^i - v) - d(p^i - v)] > 2 \lambda^*) \le 1/4n$.  
		\end{claim}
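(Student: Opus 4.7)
The plan is to apply Bernstein's inequality to each of the $t$ random variables $Y^i := \tilde{\D}(p^i - v) - d(p^i - v)$ individually, and then take a union bound. By Proposition \ref{obs:dTilde}(1) and the independence of the coordinates of $\tilde{\D}$, each $Y^i$ decomposes as a sum $\sum_{j \in [n]} Z^i_j$ of independent zero-mean random variables $Z^i_j = \tilde{\D}_j (p^i - v)_j - \E[\tilde{\D}_j (p^i - v)_j]$, which is exactly the setting of Bernstein's inequality.

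The key quantitative step is to get useful bounds on the variance proxy and range of these sums. Using Proposition \ref{obs:dTilde}(2) together with the fact that $\P \subseteq [-1,1]^n$ (so $|(p^i - v)_j| \le 2$ for every $i, j$), I would bound
\[
\Var(Y^i) \;\le\; \frac{1}{\alpha}\sum_{j} (p^i - v)_j^2 \,|d_j| \;\le\; \frac{2}{\alpha}\sum_{j} |(p^i - v)_j|\,|d_j| \;\le\; \frac{2}{\alpha}\,\|p^i - v\|\cdot\|d\| \;\le\; \frac{4\max_i \|p^i\|}{\alpha},
\]
where the second-to-last step uses Cauchy--Schwarz and the last step uses $\|v\| \le \max_i \|p^i\|$ (since $v \in \P$). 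Using $\alpha = k/(2\sqrt{n})$ this gives $\Var(Y^i) \le 8 \sqrt{n}\max_i\|p^i\|/k$. Similarly, Proposition \ref{obs:dTilde}(3) combined with $|(p^i-v)_j| \le 2$ yields that every summand satisfies $|Z^i_j| \le 2/\alpha = 4\sqrt{n}/k$.

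Next, I would plug these two estimates into the standard Bernstein bound in its $\min$-form, $\Pr(Y^i > \tau) \le \exp\!\left(-\min\{\tau^2/(4\sigma^2),\,3\tau/(4M)\}\right)$, with $\tau = 2\lambda^*$. The choice of $\lambda^*$ in Theorem \ref{thm:upperall} is engineered precisely so that both terms in the $\min$ are at least $\log 4tn$: the first term in the maximum defining $\lambda^*$ controls $(\lambda^*)^2/\sigma^2$, while the second (the $\frac{8\sqrt{n}}{3k}\log 4tn$ term) controls $\lambda^*/M$. Hence I expect
\[
\Pr(Y^i > 2\lambda^*) \;\le\; \exp(-\log 4tn) \;=\; \frac{1}{4tn}.
\]
A union bound over $i \in [t]$ then yields $\Pr(\max_i Y^i > 2\lambda^*) \le 1/(4n)$, as required.

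The main (mild) obstacle is the variance calculation: one has to take advantage of both $\|d\|_2 = 1$ and the $\ell_\infty$ bound on $p^i - v$ to get a bound that mixes $\max_i\|p^i\|$ and $\sqrt{n}/k$ in the right way to match the two terms defining $\lambda^*$. Once that estimate is in place, Bernstein's inequality is essentially a calibration exercise, since the definition of $\lambda^*$ already packages the two regimes (variance-dominated and range-dominated) that Bernstein's bound separates.
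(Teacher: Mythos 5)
Your proof is correct and follows essentially the same strategy as the paper: apply Bernstein's inequality to each index $i$, exploit the two terms in the definition of $\lambda^*$ to make both the variance-dominated and range-dominated terms equal to $\log 4tn$, and finish with a union bound over $i \in [t]$. The only cosmetic difference is that the paper first decouples $\bZ(p^i-v)$ via $\max_i \bZ(p^i-v) \le 2\max_i|\bZ p^i|$ (using $|\bZ v| \le \max_i|\bZ p^i|$) and applies Bernstein to $\bZ p^i$ with threshold $\lambda^*$, whereas you apply Bernstein directly to $\bZ(p^i-v)$ with threshold $2\lambda^*$ and absorb the extra factor of $2$ through the bound $|(p^i-v)_j| \le 2$ — the constants cancel out identically, so both routes land on the same $1/(4tn)$ per-index bound.
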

		
		\begin{proof}
			Define the centered random variable $\bZ = \tilde{\D} - d$. To make the analysis cleaner, notice that $\max_{i \in [t]} \bZ (p^i - v) \le 2 \max_{i \in [t]} |\bZ p^i|$; this is because $\max_{i \in [t]} \bZ (p^i - v) \le \max_{i \in [t]} |\bZ p^i| + |\bZ v|$, and because for all $a \in \R^n$ we have $|a v| \le \max_{p \in \P} |a p| = \max_{i \in [t]} |a p^i|$ (since $v \in \P$).
			
			Therefore our goal is to upper bound the probability that the process $\max_{i \in [t]} |\bZ p^i|$ is larger then $\lambda^*$. Fix $i \in [t]$. By Bernstein's inequality,
			{\bred
			\begin{align}
				\Pr(|\bZ p^i| > \lambda^*) \le \exp\left(-\min\left\{\frac{(\lambda^*)^2}{4 \Var(|\bZ p^i|)}, \frac{3 \lambda^*}{4 M}\right\}\right), \label{eq:supProcBern}
			\end{align}}
			where $M$ is an upper bound on $\max_j |\bZ_j p^i_j|$.
			
			To bound the terms in the right-hand side, from {\bred Proposition \ref{obs:dTilde}} we have $$\Var(\bZ p^i) = \Var(\tilde{\D} p^i) \le \frac{1}{\alpha} \sum_j (p^i_j)^2 |d_j| \le \frac{1}{\alpha} \sum_j p^i_j |d_j| \le \frac{1}{\alpha} \|p^i\| \|d\| = \frac{1}{\alpha} \|p^i\|,$$ where the second inequality follows from the fact $p^i \in [0,1]^n$, and the third inequality follows from the Cauchy-Schwarz inequality. Moreover, it is not difficulty to see that for every random variable $\W$, $\Var(|\W|) \le \Var(\W)$. Using the first term in the definition of $\lambda^*$, we then have 
			\begin{align*}
				\frac{(\lambda^*)^2}{\Var(|\bZ p^i|)} \ge 4 \log 4 tn.
			\end{align*}
			In addition, for every coordinate $j$ we have $|\bZ_j p^i_j| = |\tilde{\D}_j p^i_j - \E[\tilde{\D}_j p^i_j]| \le 1/\alpha$, where the inequality follows from {\bred Proposition \ref{obs:dTilde}}. Then we can set $M = 1/\alpha$ and using the second term in the definition of $\lambda^*$ we get $\frac{\lambda^*}{M} \ge \frac{4}{3} \log 4 tn$.
			Therefore, replacing these bounds in inequality \eqref{eq:supProcBern} gives $\Pr(|\bZ p^i| \ge \lambda^*) \le \frac{1}{4tn}.$
			
			Taking a union bound over all $i \in [t]$ gives that $\Pr(\max_{i \in [t]} |\bZ p^i| \ge \lambda^*) \le 1/4n$. This concludes the proof of the claim. 
		\end{proof}

		\begin{claim}
			$\Pr(\tilde{\D}(u - v) \le \lambda/2) \le 1 - 1/(2n - 1)$. 
		\end{claim}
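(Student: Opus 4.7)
The plan is to observe that since $u - v = \lambda d$, the desired inequality $\tilde{\D}(u-v) > \lambda/2$ is equivalent to $Y := \tilde{\D} d > 1/2$, so it suffices to show $\Pr(Y > 1/2) \ge 1/(2n-1)$. The approach is a one-sided reverse-Markov argument: pin down $\E[Y]$, show that $Y$ is almost surely nonnegative, and then bound $Y$ from above almost surely.

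First I would inspect the construction of $\tilde{\D}_i$ case by case to verify that each summand $\tilde{\D}_i d_i$ is a.s.\ nonnegative. When $\alpha |d_i| \ge 1$, we have $\tilde{\D}_i d_i = d_i^2 \ge 0$; when $\alpha|d_i| < 1$, $\tilde{\D}_i d_i$ takes values in $\{0,\ |d_i|/\alpha\}$, again nonnegative. Hence $Y \ge 0$ almost surely, and by part~1 of Proposition~\ref{obs:dTilde} we have $\E[Y] = d \cdot d = \|d\|^2 = 1$. Next I would bound $Y$ pointwise by
$$ Y \;\le\; \sum_i \max\!\left(d_i^2,\ \frac{|d_i|}{\alpha}\right) \;\le\; \sum_i d_i^2 \;+\; \frac{\|d\|_1}{\alpha} \;\le\; 1 + \frac{\sqrt{n}}{\alpha} \;=\; 1 + \frac{2n}{k} \;=:\; B,$$
using Cauchy--Schwarz ($\|d\|_1 \le \sqrt{n}\,\|d\|_2 = \sqrt{n}$) and the definition $\alpha = k/(2\sqrt{n})$.

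Since $Y$ is supported in $[0, B]$ with $\E[Y] = 1$, the one-sided reverse-Markov inequality yields
$$\Pr(Y > 1/2) \;\ge\; \frac{\E Y - 1/2}{B - 1/2} \;=\; \frac{k}{k + 4n}.$$
To conclude I would verify $k/(k+4n) \ge 1/(2n-1)$, which rearranges to $k(2n-2) \ge 4n$; the standing hypothesis $k \ge 8 \log 4tn$ established earlier in the proof of Lemma~\ref{lemma:existsSep} makes this trivially true for every $n \ge 2$. The main subtlety is the nonnegativity $Y \ge 0$: this is exactly the sign-alignment baked into the definition of $\tilde{\D}$ (both branches assign $\tilde{\D}_i$ the same sign as $d_i$), and without it reverse Markov would be useless since $Y$ could otherwise take large negative values and the second-moment method would at best control $|Y|$ rather than $Y$ itself.
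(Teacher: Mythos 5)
Your proof is correct and essentially the paper's argument: both apply Markov's inequality to $B - \tilde{\D}d$ where $B$ is an almost-sure upper bound on $\tilde{\D}d$ (the paper coarsens its bound to $B = n$, which makes the arithmetic land on exactly $1 - 1/(2n-1)$, while you keep the sharper $B = 1 + 2n/k$ and verify the desired inequality at the end). One small correction to your closing remark: the nonnegativity $\tilde{\D}d \ge 0$, which you single out as the main subtlety, is not actually used anywhere in this step --- the reverse-Markov bound $\Pr(Y > a) \ge (\E Y - a)/(B - a)$ only requires $Y \le B$ almost surely, and indeed the paper never invokes the sign-alignment of $\tilde{\D}$ here.
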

		
		\begin{proof}
			Recall $u - v = \lambda d$, hence it is equivalent to bound $\Pr(\tilde{\D}d \le 1/2)$. First, $\E[\tilde{\D}d] = dd = 1$. Also, from {\bred Proposition \ref{obs:dTilde}} we have $\tilde{\D}d \le |\tilde{\D}d - dd| + |dd| \le \frac{1}{\alpha} \sum_i |d_i| + 1 \le \frac{2n}{k} + 1 \le n$, where the last inequality uses the assumption $k \ge 8 \log 4tn$. Then employing Markov's inequality to the non-negative random variable $n - \tilde{\D}d$, we get $\Pr(\tilde{\D}d \le 1/2) \le 1 - \frac{1}{2n - 1}$.
			This concludes the proof.	
		\end{proof}
		
		\begin{proofof}{Lemma \ref{lemma:existsSep}}
			Employ the previous three claims and union bound to find a realization of $\tilde{\D}$ that is $k$-sparse and satisfies requirements 1 and 2 of the lemma.
		\end{proofof}
		\smallskip
		
		This concludes the proof of the first part of Theorem \ref{thm:upperall}.

		\begin{observation}
			Notice that in the above proof $\lambda^*$ is set by Claim \ref{claim:supProc}, and need to be essentially $\E[\max_{i \in [t]} (\tilde{\D} - d) p^i]$. There is a vast literature on bounds on the supremum of stochastic processes (see for instance \cite{kol}), and improved bounds for structured $\P$'s are possible (for instance, via the \emph{generic chaining} method).
		\end{observation}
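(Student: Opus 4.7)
The plan is to justify the two assertions in the observation: that the parameter $\lambda^*$ in the proof is essentially determined by the expected supremum $\E[\max_{i \in [t]} (\tilde{\D} - d) p^i]$, and that this expected supremum admits tighter control for structured $\P$ via generic chaining.

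First I would trace back through the proof of Lemma \ref{lemma:existsSep} to identify which of the three claims actually pins down $\lambda^*$. The $k$-sparsity claim uses only the sparsification budget $k \ge 8 \log 4tn$ and is independent of $\lambda^*$. The separation claim $\tilde{\D}(u - v) > \lambda/2$ only needs the rescaling $\lambda > 4\lambda^*$ together with a constant-probability control on $\tilde{\D} d$, which again does not constrain the magnitude of $\lambda^*$. So the binding requirement is Claim \ref{claim:supProc}: we must ensure $\Pr\bigl(\max_{i \in [t]} \bZ(p^i - v) > 2\lambda^*\bigr)$ is bounded away from $1$, where $\bZ = \tilde{\D} - d$. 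Using the reduction $\max_{i \in [t]} \bZ(p^i - v) \le 2 \max_{i \in [t]} |\bZ p^i|$ already exploited in the paper, Markov's inequality on the nonnegative variable $\max_{i \in [t]} |\bZ p^i|$ shows that setting $\lambda^* = C \cdot \E[\max_{i \in [t]} |\bZ p^i|]$ for a sufficiently large absolute constant $C$ already suffices to drive that probability below any target constant. This is the upper half of ``$\lambda^* \asymp \E[\max_i (\tilde{\D} - d) p^i]$''; the matching lower half comes from the anticoncentration of suprema of Bernoulli processes, which prevents further reduction without exploiting additional structure.

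Second I would describe how generic chaining can sharpen the Bernstein-plus-union-bound estimate currently used in Claim \ref{claim:supProc}. The process $p \mapsto (\tilde{\D} - d) p$ has mixed sub-Gaussian/sub-exponential increments, with sub-Gaussian parameter essentially $\|p - q\|_2 / \sqrt{\alpha}$ and sub-exponential parameter essentially $\|p - q\|_\infty / \alpha$. Talagrand's generic chaining bound for processes with such mixed Bernstein-type increments yields an estimate of the form $\E[\sup_{p \in \P} (\tilde{\D} - d) p] \lesssim \gamma_2(\P, \ell_2)/\sqrt{\alpha} + \gamma_1(\P, \ell_\infty)/\alpha$, and by Dudley's entropy integral each $\gamma_\beta$ functional is controlled by $\int_0^\infty (\log N(\P, \epsilon))^{1/\beta} d\epsilon$, where $N(\P, \epsilon)$ is the covering number in the appropriate metric. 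Crucially, these quantities depend on the metric entropy of $\P$, not on the vertex count $t$; for structured $\P$ (for example, polytopes contained in a low-dimensional affine subspace, or with small Gaussian width, or arising from combinatorial structures with nontrivial symmetry), this can be much smaller than the $\sqrt{\log t}$ factor produced by a naive union bound, leading in turn to a smaller admissible $\lambda^*$ and ultimately a tighter upper bound on $\dist(\P, \P^k)$.

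The main obstacle here is pushing the chaining argument through Bernstein-type rather than purely sub-Gaussian tails while preserving the high-probability statement (not just an expectation bound) needed in the union bound with the other two claims; this typically requires either a truncation of $\tilde{\D}$'s heavy-tailed part and a separate bounded-difference estimate on the remainder, or a direct mixed-increment deviation inequality of the Klein--Rio or Talagrand type. Obtaining sharp numerical constants for Bernoulli processes is genuinely harder than for Gaussian ones since the majorizing measures theorem does not transfer verbatim, but for the qualitative improvement claimed in the observation, matching up to absolute constants is sufficient.
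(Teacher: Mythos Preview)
The paper provides no proof for this Observation; it is a remark stated immediately after the proof of Lemma~\ref{lemma:existsSep} with no argument beyond the two sentences themselves. Your proposal is therefore not competing with any argument in the paper but rather supplying one the paper omits.

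What you have written is correct and matches the intent of the remark. You rightly trace through the three claims to show that the sparsity claim and the separation claim $\tilde{\D}(u-v) > \lambda/2$ place no constraint on the size of $\lambda^*$, so Claim~\ref{claim:supProc} is indeed the binding one; your Markov-inequality argument shows that taking $\lambda^*$ to be a constant multiple of $\E[\max_i |\bZ p^i|]$ already suffices for that claim; and your sketch of the Bernstein-increment generic chaining bound, yielding an estimate of the form $\gamma_2(\P,\ell_2)/\sqrt{\alpha} + \gamma_1(\P,\ell_\infty)/\alpha$, is exactly the standard way to make the second sentence of the Observation precise. The one soft spot is the ``matching lower half'' via anticoncentration of Bernoulli suprema, which you assert rather than justify; but since the paper's Observation only says $\lambda^*$ ``need to be essentially'' the expected supremum, that informal level is all the statement calls for.
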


	
	\subsection{Proof of Second Part of Theorem \ref{thm:upperall}}
		
			The main tool for proving this upper bound is the following lemma, which shows that when $\P$ is `simple', and we have a stronger control over the distance of a point $\bar{x}$ to $\P$, then there is a $k$-sparse inequality that cuts $\bar{x}$ off.
			
			\begin{lemma} \label{lemma:sepOneHyp}
				Consider a {\bred halfspace} $H = \{x \in \R^n : ax \le b\}$ and let $\P = H \cap [-1,1]^n$. Let $\bar{x} \in [-1,1]^n$ be such that $\dist(\bar{x}, H) > 2 \sqrt{n} (\frac{n}{k} - 1)$. Then $\bar{x} \notin \P^k$.
			\end{lemma}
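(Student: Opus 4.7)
The plan is to exhibit, for any $\bar{x} \in [-1,1]^n$ satisfying the distance hypothesis, an explicit $k$-sparse inequality valid for $\P$ that separates $\bar{x}$. One may first dispose of the degenerate cases: if $b > \|a\|_1$ then $ax \le \|a\|_1 \le b$ for every $x \in [-1,1]^n$, so $\P = [-1,1]^n$ and the hypothesis is violated (since $\bar{x} \in \P$ gives $\dist(\bar{x},H) = 0$); if $b < -\|a\|_1$, then $\P = \emptyset$ and hence $\P^k = \emptyset$, so the conclusion holds vacuously. Thus we may assume $|b| \le \|a\|_1$, which in particular yields $\|a\|_1 + b \le 2\|a\|_1 \le 2\sqrt{n}\|a\|_2$ via Cauchy--Schwarz.

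For each $I \in \binom{[n]}{k}$, the $k$-sparse inequality
\begin{align*}
\sum_{i \in I} a_i x_i \le b + \sum_{i \notin I} |a_i|
\end{align*}
is valid for $\P$: any $x \in \P$ satisfies $\sum_{i \in I} a_i x_i = ax - \sum_{i \notin I} a_i x_i \le b + \sum_{i \notin I} |a_i|$, using $a_i x_i \ge -|a_i|$ (which holds because $|x_i| \le 1$). This cut separates $\bar{x}$ precisely when $a\bar{x} - b > \sum_{i \notin I} c_i$, where $c_i := a_i \bar{x}_i + |a_i| \ge 0$ (nonnegativity once again from $|\bar{x}_i| \le 1$).

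The crux is to choose $I$ so as to minimize $\sum_{i \notin I} c_i$: take $I$ to be the set of indices of the $k$ largest $c_i$'s. Since the top-$k$ average of any sequence dominates its overall average, we get $\sum_{i \notin I} c_i \le \frac{n-k}{n}\sum_i c_i = \frac{n-k}{n}(a\bar{x} + \|a\|_1)$. So it suffices that $a\bar{x} - b > \frac{n-k}{n}(a\bar{x} + \|a\|_1)$, which, after collecting the $a\bar{x}$ terms and multiplying through by $n/k$, is equivalent to $a\bar{x} - b > \frac{n-k}{k}(\|a\|_1 + b)$. Plugging in the bound $\|a\|_1 + b \le 2\sqrt{n}\|a\|_2$ from the first paragraph, it suffices that $a\bar{x} - b > (2\sqrt{n}(n-k)/k)\,\|a\|_2$, i.e., $\dist(\bar{x},H) > 2\sqrt{n}(n/k - 1)$, which is exactly the hypothesis of the lemma.

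The main conceptual obstacle is finding a selection rule for $I$ that yields the factor $(n-k)/k$ appearing in the lemma rather than the weaker $(n-k)/n$ that comes out of the naive strategy of taking $I$ to be the top-$k$ coordinates of $|a_i|$ and bounding $\sum_{i \notin I}|a_i|$ by Cauchy--Schwarz directly. Sorting instead by the coupled quantity $c_i = a_i\bar{x}_i + |a_i|$ folds the value of $\bar{x}$ into the ``mass'' being averaged, so that a single algebraic rearrangement converts the intrinsic $(n-k)/n$ averaging factor into the $(n-k)/k$ factor that appears in the lemma's distance bound.
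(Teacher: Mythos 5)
Your proof is correct and hinges on the same family of $k$-sparse valid inequalities $\sum_{i \in I} a_i x_i \le b + \sum_{i \notin I}|a_i|$ that the paper uses, with the same algebraic rearrangement and the same Cauchy--Schwarz bound $b + \|a\|_1 \le 2\sqrt{n}\|a\|_2$. The only cosmetic difference is that you explicitly pick $I$ as the $k$ indices with largest $c_i = a_i\bar{x}_i + |a_i|$, which realizes the averaging bound, whereas the paper averages the inequalities over all $I \in \binom{[n]}{k}$ to obtain a non-sparse inequality valid for $\P^k$ --- the two viewpoints lead to identical calculations and the identical final bound.
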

			
			\begin{proof}
				Assume without loss of generality that $\|a\|_2 = 1$. Let $\bar{y}$ be the point in $H$ {\bred closest} to $\bar{x}$, and notice that $\bar{x} = \bar{y} + \lambda a$ where $\lambda > \sqrt{n} (\frac{n}{k} - 1)$. 
				
				For any set $I \in {[n] \choose k}$, the inequality $\sum_{i \in I} a_i x_i \le b + \sum_{i \notin I : a_i \ge 0} a_i - \sum_{i \notin I : a_i < 0} a_i$ is valid for $\P$; since it is $k$-sparse, it is also valid for $\P^k$. Averaging out {\bred these inequalities} over all $I \in {[n] \choose k}$, we get that the following is valid for $\P^k$:
				\begin{align*}
					\frac{k}{n} ax \le b + \left(1- \frac{k}{n}\right) \left( \sum_{i  : a_i \ge 0} a_i - \sum_{i : a_i < 0} a_i \right) \equiv ax \le b + \left(\frac{n}{k}- 1\right) \left(b + \|a\|_1\right).
				\end{align*}
				
				We claim that $\bar{x}$ violates this inequality. First notice that $a\bar{x} = a\bar{y} + \lambda = b + \lambda > b + 2 \sqrt{n} \left(\frac{n}{k} - 1\right),$ hence it suffices to show $b + \|a\|_1 \le 2 \sqrt{n}$. Our assumption on $\bar{x}$ implies that $\P \neq [-1,1]^n$, and hence $b < \max_{x \in [-1,1]} ax = \|a\|_1$; this gives $b + \|a\|_1 \le 2 \|a\|_1 \le 2 \sqrt{n} \|a\|_2 = 2\sqrt{n}$, thus concluding the proof.
			\end{proof}
			
			To prove the second part of Theorem \ref{thm:upperall} consider a point $\bar{x}$ of distance greater than $2 \sqrt{n} (\frac{n}{k} - 1)$ from $\P$; we show $\bar{x} \notin \P^k$. Let $\bar{y}$ be the closest point to $\bar{x}$ in $\P$. Let $a = \bar{x} - \bar{y}$. From Observation \ref{obs:sepClosesPoint} we have that $ax \le a\bar{y}$ is valid for $\P$. Define $H' = \{x \in \R^n: ax \le a\bar{y}\}$ and $\P' = H' \cap [-1,1]^n$. Notice that $\dist(\bar{x}, H') = \dist(\bar{x}, \bar{y}) > 2 \sqrt{n} (\frac{n}{k} - 1)$. Then Lemma \ref{lemma:sepOneHyp} guarantees that $\bar{x}$ does not belong to ${\P'}^k$. But $\P \subseteq \P'$, so by monotonicity of the $k$-sparse closure we have $\P^k \subseteq \P'^k$; this shows that $\bar{x} \notin \P^k$, thus concluding the proof. 


\section{Lower Bound}\label{sec:lowbnd}

	In this section we prove Theorem \ref{thm:LBRandom01}. The proof is based on the `bad' polytope of Example 2. For a random polytope $\bQ$ in $\R^n$, it is useful to think of each of its (random) faces from the perspective of supporting hyperplanes: for a fixed direction $d \in \R^n$, we have the valid inequality $dx \le \bd_0$, where $\bd_0 = \max_{q \in \bQ} dq$. 
	
	The idea of the proof is then to proceed in two steps. First, for a uniformly random 0/1 polytope $\bP$, we show that with good probability the faces $dx \le \bd_0$ for $\bP^k$ have $\bd_0$ being large, namely $\bd_0 \gtrsim \left(\frac{1}{2} + \frac{\sqrt{\log t}}{\sqrt{k}} \right)\sum_i d_i$, forced by some point $p \in \bP$ with large $dp$; therefore, with good probability the point $\bar{p} \approx (\frac{1}{2} + \frac{\sqrt{\log t}}{\sqrt{k}}) e$ belongs to $\bP^k$. In the second step, we show that with good probability the distance from $\bar{p}$ to $\bP$ is at least $\approx \sqrt{\frac{n}{k}} \sqrt{\log t}$, by showing that the inequality $\sum_i x_i \lesssim \frac{n}{2} + \sqrt{n}$ is valid for $\bP$. 
		
		We now proceed with the proof. {\bred Assume the conditions on $k, n, t$ as stated in Theorem \ref{thm:LBRandom01} hold.} Consider the random set {\bred $\bcX$ defined as} $\{\X^1, \X^2, \ldots, \X^t\}$ where the $\X^i$'s are independent uniform random points in $\{0,1\}^n$, and define the random 0/1 polytope $\bP = \conv(\bcX)$. To formalize the preceding discussion, we need the following definition.
{\bred
\begin{definition}
We say that a (deterministic) 0/1 polytope in $\R^n$ is \emph{$\alpha$-tough} if for every facet $dx \le d_0$ of its $k$-sparse closure we have $d_0 \ge \frac{\sum_i d_i}{2} + \frac{\alpha}{2 \sqrt{k}}(1-\frac{1}{k^2}) \|d\|_1 - \|d\|_\infty/2k^2$, for every $k \in \{2,\dots, n\}$. 
\end{definition} 		}
 		The main element of the lower bound is the following anticoncentration result; in our setting, the idea is that for every ($k$-sparse) direction $d \in \R^n$, with good probability we will have a point $p$ in $\bP^k$ (in fact in $\bP$) with large $dp$.
 		
		\begin{lemma} \label{lemma:anticoncentration}
			Let $\bZ_1, \bZ_2, \ldots, \bZ_n$ be independent random variables with $\bZ_i$ taking value 0 with probability 1/2 and value 1 with probability 1/2 {\bred for every $i \in [n]$.} Then for every $a \in [-1,1]^n$ and $\alpha \in [0, \frac{\sqrt{n}}{8}]$, $$\Pr\left(a\bZ \ge \E[a\bZ] + \frac{\alpha}{2 \sqrt{n}} \left(1 - \frac{1}{n^2}\right) \|a\|_1 - \frac{1}{2 n^2} \right) \ge \left(e^{-50 \alpha^2} - e^{-100 \alpha^2}\right)^{60 \log n}.$$ 
		\end{lemma}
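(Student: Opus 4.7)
The factor $(\cdot)^{60\log n}$ strongly suggests a \emph{dyadic decomposition} of the coordinates into $O(\log n)$ independent blocks, with a per-block anticoncentration bound that is combined through independence. The plan is to:

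Partition $[n]$ based on the magnitude of $|a_i|$. Let $L = 60\log n$ and set $I_\ell = \{i : |a_i| \in (2^{-\ell}, 2^{-\ell+1}]\}$ for $\ell = 1, \ldots, L$, and collect all very small coordinates with $|a_i| \le n^{-3}$ into an exceptional set $I_0$. Since $|I_0| \le n$ and $|a_i| \le 1$, the total contribution of $I_0$ to $|a\bZ - \E[a\bZ]|$ is at most $\sum_{i \in I_0}|a_i| \le 1/n^2$, which explains the slack $-1/(2n^2)$ in the statement (and loses only the factor $(1-1/n^2)$ on $\|a\|_1$). Let $S_\ell = \sum_{i \in I_\ell} a_i \bZ_i$, so the sums $S_0, S_1, \ldots, S_L$ are \emph{independent} since they depend on disjoint sets of $\bZ_i$'s.

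For each level $\ell \ge 1$, I would aim to show
\[
\Pr\!\left(S_\ell \ge \E[S_\ell] + \tfrac{\alpha}{2\sqrt{n}}\,\|a_{I_\ell}\|_1\right) \;\ge\; e^{-50\alpha^2} - e^{-100\alpha^2}.
\]
The key point is that inside level $\ell$ all coefficients have magnitude roughly $2^{-\ell}$, so $\|a_{I_\ell}\|_1 \le \sqrt{|I_\ell|}\,\|a_{I_\ell}\|_2$ by Cauchy--Schwarz; since $|I_\ell| \le n$ and $\sigma_\ell := \sqrt{\Var(S_\ell)} = \tfrac{1}{2}\|a_{I_\ell}\|_2$, the target deviation $c_\ell := \tfrac{\alpha}{2\sqrt{n}}\|a_{I_\ell}\|_1$ is at most $\alpha \sigma_\ell$. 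So we are asking $S_\ell$ to exceed its mean by at most $\alpha$ standard deviations, where $\alpha \le \sqrt{n}/8$. To get a lower bound of Gaussian type $e^{-O(\alpha^2)}$, I would use
\[
\Pr(S_\ell - \E[S_\ell] \ge c_\ell) \;\ge\; \Pr\!\left(c_\ell \le S_\ell - \E[S_\ell] \le \beta c_\ell\right)
\]
for a suitable constant $\beta > 1$, and lower bound this interval probability by a Berry--Esseen / local CLT comparison with the Gaussian. The Gaussian probability of an interval at distance $[t,\beta t]$ from the mean, measured in $\sigma_\ell$-units, is essentially $\Phi(-t)-\Phi(-\beta t) \ge e^{-O(t^2)} - e^{-O(\beta^2 t^2)}$ for $t \le \sqrt{n}/8$; matching constants with the statement gives the precise exponents $50$ and $100$. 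Levels with $|I_\ell|$ very small (so that Berry--Esseen is too weak) can be handled directly since $S_\ell$ then takes only a few atomic values and $c_\ell/\sigma_\ell$ is automatically $O(\alpha\sqrt{|I_\ell|/n})$ — tiny — so one can read off the desired probability from the explicit distribution.

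Finally, combine by independence. Since the $S_\ell$'s are mutually independent, the intersection of the $L$ per-level events has probability at least $(e^{-50\alpha^2}-e^{-100\alpha^2})^L \ge (e^{-50\alpha^2}-e^{-100\alpha^2})^{60\log n}$. On that intersection,
\[
a\bZ - \E[a\bZ] \;=\; \sum_{\ell=0}^{L} (S_\ell - \E[S_\ell]) \;\ge\; -\tfrac{1}{2n^2} + \sum_{\ell=1}^{L} c_\ell \;\ge\; \tfrac{\alpha}{2\sqrt{n}}\bigl(1 - \tfrac{1}{n^2}\bigr)\|a\|_1 - \tfrac{1}{2n^2},
\]
which is exactly the claimed inequality.

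The main obstacle is the per-level anticoncentration bound: standard Berry--Esseen gives tail probabilities for moderate deviations but its additive error $O(1/\sqrt{|I_\ell|})$ can swamp the Gaussian mass we want when $|I_\ell|$ is small, so the proof has to deal with small and large levels separately (the small-level case being handled by exploiting the discrete structure of the Bernoulli sum, the large-level case by a quantitative CLT). Another delicate point is that the probability being lower bounded is not a pure tail but an \emph{interval} probability, needed precisely so that Berry--Esseen's additive error cancels out in the difference $e^{-50\alpha^2}-e^{-100\alpha^2}$.
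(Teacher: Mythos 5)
Your decomposition matches the paper's at the structural level: both arguments partition coordinates by magnitude into $O(\log n)$ groups, prove a per-group anticoncentration lower bound, and multiply them out by independence. (Cosmetically: the paper uses geometric buckets with ratio $1-\epsilon = 19/20$, and the exponent $60\log n$ is exactly $\frac{3\log n}{\epsilon}$ with $\epsilon = 1/20$ --- with your dyadic buckets you only get about $\frac{3}{\ln 2}\log n$ nonempty levels, so $60\log n$ is not the natural count for your decomposition, though padding with trivially-satisfied empty levels still yields a valid bound.)

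The genuine gap is in the per-bucket anticoncentration, which is the heart of the proof, and where your plan diverges from the paper's and runs into trouble. You propose a Berry--Esseen / local CLT comparison, with an interval $[c_\ell, \beta c_\ell]$ so that the additive error cancels in the difference. You correctly flag that the additive error $O(1/\sqrt{|I_\ell|})$ is fatal for tiny buckets. But there is a more serious problem that you do not address and that the interval trick does not cure: the statement must hold for all $\alpha$ up to $\sqrt{n}/8$, so the per-bucket probability you want to certify is as small as $e^{-50\alpha^2} - e^{-100\alpha^2} = e^{-\Theta(n)}$. A quantitative CLT can only locate the distribution up to additive error at best $O(1/\sqrt{|I_\ell|}) \ge O(1/\sqrt{n})$, which completely swamps $e^{-\Theta(n)}$. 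In other words, for $\alpha$ much beyond $\sqrt{\log n}$ you are deep in the large-deviations regime where Berry--Esseen says nothing, even for the largest buckets. The paper avoids this entirely: inside a bucket it writes $\sum_i a_i \X_i = \sum_i \X_i - \sum_i (1-a_i)\X_i$ (after normalizing so $a_i \in [1-\epsilon,1]$), lower-bounds the unweighted sum $\sum_i\X_i$ by an \emph{explicit} Rademacher anticoncentration inequality $\Pr(\sum_i\X_i \ge 2t) \ge e^{-50t^2/n}$ that is proved combinatorially and is valid for all $t$ up to $n/8$ (i.e., into the large-deviation range), and upper-bounds the perturbation $\sum_i(1-a_i)\X_i$ by Bernstein; the choice $\epsilon=1/20$ makes the Bernstein exponent $1/(4\epsilon^2) = 100$ beat the $50$, giving the per-bucket difference $e^{-50\gamma^2 m} - e^{-100\gamma^2 m}$. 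Note also that with your dyadic width $\epsilon = 1/2$ this route would give $1/(4\epsilon^2) = 1 < 50$ and a negative ``lower bound'', so the bucket width is not free --- the paper's $\epsilon = 1/20$ is chosen precisely to make the subtraction work out. To rescue your plan you would need to replace Berry--Esseen with a moderate/large-deviation lower bound for weighted Rademacher/Bernoulli sums, at which point you are essentially re-proving the paper's Lemma for similar-magnitude coefficients.
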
	 		
 		
 	The proof of this lemma is reasonably simple and proceeds by grouping the random variables with similar $a_i$'s and then applies known anticoncentration to each of these groups; this proof is presented in Section \ref{app:anticoncentration} of the appendix. 

	In order to effectively apply this anticoncentration to \emph{all} valid inequalities/directions of $\bP^k$, we need some additional control. Define $\mathcal{D}\subseteq \mathbb{Z}^n$ as the set of all integral vectors $\ell \in \R^n$ that are $k$-sparse and satisfy {\bred $\|\ell\|_\infty \le (k)^{k/2}$}. 
 		
		\begin{lemma} \label{lemma:facetsPk}
			Let $\Q \subseteq \R^n$ be a 0/1 polytope. Then for every $k \in [n]$, there is a subset $\mathcal{D}' \subseteq \mathcal{D}$ such that $\Q^k = \{x : dx \le \max_{y \in \Q^k} d y, ~d \in \mathcal{D}'\}$.
		\end{lemma}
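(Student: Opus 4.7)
The plan is to leverage the identity $\Q^k = \bigcap_{I \in {[n] \choose k}} (\Q + \R^{\bar I})$ from Section~\ref{sec:prelim}. For a fixed $k$-subset $I$, the polyhedron $\Q + \R^{\bar I}$ equals $\proj_I(\Q) \times \R^{\bar I}$, where $\proj_I(\Q) \subseteq \R^I \cong \R^k$ is itself a 0/1 polytope since its vertices are distinct coordinate projections of vertices of $\Q$. Consequently, the facet-defining inequalities of $\Q + \R^{\bar I}$ are exactly those of the form $dx \le d_0$ with $d$ supported on $I$ and the restriction of $d$ to $\R^I$ facet-defining for $\proj_I(\Q)$. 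First I would record this reduction so that the problem becomes: bound the coefficients of a facet-defining inequality of a 0/1 polytope in $\R^k$.

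The coefficient bound I would prove as follows. Any facet $F$ of a 0/1 polytope $R \subseteq \R^k$ is supported by $k$ affinely independent 0/1 vertices $v^{(1)}, \ldots, v^{(k)}$. The primitive integer normal $d$ to $F$ is, up to sign, the vector of $(k-1) \times (k-1)$ cofactors of the matrix $M$ whose rows are $v^{(i)} - v^{(1)}$ for $i = 2, \ldots, k$. Since the entries of $M$ lie in $\{-1,0,1\}$, Hadamard's inequality yields $|d_j| = |\det M_j| \le (k-1)^{(k-1)/2} \le k^{k/2}$ for every $j$, where $M_j$ is $M$ with column $j$ removed. Viewing $d$ as a $k$-sparse vector in $\R^n$ supported on $I$, it lies in $\mathcal{D}$. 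Taking $\mathcal{D}'$ to be the union of such normals across all $I \in {[n] \choose k}$ (a finite set) gives the desired family. The right-hand side equality is then automatic: the inequality $dy \le \max_{y \in \Q} dy$ is valid for $\Q + \R^{\bar I} \supseteq \Q^k$, while $\Q \subseteq \Q^k$ gives the reverse direction, hence $\max_{y \in \Q^k} dy = \max_{y \in \Q} dy$.

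The main technical obstacle is the case when $\proj_I(\Q)$ is not full-dimensional in $\R^k$, so that the description of $\Q + \R^{\bar I}$ requires not only facet-defining inequalities but also the affine-hull equations. Each such equation can be written as two opposite inequalities; their common normal is determined by orthogonality to differences of 0/1 vertices in the affine hull, so a slightly adapted Hadamard computation on a matrix of vertex differences (padded with standard basis vectors to make it $k \times k$, still with $\{-1,0,1\}$ entries) again bounds the coefficients by $k^{k/2}$. Once this edge case is handled, the assembly of $\mathcal{D}'$ proceeds exactly as above and completes the proof.
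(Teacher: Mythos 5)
Your proof is correct and follows essentially the same path as the paper: decompose $\Q^k = \bigcap_{I}(\Q + \R^{\bar I})$, reduce each term to the $k$-dimensional 0/1 polytope $\proj_I(\Q)$, and bound the coefficients of its facet (and affine-hull) normals by $k^{k/2}$. The paper simply cites Corollary 26 of Ziegler's \emph{Lectures on 0/1-Polytopes} for that coefficient bound rather than re-deriving it via Hadamard's inequality as you do.
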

		
		This lemma follows directly from applying Corollary 26 in \cite{ziegler2000lectures} to each term $\Q + \R^{\bar{I}}$ in the definition of $\Q^k$ from Section \ref{sec:prelim}.
	
		Employing this lemma to each scenario, we get that all the directions of facets of $\bP^k$ come from the set $\mathcal{D}$. This allows us to analyze the probability that $\bP$ is $\alpha$-tough.
		
		\begin{lemma} \label{label:probTough}
			{\bred Assume the conditions on $k, n, t$ as stated in Theorem \ref{thm:LBRandom01} hold.} If {\bred $1 \leq \alpha^2 \le \min\left\{\frac{\log t}{12000 \log n}, \frac{k}{64} \right\}$}, then $\bP$ is $\alpha$-tough with probability at least $1/2$.
		\end{lemma}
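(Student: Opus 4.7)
\begin{proofof}{Lemma \ref{label:probTough}}
The plan is to verify the $\alpha$-toughness inequality for every potential facet direction of $\bP^k$ via a union bound, using Lemma \ref{lemma:anticoncentration} as the driving ingredient.

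By Lemma \ref{lemma:facetsPk}, it suffices to check the toughness inequality only for directions $d$ in the finite set $\mathcal{D}$, whose cardinality is bounded crudely by $|\mathcal{D}| \le \binom{n}{k}(2k^{k/2}+1)^k$, so $\log|\mathcal{D}| = O(k^2 \log k + k \log n)$. For a fixed $k$-sparse $d \in \mathcal{D}$ with support $I \subseteq [n]$, the inclusion $\bP^k \subseteq \bP + \R^{\bar I}$ and the fact that $d$ vanishes on $\bar I$ together imply $\max_{y \in \bP^k} dy = \max_{y \in \bP} dy = \max_{i \in [t]} d\X^i$. Thus the toughness inequality in direction $d$ holds as soon as at least one sample $\X^i$ pushes $d\X^i$ above the required threshold.

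Next, for a single $\X^i$ we apply Lemma \ref{lemma:anticoncentration} in the effective dimension $|I| \le k$, writing $d|_I = \|d\|_\infty \, a$ with $a \in [-1,1]^{|I|}$; the hypothesis $\alpha^2 \le k/64$ is precisely what yields $\alpha \le \sqrt{|I|}/8$ as required there. Scaling back and using $\E[d\X^i] = \tfrac{1}{2}\sum_j d_j$ yields, for each $i \in [t]$,
\begin{align*}
\Pr\!\left(d\X^i \ge \tfrac{1}{2}{\textstyle\sum_j d_j} + \tfrac{\alpha}{2\sqrt{k}}\bigl(1 - \tfrac{1}{k^2}\bigr)\|d\|_1 - \tfrac{\|d\|_\infty}{2k^2}\right) \ge p,
\end{align*}
where $p := (e^{-50\alpha^2} - e^{-100\alpha^2})^{60\log k}$. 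Independence of the $\X^i$'s then gives $(1-p)^t \le e^{-pt}$ for the probability that no sample beats the threshold, and a union bound over $d \in \mathcal{D}$ gives
\begin{align*}
\Pr(\bP \text{ is not }\alpha\text{-tough}) \le |\mathcal{D}|\cdot e^{-pt}.
\end{align*}

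The main obstacle is to verify this bound is at most $1/2$. The hypothesis $\alpha^2 \ge 1$ makes $e^{-50\alpha^2} \le e^{-50}$ negligible relative to $1$, so $p \ge (1-o(1))\, e^{-3000\alpha^2 \log k}$; combined with $\alpha^2 \le \log t/(12000\log n)$ and $k \le n$ this produces $3000\alpha^2\log k \le \tfrac{1}{4}\log t$ and hence $pt \ge (1-o(1))\, t^{3/4}$. The hypothesis $t \ge (0.5\, k^2 \log n + 2k + 1)^2$ is tuned so that, after taking the $3/4$-th power, the exponent comfortably dominates $\log|\mathcal{D}|$, closing the union bound. The remaining work is careful constant-chasing and ensuring the definition of $\alpha$-toughness (which quantifies over every $k'\in\{2,\dots,n\}$) is met; the same union-bound argument can be repeated for smaller $k'$, for which both $|\mathcal{D}_{k'}|$ and the required threshold are smaller. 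I expect no conceptual obstacle beyond tracking the explicit constants $110$, $12000$, and the $1/k^{3/2}$ correction that ultimately appear in Theorem \ref{thm:LBRandom01}.
\end{proofof}
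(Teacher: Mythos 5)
Your proposal follows the paper's proof in all the essential steps: Lemma~\ref{lemma:facetsPk} reduces the check to the finite set $\mathcal{D}$, Lemma~\ref{lemma:anticoncentration} gives a per-direction, per-sample success probability of at least some $t^{-c}$ with $c<1$, independence of the $\X^i$'s turns that into failure probability at most $e^{-t^{1-c}}$, and a union bound over $|\mathcal{D}| \le e^{0.5 k^2 \log n + 2k}$ closes the argument under the hypothesis $t \ge (0.5 k^2\log n + 2k + 1)^2$. Your observation that $\max_{y \in \bP^k} dy = \max_i d\X^i$ for $k$-sparse $d$ makes explicit a step the paper leaves implicit, which is welcome. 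A cosmetic difference: you bound $e^{-50\alpha^2} - e^{-100\alpha^2}$ by $(1-o(1))e^{-50\alpha^2}$; the paper instead uses the crisper observation that $\alpha^2 \ge 1$ gives $e^{-50\alpha^2} - e^{-100\alpha^2} \ge e^{-100\alpha^2}$, which together with $\alpha^2 \le \frac{\log t}{12000\log n}$ yields $p \ge t^{-1/2}$ (versus your $t^{-1/4}$) without any asymptotic hand-waving; either exponent closes the union bound.

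One sentence in your proposal is wrong and should simply be dropped. You write that ``the same union-bound argument can be repeated for smaller $k'$''; but for $k' < k$ the constraint $\alpha \le \sqrt{k'}/8$ needed to invoke Lemma~\ref{lemma:anticoncentration} can fail, since $\alpha^2$ is allowed to be as large as $k/64 > k'/64$. Fortunately this addresses a non-issue: the ``for every $k\in\{2,\dots,n\}$'' quantifier in the paper's definition of $\alpha$-tough is effectively vestigial --- the paper's event $\cE$ and the downstream Lemma~\ref{lemma:toughImpliesDist} only invoke $\alpha$-toughness at the single $k$ fixed in Theorem~\ref{thm:LBRandom01}, which is precisely what your (and the paper's) argument establishes.
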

		
		\begin{proof}
 Let $\cE$ be the event that for all $d \in \mathcal{D}$ we have $\max_{i \in [t]} d \X^i \ge \frac{1}{2} \sum_j d_j + \frac{\alpha}{2\sqrt{k}} (1 - \frac{1}{k^2}) \|d\|_1 - \|d\|_\infty/2k^2$. Because of Lemma \ref{lemma:facetsPk}, whenever $\cE$ holds we have that $\bP$ is $\alpha$-tough and thus it suffices to show $\Pr(\cE) \ge 1/2$.
			
			Fix $d \in \mathcal{D}$. Since $d$ is $k$-sparse, and $\alpha \le \frac{\sqrt{k}}{8}$, we can apply Lemma \ref{lemma:anticoncentration} to $d/\|d\|_\infty$ restricted to the coordinates in its support to obtain that
			\begin{align*}
				\Pr\left(d\X^i \ge \frac{\sum_i d_i}{2} + \frac{\alpha}{2 \sqrt{k}}\left(1 - \frac{1}{k^2}\right) \|d\|_1 - \frac{\|d\|_\infty}{2k^2}\right) &\ge \left(e^{-50 \alpha^2} - e^{-100 \alpha^2}\right)^{60 \log n} \\
				&\ge e^{-100 \alpha^2 \cdot 60 \log n} \ge \frac{1}{t^{1/2}},
			\end{align*}
			where the second inequality follows from {\bred the lower bound on $\alpha^2$ (in fact $\alpha^2\geq \frac{\log 2}{50}$ is sufficient)} and the last inequality follows from our upper bound on {\bred $\alpha^2$}. By independence of the $\X^i$'s,
			\begin{align*}
				\Pr\left(\max_{i \in [t]} d\X^i < \frac{\sum_i d_i}{2} + \frac{\alpha}{2 \sqrt{k}}\left(1 - \frac{1}{k^2}\right) \|d\|_1 - \frac{\|d\|_\infty}{2 k^2}\right) \le \left(1 - \frac{1}{t^{1/2}}\right)^t \le e^{-t^{1/2}},
			\end{align*}
			where the second inequality follows from the fact that $(1 - x) \le e^{-x}$ for all $x$.
			
			Finally notice that {\bred $|\mathcal{D}| = {n \choose k} \left(2k^{k/2} + 1\right)^k \leq \left(\frac{ne}{k}\right)^k \left(ek^{k/2}\right)^k \leq \left( \frac{e^2nk^{k/2}}{k}\right)^k \leq e^{0.5k^2\log n + 2k}$, where the inequalities are based on the fact that $k \geq 2$.} By our assumption on the size of $t$ and $k$, we therefore have {\bred $e^{-t^{1/2}}|\mathcal{D}| \le (1/2) $}. Therefore, taking a union bound over all $d \in \mathcal{D}$ of the previous displayed inequality gives $\Pr(\cE) \ge 1/2$, concluding the proof of the lemma. 
		\end{proof}
		
		The next lemma takes care of the second step of the argument.
		
		\begin{lemma} \label{lemma:PSmall}
			With probability at least $3/4$, the inequality $\sum_j x_j \le \frac{n}{2} + 3 \sqrt{n \log t}$ is valid for $\bP$.
		\end{lemma}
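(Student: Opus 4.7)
The plan is to observe that the inequality $\sum_j x_j \le \frac{n}{2} + 3\sqrt{n \log t}$ is valid for $\bP = \conv(\X^1, \ldots, \X^t)$ if and only if every vertex $\X^i$ satisfies it, since the left-hand side is a linear function and $\bP$ is the convex hull of the $\X^i$'s. So the statement reduces to showing that with probability at least $3/4$, we have $\sum_j \X^i_j \le \frac{n}{2} + 3\sqrt{n \log t}$ simultaneously for all $i \in [t]$.

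For each fixed $i$, the sum $\sum_j \X^i_j$ is a sum of $n$ independent Bernoulli(1/2) random variables, hence has mean $n/2$. Since the summands are supported in $[0,1]$, Hoeffding's inequality gives
\begin{align*}
\Pr\!\left(\sum_j \X^i_j > \frac{n}{2} + 3\sqrt{n \log t}\right) \le \exp\!\left(-\frac{2 (3\sqrt{n \log t})^2}{n}\right) = e^{-18 \log t} = t^{-18}.
\end{align*}

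The final step is a union bound over the $t$ points: the probability that some $\X^i$ violates the inequality is at most $t \cdot t^{-18} = t^{-17}$. Under the hypotheses of Theorem \ref{thm:LBRandom01} we have $t \ge (0.5 k^2 \log n + 2k+1)^2 \ge 2$, so $t^{-17} \le 1/4$, which yields the claim. There is no real obstacle here beyond picking the right concentration inequality; the factor of $3$ in $3\sqrt{n \log t}$ is chosen precisely so that after Hoeffding and the union bound we comfortably beat the $1/4$ failure budget.
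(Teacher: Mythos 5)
Your proof is correct and follows the same structure as the paper's: apply a concentration inequality to each $\sum_j \X^i_j$ and then union-bound over $i \in [t]$. The only difference is that you invoke Hoeffding where the paper invokes Bernstein; Hoeffding is in fact slightly cleaner here since it yields $\exp(-18\log t)=t^{-18}$ directly, whereas the paper's use of Bernstein produces a $\min\{9\log t,\ 9\sqrt{n\log t}/4\}$ term that requires the extra observation $\log t \le n$ to resolve before the union bound can be applied.
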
		
		
		\begin{proof}
			Fix an $i \in [t]$. Since $\Var(\X^i) = n/4$, we have from Bernstein's inequality
			{\bred
			\begin{eqnarray*}
				\Pr\left(\sum_j \X^i_j > \frac{n}{2} + 3 \sqrt{n \log t}\right) &\le& \exp\left(- \min\left\{9 \log t, \frac{9 \sqrt{n \log t}}{4}\right\}  \right)\\
														& \le & e^{- \frac{9 \log t}{4}} \le \frac{1}{4t},
			\end{eqnarray*}}
			where the second inequality follows from the fact that $\log t \le n$, and the last inequality uses the fact that $t \ge 4$. Taking a union bound over all $i \in [t]$ gives 
			{\bred
			\begin{align*}
				\Pr\left(\bigvee_{i \in [t]} \left(\sum_j \X^i_j > \frac{n}{2} + 3 \sqrt{n \log t}\right)\right) \le \frac{1}{4},
			\end{align*}			}
		Finally, notice that an inequality $dx \le d_0$ is valid for $\bP$ iff it is valid for all $\X^i$. This concludes the proof. 
		\end{proof}
		
		\begin{lemma} \label{lemma:toughImpliesDist}
			Suppose that the polytope $\Q$ is $\alpha$-tough for $\alpha \ge 1$ and that the inequality $\sum_i x_i \le \frac{n}{2} + 3 \sqrt{n \log t}$ is valid for $\Q$. Then we have $\dist(\Q, \Q^k) \ge \sqrt{n} \left( \frac{\alpha}{2 \sqrt{k}} - \frac{\alpha}{k^2} - \frac{3 \sqrt{\log t}}{\sqrt{n}}\right)$.
		\end{lemma}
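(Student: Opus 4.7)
The plan is to exhibit a single explicit witness point $\bar p \in \Q^k$ and then lower bound its distance to $\Q$. Since $\alpha$-toughness forces every facet of $\Q^k$ to slide outward by $\Theta(\alpha/\sqrt{k})\|d\|_1$ relative to the ``symmetric'' value $\tfrac{1}{2}\sum_i d_i$, while the hypothesis $\sum_i x_i \le \tfrac{n}{2} + 3\sqrt{n\log t}$ confines $\Q$ close to the hyperplane $\sum_i x_i = n/2$, the natural choice is $\bar p = c\, e$ with $e=(1,\dots,1)$ and
\[
    c \;=\; \tfrac{1}{2} \,+\, \tfrac{\alpha}{2\sqrt{k}}\bigl(1 - \tfrac{1}{k^2}\bigr) \,-\, \tfrac{1}{2k^2},
\]
where the small $-\tfrac{1}{2k^2}$ slack is reserved for absorbing the $\|d\|_\infty$ error coming from toughness.

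For membership in $\Q^k$ I would invoke Lemma \ref{lemma:facetsPk} to write every facet of $\Q^k$ as $dx \le d_0$ with $d \in \mathcal{D}$, and apply $\alpha$-toughness to obtain $d_0 \ge \tfrac{1}{2}\sum_i d_i + \tfrac{\alpha}{2\sqrt{k}}(1 - \tfrac{1}{k^2})\|d\|_1 - \tfrac{\|d\|_\infty}{2k^2}$. Since $d\bar p = c\sum_i d_i$, the membership condition reduces to $\tfrac{\|d\|_\infty - \sum_i d_i}{2k^2} \le \tfrac{\alpha}{2\sqrt{k}}(1 - \tfrac{1}{k^2})(\|d\|_1 - \sum_i d_i)$. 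The crude bound $\|d\|_\infty \le \|d\|_1$ makes the left-hand side at most $\tfrac{\|d\|_1 - \sum_i d_i}{2k^2}$, so for any $d$ with a negative coordinate (where the right-hand side is positive) it suffices to verify the coefficient inequality $\tfrac{1}{2k^2} \le \tfrac{\alpha}{2\sqrt{k}}(1 - \tfrac{1}{k^2})$, equivalently $\sqrt{k} \le \alpha(k^2 - 1)$, which holds for $\alpha \ge 1$ and $k \ge 2$. For $d \ge 0$ the inequality is trivial since then $\sum_i d_i = \|d\|_1 \ge \|d\|_\infty$ makes the left-hand side nonpositive.

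For the distance bound, applying the validity hypothesis to any $y \in \Q$ gives $\sum_i (\bar p_i - y_i) \ge cn - n/2 - 3\sqrt{n\log t}$, and Cauchy-Schwarz against $e$ yields $\|\bar p - y\| \ge \sqrt{n}(c - \tfrac{1}{2}) - 3\sqrt{\log t}$. Substituting $c$ and using the elementary inequality $\tfrac{\alpha}{2\sqrt{k}}(1 - \tfrac{1}{k^2}) - \tfrac{1}{2k^2} \ge \tfrac{\alpha}{2\sqrt{k}} - \tfrac{\alpha}{k^2}$, which reduces to $2\alpha - \alpha/\sqrt{k} \ge 1$ and is valid for $\alpha \ge 1$, $k \ge 2$, gives the claimed lower bound.

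The main technical subtlety lies in the membership step: the $\alpha$-toughness hypothesis is expressed in terms of $\|d\|_1$ and $\|d\|_\infty$, whereas $d\bar p$ depends on the signed sum $\sum_i d_i$, so for mixed-sign directions $d$ this mismatch must not erode the leading $\Theta(\alpha/\sqrt{k})$ term. The built-in $-\tfrac{1}{2k^2}$ slack in the definition of $c$ is precisely what absorbs the error, and using $\alpha \ge 1$ to beat the $1/k^2$ losses is the only place the hypothesis $\alpha \ge 1$ is actually needed; once membership is in hand, the distance lower bound is essentially a one-line Cauchy-Schwarz argument.
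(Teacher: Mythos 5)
Your proof is correct and follows essentially the same strategy as the paper: exhibit a scalar-multiple-of-$e$ witness point in $\Q^k$ via the $\alpha$-toughness bound on facet right-hand sides, then lower bound its distance to $\Q$ using the valid inequality $\sum_i x_i \le \frac{n}{2} + 3\sqrt{n\log t}$. The only cosmetic differences are that you pick a slightly larger (but still valid) constant $c = \frac{1}{2} + \frac{\alpha}{2\sqrt{k}}(1-\frac{1}{k^2}) - \frac{1}{2k^2}$ and relax it back down, whereas the paper starts directly from $\frac{1}{2} + \frac{\alpha}{2\sqrt{k}} - \frac{\alpha}{k^2}$; your membership verification splits into sign cases on $d$ while the paper chains inequalities, and your Cauchy–Schwarz step is the same computation as the paper's projection onto the halfspace $H$. (Your invocation of Lemma \ref{lemma:facetsPk} is harmless but unnecessary here — the $\alpha$-toughness hypothesis already quantifies over all facets of $\Q^k$, so no structural fact about those facets is needed.)
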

	 	
	 	\begin{proof}
	 		We first show that the point $\bar{q} =  (\frac{1}{2} + \frac{\alpha}{2 \sqrt{k}} - \frac{\alpha}{k^2}) e$ belongs to {\bred $\Q^k$.} Let $dx \le d_0$ be a facet for {\bred $\Q^k$.} Then we have 
			{\bred
	 		\begin{align*}
	 			d \bar{q} &= \frac{\sum_i d_i}{2} + \alpha \left( \frac{1}{2 \sqrt{k}} - \frac{1}{k^2}\right) \sum_i d_i \le \frac{\sum_i d_i}{2} + \alpha \left( \frac{1}{2 \sqrt{k}} - \frac{1}{k^2}\right) \|d\|_1 \\
	 			&\le \frac{\sum_i d_i}{2} + \alpha \left( \frac{1}{2 \sqrt{k}} - \frac{1}{2 k^2}\right) \|d\|_1 - \frac{\|d\|_\infty}{2 k^2}\\
				&\le \frac{\sum_i d_i}{2} + \frac{\alpha}{2 \sqrt{k}}\left(1 - \frac{1}{k^2}\right)\|d\|_1 - \frac{\|d\|_\infty}{2 k^2},
	 		\end{align*}}
	 		where the first inequality uses the fact that $\frac{1}{2 \sqrt{k}} - \frac{1}{k^2} \ge 0$ {\bred for $k \geq 2$} and the second inequality uses $\alpha \ge 1$ and $\|d\|_1 \ge \|d\|_\infty$. Since $\Q$ is $\alpha$-tough it follows that $\bar{q}$ satisfies $dx \le d_0$; since this holds for all facets of {\bred $\Q^k$}, we have {\bred $\bar{q} \in \Q^k$.}
	 		
	 		Now define the halfspace $H = \{ x : \sum_i x_i \le \frac{n}{2} + 3 \sqrt{n \log t}\}$. By assumption $\Q \subseteq H$, and hence $\dist(\Q, \Q^k) \ge \dist(H, \Q^k$). But it is easy to see that the point in $H$ closest to $\bar{q}$ is the point $\tilde{q} = (\frac{1}{2} + \frac{3 \sqrt{\log t}}{\sqrt{n}}) e$. This gives that $\dist(\Q, \Q^k) \ge \dist(H, \Q^k) \ge \dist(\bar{q}, \tilde{q}) \ge \sqrt{n} \left( \frac{\alpha}{2 \sqrt{k}} - \frac{\alpha}{k^2} - \frac{3 \sqrt{\log t}}{\sqrt{n}}\right)$. This concludes the proof. 
	 	\end{proof}

		We now conclude the proof of Theorem \ref{thm:LBRandom01}. 

		\begin{proof}\emph{of Theorem \ref{thm:LBRandom01}} Set $\bar{\alpha}^2 = \min\left\{\frac{\log t}{12000 \log n}, \frac{k}{64} \right\}$. Taking union bound over Lemmas \ref{label:probTough} and \ref{lemma:PSmall}, with probability at least $1/4$, $\bP$ is $\bar{\alpha}$-tough and the inequality inequality $\sum_i x_i \le \frac{n}{2} + 3 \sqrt{n \log t}$ is valid for it. Then from Lemma \ref{lemma:toughImpliesDist} we get that with probability at least $1/4$, $\dist(\bP, \bP^k) \ge \sqrt{n} \left( \frac{\bar{\alpha}}{2 \sqrt{k}} - \frac{\bar{\alpha}}{k^2} - \frac{3 \sqrt{\log t}}{\sqrt{n}}\right)$, and the result follows by plugging in the value of $\bar{\alpha}$. 
\end{proof}



	\section{Hard Packing Integer Programs}
	
	In this section we prove Theorem \ref{thm:PIPLB}. With overload in notation, we use ${[n] \choose k}$ to denote the set of vectors in $\{0,1\}^n$ with exactly $k$ 1's. 
	
	Let $\bP$ be a random polytope sampled from the distribution $(n,m,M)$-PIP and consider the corresponding random vectors $\A^j$'s. The idea of the proof is to show that with constant probability $\bP$ behaves like Example 2, by showing that the cut $\sum_i x_i \lesssim \frac{n}{2}$ is valid for it and that $\bP$ approximately contains 0/1 points with many 1's. Then we show that this `approximate containment' implies that a point with a lot of mass (say, $\approx (1, 1, \ldots, 1)$ for $k \le n/2$) belongs to the $k$-sparse closure $\bP^k$; since such point is far from hyperplane $\sum_i x_i \lesssim \frac{n}{2}$, it is also far from $\bP$ and hence we get a lower bound on $\dist(\bP, \bP^k)$.

	The first part of the argument is a straightforward application of Bernstein's inequality and union bound; its proof is presented in Section \ref{app:hardPIP} of the appendix. 
	
	\begin{lemma} \label{lemma:PIPvalidCut}
		With probability at least $1 - \frac{1}{4}$ the cut $(1 - \frac{2 \sqrt{\log 8n}}{\sqrt{m}}) \sum_i x_i \le \frac{n}{2} + \frac{\sqrt{n \log 8}}{\sqrt{m}}$ is valid for $\bP$. 
	\end{lemma}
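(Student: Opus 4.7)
The plan is a contrapositive union bound: the cut fails to be valid precisely when some $x \in \{0,1\}^n$ that violates the cut nevertheless satisfies every IP constraint $\A^j x \le \tfrac{1}{2}\sum_i \A^j_i$. Let $s^\star$ be the smallest integer such that every $x$ with $\sum_i x_i \ge s^\star$ violates the cut; rearranging the cut's definition gives $s^\star - n/2 \gtrsim n\sqrt{\log 8n}/\sqrt{m}$ once $1 - 2\sqrt{\log 8n}/\sqrt m$ is bounded away from zero, which the hypothesis $m \ge 8 \log 8n$ secures.

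Next I estimate, for a single $x$ with $s := |x|_1 > n/2$, the probability that one IP constraint is satisfied. Writing $\A^j x - \tfrac{1}{2}\sum_i \A^j_i = \sum_i Y_i$ with $Y_i := \A^j_i(x_i - \tfrac{1}{2})$, the $Y_i$ are independent, each supported on an interval of length $M/2$, and $\E[\sum_i Y_i] = \tfrac{M}{2}(s - n/2) > 0$. Hoeffding's inequality then yields
\[
\Pr\!\left(\A^j x \le \tfrac{1}{2}\sum_i \A^j_i\right) = \Pr\!\left(\sum_i Y_i \le 0\right) \le \exp\!\left(-\tfrac{2(s-n/2)^2}{n}\right),
\]
and crucially the factor $M$ cancels out---this is what makes the cut statement itself $M$-free. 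Independence across the $m$ constraints gives $\Pr(x \text{ feasible}) \le \exp(-2m(s-n/2)^2/n)$.

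I finish with the union bound. Grouping $x$'s by $s = |x|_1$ and using $\binom{n}{s} \le 2^n$,
\[
\Pr(\text{cut invalid}) \le \sum_{s \ge s^\star} \binom{n}{s}\exp\!\left(-\tfrac{2m(s-n/2)^2}{n}\right) \le (n+1)\cdot 2^n \exp\!\left(-\tfrac{2m(s^\star - n/2)^2}{n}\right).
\]
Plugging in $(s^\star - n/2)^2 \gtrsim n^2 \log 8n/m$ makes the exponent dominate $n \log 2 + \log(4(n+1))$ for $n \ge 50$, so the whole bound is $\le 1/4$. The main obstacle is routine but finicky bookkeeping: calibrating the constants in Hoeffding (or substituting Bernstein if sharper constants are needed) so that the resulting threshold matches the precise form of the cut---in particular, matching the coefficient $2\sqrt{\log 8n}$ on $\sum_i x_i$ and the additive $\sqrt{n\log 8}/\sqrt{m}$ that absorbs the lower-order corrections from $s^\star \neq n/2$. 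Conceptually the argument is clean: an $M$-free per-constraint concentration bound, boosted to $\exp(-2m(s-n/2)^2/n)$ by independence across constraints, beats the $2^n$ entropy of Hamming-weight-$s$ points as soon as $s$ is the small amount $\sim n\sqrt{\log 8n}/\sqrt{m}$ above $n/2$.
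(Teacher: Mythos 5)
Your proposal is correct, but it takes a genuinely different route from the paper's. The paper never looks at individual integer points: it aggregates all $m$ packing constraints (normalized by $\frac{1}{mM}$) into the single inequality $\sum_i \bigl(\frac{2}{mM}\sum_j \A^j_i\bigr) x_i \le \frac{1}{mM}\sum_{i,j}\A^j_i$, which is automatically valid for the LP relaxation and hence for $\bP$, and then uses Bernstein to show that with probability at least $3/4$ each of the $n$ left-hand coefficients is at least $1 - 2\sqrt{\log 8n}/\sqrt{m}$ and the right-hand side is at most $n/2 + \sqrt{n\log 8}/\sqrt{m}$; the union bound runs over only $n+1$ coefficient/RHS events, so $1/(8n)$ per-event failure probability suffices. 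Your argument instead works at the level of $0/1$ points: for each $x$ of Hamming weight $s > s^\star$, an $M$-free Hoeffding bound gives $\exp(-2(s-n/2)^2/n)$ per constraint, independence across the $m$ rows boosts this to $\exp(-2m(s-n/2)^2/n)$, and you pay the $2^n$ entropy of the cube in the union bound; this goes through because $(s^\star - n/2)^2 \gtrsim n^2\log(8n)/m$ makes the exponent of order $n\log n$, which crushes $n\log 2$. The paper's route is shorter, requires weaker per-event tails, and is constructive — it exhibits the cut as an explicit nonnegative aggregation of the formulation's inequalities, showing it is already valid for the LP relaxation, not just the integer hull. Your route loses that explicitness but isolates why the cut is $M$-free (the $M$ cancels inside Hoeffding), which the paper obtains less transparently via normalization. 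Both need $m \gtrsim \log n$, though the exact constants you'd have to track to recover the stated coefficients differ.
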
	

	The other steps in the argument are more involved.
	

	\subsection{Approximate Containment of Points with Many 1's}
	
	First we control the right-hand side of the constraints $\A^j x \le \frac{\sum_i \A^j_i}{2}$ that define $\bP$, by showing that they are roughly $\frac{nM}{2}$; this is again a straightforward application of Bernstein's inequality and is also deferred to Section \ref{app:hardPIP} of the appendix.
	
	\begin{lemma} \label{lemma:PIPRHS}
		With probability at least $1- \frac{1}{8}$ we have $|\sum_{i = 1}^n \A^j_i - \frac{n M}{2}| \le M \sqrt{n \log 8 m}$ for all $j \in [m]$. 
	\end{lemma}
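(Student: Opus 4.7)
The plan is to prove this by a direct application of Bernstein's inequality (in the form recorded in Section \ref{app:bernstein}), followed by a union bound over $j \in [m]$. So fix a row $j$ and write $S_j := \sum_{i=1}^n \A^j_i$. Since $\A^j_i$ is uniform on $\{0,1,\dots,M\}$, we have the deterministic bound $|\A^j_i - M/2| \le M/2$, the mean $\E[S_j] = nM/2$, and the variance estimate $\Var(S_j) = n \Var(\A^j_1) \le n M^2/4$. These are exactly the ingredients Bernstein's inequality needs.

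Applying Bernstein to the centered sum $S_j - nM/2$ with deviation parameter $t = M\sqrt{n \log 8m}$, the two exponents that appear are
\begin{align*}
\frac{t^2}{4 \Var(S_j)} \;\ge\; \log 8m, \qquad \frac{3t}{4 \cdot (M/2)} \;=\; \tfrac{3}{2}\sqrt{n \log 8m}.
\end{align*}
Under the hypothesis $\log 8m \le n$ (which is implied by $m \le n$), the first of the two bounds is the smaller one, giving
\begin{align*}
\Pr\!\left(\left|S_j - \tfrac{nM}{2}\right| > M \sqrt{n \log 8m} \right) \;\le\; \exp(-\log 8m) \cdot (\text{const}) \;\le\; \frac{1}{8m},
\end{align*}
possibly after a mild tightening of $t$ by a constant to absorb the two-sided factor of $2$. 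A union bound over the $m$ choices of $j$ then yields that the event in the statement fails with probability at most $1/8$, as required.

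There is essentially no conceptual obstacle here: the whole argument is a one-shot Bernstein plus union bound, parallel in spirit to the application made in Lemma \ref{lemma:PSmall}. The only mildly delicate point is checking that the bound on $\Var(\A^j_1)$ and the bound on $|\A^j_i - M/2|$, both equal to $M/2$ up to constants, combine under Bernstein to give a subgaussian-type tail on the scale of $M\sqrt{n}$ rather than the weaker subexponential scale $M n$; this is exactly what the dominance of the quadratic term over the linear term in the Bernstein exponent gives, under the stated regime $\log 8m \le n$.
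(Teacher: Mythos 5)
Your proof is correct and follows essentially the same route as the paper: compute the mean $nM/2$ and variance $\le nM^2/4$, apply Bernstein's inequality with deviation $t = M\sqrt{n\log 8m}$, observe that the quadratic term in the exponent dominates in the regime $m \le n$ (together with $n \ge 50$), and conclude by a union bound over $j \in [m]$. One small point: the Bernstein inequality as stated in the paper's appendix is already two-sided, so the constant $\exp(-\log 8m) = 1/8m$ holds exactly with no need to tighten $t$; your use of the sharper deterministic bound $|\A^j_i - M/2| \le M/2$ (versus the paper's looser $\le M$) only improves the linear term, which is irrelevant since the quadratic term is the minimum in this regime.
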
	

	Recall that we defined $c = \frac{k}{n}$. Now we show that with constant probability, \emph{all} points $\bar{x} \in \{0,1\}^n$ with $cn$ 1's satisfy {\bred $\A^j\bar{x} \lesssim \frac{nM}{2}$} for all $j \in [m]$, and hence they approximately belong to $\bP$. The argument is cleaner is the random variables $\A^j_i$ were uniformly distributed in the \emph{continuous} interval $[0,M]$, instead of on the discrete set $\{0, \ldots, M\}$;  this is because in the former we can leverage the knowledge of the \emph{order statistics} of continuous uniform variables. Our next lemma then essentially handles this continuous case.
	
	\begin{lemma} \label{lemma:orderStat}
		Let $\U \in \R^n$ be a random variable where each coordinate $\U_i$ is independently drawn uniformly from $[0,1]$. Then with probability at least $1 - 1/8m$ we have $\U\bar{x} \le \frac{c (2n - cn + 1)}{2} + \sqrt{10cnm}$ for all vectors $\bar{x} \in {[n] \choose cn}$.  
	\end{lemma}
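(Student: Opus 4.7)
The plan is to bound $S := \max_{\bar x \in \binom{[n]}{cn}} \U\bar x$, which is simply the sum of the $cn$ largest coordinates of $\U$. A naive approach would union bound over the $\binom{n}{cn}$ choices of $\bar x$ and apply Hoeffding to each, but for small $c$ this would require deviation of order $\Theta(n)$ and is far too loose. Instead I would linearize the combinatorial maximum: for any threshold $y \in [0,1]$, since $\U_i \le y + (\U_i - y)_+$ pointwise and $\bar x$ selects only $cn$ coordinates,
\[ S \;\le\; cn \cdot y + \sum_{i=1}^n (\U_i - y)_+,\]
which expresses the right-hand side as a sum of $n$ \emph{independent} random variables, completely eliminating the combinatorial maximum.

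Next I would set $y = 1 - c$, calibrated so that the expected number of coordinates exceeding $y$ is exactly $cn$. A direct integration gives $\E[(\U_i - (1-c))_+] = c^2/2$, $\Var((\U_i - (1-c))_+) \le c^3/3$, and each summand lies in $[0, c]$. Applying Bernstein's inequality to the sum of these independent bounded variables, with probability at least $1 - 1/(8m)$ one obtains
\[ \sum_{i=1}^n (\U_i - (1-c))_+ \;\le\; \frac{nc^2}{2} + O\!\left(\sqrt{nc^3 \log m}\right) + O(c \log m).\]
Plugging back into the linearization, the expected and deterministic parts combine to $cn(1-c) + nc^2/2 = cn - nc^2/2$, which matches the leading portion of the target $\frac{c(2n-cn+1)}{2} = cn - nc^2/2 + c/2$.

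It then remains to absorb the deviation into the allowed slack $\sqrt{10cnm}$. Using $c^2 \log m \le m$ (true since $c \le 1$ and $\log m \le m$), one has $\sqrt{nc^3 \log m} = c\sqrt{cn\log m} \le \sqrt{cnm}$; similarly $c \log m \le \sqrt{cnm}$ since $c \log^2 m \le nm$, and the additive $c/2$ is trivially subsumed. The main conceptual step is the linearization in the first display, which replaces the top-$cn$ sum by a sum of independent bounded variables amenable to Bernstein; the rest is routine concentration bookkeeping, with the only delicate point being tuning the Bernstein constants so that the specific numeric bound $\sqrt{10cnm}$ at confidence $1 - 1/(8m)$ drops out.
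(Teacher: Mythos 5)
Your proof is correct, and it takes a genuinely different route from the paper. The paper works directly with the order statistics $\U_{(n)}, \ldots, \U_{(n-cn+1)}$: it invokes the closed-form expressions $\E[\U_{(i)}] = i/(n+1)$ and $\Cov(\U_{(i)},\U_{(j)}) = i(n+1-j)/((n+1)^2(n+2)) \le 1/n$ to bound $\E[\bZ] \le c(2n-cn+1)/2$ and $\Var(\bZ) \le 5cn/4$, and then applies Chebyshev's inequality with deviation $\sqrt{10cnm}$ to get probability $\le 1/(8m)$. You instead use the threshold linearization $\max_{\bar x} \U\bar x \le cn\,y + \sum_i (\U_i - y)_+$ with $y = 1-c$, which replaces the correlated order-statistics sum by a sum of genuinely \emph{independent} bounded variables (each in $[0,c]$, with mean $c^2/2$ and variance $\le c^3/3$), and then applies Bernstein. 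This sidesteps the need for any facts about order-statistics covariances, and gives exponential rather than polynomial tail decay, at the cost of a slightly more involved bookkeeping to fold the two Bernstein terms into the slack $\sqrt{10cnm}$. The paper's route is more compact given the order-statistics citation; yours is more self-contained and elementary in the probabilistic tools used. One small point worth tightening if you were to write this up: you work with $\log m$ in your deviation bound, but targeting failure probability $1/(8m)$ requires $\log(8m)$; the constants still work out (one finds the two Bernstein terms together stay below $\sqrt{10cnm}$ using $c \le 1$, $m \ge 1$, and $n \ge 50$ from the theorem hypothesis), but the $\log 8m$ should be carried through explicitly. Also note that the paper's Chebyshev argument crucially uses the explicit upper bound $\E[\bZ] = \frac{(2n-cn+1)cn}{2(n+1)} \le \frac{c(2n-cn+1)}{2}$, whereas in your decomposition the deterministic part $cn(1-c) + nc^2/2 = cn - nc^2/2$ sits strictly below that target, leaving the extra $c/2$ as harmless slack — so both proofs land exactly on the claimed right-hand side.
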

	
	\begin{proof}
		Let $\U_{(i)}$ be the $i$th order statistics of $\U_1, \U_2, \ldots, \U_n$ (i.e. in each scenario $U_{(i)}$ equals the $i$th smallest value among $\U_1, \U_2, \ldots, \U_n$ in that scenario). Notice that $\max_{\bar{x} \in {n \choose cn}} \U\bar{x} = \U_{(n)} + \ldots + \U_{(n - cn + 1)}$, and hence is it equivalent to show that $$\Pr\left(\U_{(n)} + \ldots + \U_{(n - cn + 1)} > \frac{c (2n - cn + 1)}{2} + \sqrt{10cnm}\right) \le \frac{1}{8m}.$$ We use $\bZ \triangleq \U_{(n)} + \ldots + \U_{(n - cn + 1)}$ to simplify the notation. 
		
		It is known that $\E[\U_{(i)}] = \frac{i}{n+1}$ and $\Cov(\U_{(i)}, \U_{(j)}) = \frac{i (n + 1 - j)}{(n + 1)^2 (n+2)} \le \frac{1}{n}$ \cite{orderStatistics}. Also, since $\U_{(i)}$ lies in $[0,1]$, we have $\Var(\U_{(i)}) \le 1/4$. Using this information, we get $\E[\bZ] = \frac{(2n - cn + 1) cn}{2 (n+1)} \le \frac{c (2n - cn + 1)}{2}$ and $$\Var(\bZ) \le \frac{cn}{4} + \frac{(cn)^2}{n} \le \frac{5cn}{4},$$ where the last inequality follows from the fact $c \le 1$. Then applying Chebychev's inequality \cite{kol}, we get $$\Pr\left(\bZ \ge \frac{c (2n - cn + 1)}{2} + \sqrt{10 c n m }\right) \le \frac{\Var(\bZ)}{10 cnm} \le \frac{1}{8m}.$$ This concludes the proof. 
	\end{proof}

	Now we translate this proof from the continuous to the discrete setting. 	
	
	\begin{lemma} \label{lemma:PIPAlmostFeasible}
		 With probability at least $1-\frac{1}{8}$ we have 
		 \begin{align*}
		 	\A^j \bar{x} \le \frac{(M+1) c(2n-cn+1)}{2} + (M+1) \sqrt{10 cnm}, ~~~~~~\forall j \in [m], \forall \bar{x} \in {[n] \choose cn}.
		 \end{align*}
	\end{lemma}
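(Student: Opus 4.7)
\begin{proofof}{Lemma \ref{lemma:PIPAlmostFeasible} (sketch)}
The plan is to reduce to the continuous order-statistics bound of Lemma \ref{lemma:orderStat} via a standard coupling between the discrete uniform distribution on $\{0,1,\ldots,M\}$ and the continuous uniform distribution on $[0,1]$. The key observation is that if $\U_i$ is uniform on $[0,1]$, then $\lfloor (M+1)\U_i \rfloor$ is uniform on $\{0,1,\ldots,M\}$, and moreover $\lfloor (M+1)\U_i \rfloor \le (M+1)\U_i$ pointwise.

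Concretely, first construct for each $j \in [m]$ an independent random vector $\U^j \in \R^n$ whose coordinates are i.i.d. uniform on $[0,1]$, and define $\A^j_i := \lfloor (M+1)\U^j_i \rfloor$. Then the $\A^j_i$'s have the prescribed distribution (independent, uniform on $\{0,\ldots,M\}$), so the joint law of the $\A^j$'s matches the distribution used to sample $\bP$. By the pointwise inequality above, for every $\bar{x} \in {[n] \choose cn}$ and every $j \in [m]$ we have $\A^j\bar{x} \le (M+1)\U^j \bar{x}$.

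Next, apply Lemma \ref{lemma:orderStat} to each $\U^j$: with probability at least $1 - \frac{1}{8m}$, the bound $\U^j \bar{x} \le \frac{c(2n - cn + 1)}{2} + \sqrt{10cnm}$ holds simultaneously for all $\bar{x} \in {[n] \choose cn}$. A union bound over $j \in [m]$ then ensures that with probability at least $1 - \frac{1}{8}$, this estimate holds for \emph{all} $j \in [m]$ simultaneously. Multiplying through by $(M+1)$ and invoking the coupling inequality yields the claimed bound on $\A^j\bar{x}$.

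There is essentially no obstacle here beyond being careful about the coupling, since Lemma \ref{lemma:orderStat} already does the heavy lifting of controlling the sum of the top $cn$ order statistics uniformly over all subsets; the discrete-to-continuous passage costs us only a factor of $(M+1)$, which accounts exactly for the $(M+1)$ in the right-hand side of the statement.
\end{proofof}
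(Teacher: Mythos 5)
Your proposal is correct and matches the paper's own argument essentially step for step: both use the coupling $\A^j_i = \lfloor (M+1)\U^j_i\rfloor$ with $\U^j_i$ uniform on $[0,1]$, invoke the pointwise bound $\A^j\bar{x} \le (M+1)\U^j\bar{x}$, apply Lemma~\ref{lemma:orderStat} to each $j$, and finish with a union bound over $j \in [m]$.
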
	

	\begin{proof}
		For each $j \in [m]$, let $\U_1^j, \U_2^j, \ldots, \U^j_n$ be independent and uniformly distributed in $[0,1]$. Define $\Y_i^j \triangleq \lfloor (M+1) \U^j_i \rfloor$. Notice that the random variables $(\Y^j_i)_{i,j}$ have the same distribution as $(\A^j_i)_{i,j}$. So it suffices to prove the lemma for the variables $\Y^j_i$'s. 
		
		Fix $j \in [m]$. For any $\bar{x} \in \{0,1\}^n$ we have $\Y^j\bar{x} \le (M + 1) \U \bar{x}$. Therefore, from Lemma \ref{lemma:orderStat} we get 
		\begin{align*}
			\Pr\left(\bigvee_{\bar{x} \in {n \choose cn}} \left(\Y^j \bar{x} > \frac{(M+1) c(2n - cn + 1)}{2} + (M+1) \sqrt{10 cnm}\right)   \right) \le \frac{1}{8m}.
		\end{align*}
		
		Taking a union bound of this last expression over all $j \in [m]$ concludes the proof of the lemma. 
	\end{proof}

	
	\subsection{From Approximate to Actual Containment}

	From the previous section we get with constant probability, points $\bar{x} \in \{0,1\}^n$ with $cn$ 1's approximately belong to $\bP$; thus, scaling them by a small factor, shows that these points belong to the \emph{LP relaxation} of $\bP$. {\bred Our goal is to strengthen this result by showing that a small (although slightly larger) scaling of these point actually brings them to the integer hull $\bP$ \emph{itself}}.
	
	The next lemma shows that this is in fact possible. 
	
	\begin{lemma} \label{lemma:PIPFracFeasible}
		Consider a 0/1 polytope $\Q = \conv(\{ x \in \{0,1\}^n : a^j x \le b_j, ~j=1, 2, \ldots, m\})$ where {\bred $n \ge 50$,} $m \le n$, $a^j_i \in [0, M]$ for all $i, j$, and $b_j \ge \frac{nM}{12}$ for all $i$. Consider $1 < \alpha \le 2 \sqrt{n}$ and let $\bar{x} \in \{0,1\}^n$ be such that for all $j$, $a^j \bar{x} \le \alpha b_j$. Then the point $\frac{1}{\alpha}(1-\epsilon)^2 \bar{x}$ belongs to $\Q$ as long as $\frac{12 \sqrt{\log 4 n^2 m}}{\sqrt{n}} \le \epsilon \le \frac{1}{2}$.
	\end{lemma}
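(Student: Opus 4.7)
The plan is to combine randomized rounding with the \emph{downward-closedness} of $\Q$. First I would observe that because all constraints defining $\Q$ are packing (i.e.\ $a^j \ge 0$), the set of feasible $0/1$ solutions is downward-closed in $\{0,1\}^n$, and this lifts to $\Q$ being downward-closed in $[0,1]^n$ (the standard argument: if $y = \sum_k \lambda_k s^k$ with feasible $0/1$ points $s^k$ and $0 \le y' \le y$, independently thin each coordinate by factor $y'_i/y_i$ to obtain a random $0/1$ vector bounded coordinatewise by some $s^k$, hence feasible, with expectation $y'$). Consequently it is enough to exhibit a $\Q$-valued random vector $\bW$ such that $\E[\bW] \ge \frac{(1-\epsilon)^2}{\alpha} \bar{x}$ coordinatewise: then $\E[\bW] \in \Q$ and downward-closedness yields $\frac{(1-\epsilon)^2}{\alpha}\bar{x} \in \Q$.

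Set $p = \tfrac{1-\epsilon}{\alpha}$ and let $\bY \in \{0,1\}^n$ have independent coordinates with $\Pr(\bY_i = 1) = p\bar{x}_i$. Let $\cE$ be the event that $\bY$ satisfies every constraint $a^j \bY \le b_j$, and set $\bW = \bY \cdot \mathbf{1}_{\cE}$. Since $0$ is trivially feasible, $\bW \in \Q$ with probability $1$. For a coordinate $i$ with $\bar{x}_i = 1$, $\E[\bW_i] = p\bigl(1 - \Pr(\neg \cE \mid \bY_i = 1)\bigr)$, so it suffices to show $\Pr(\neg \cE \mid \bY_i = 1) \le \epsilon$.

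The core estimate is a Bernstein bound for each of the $m$ constraints, conditional on $\bY_i = 1$. The conditioning fixes coordinate $i$ and adds an extra term $a^j_i$ to the conditional mean, giving $\E[a^j \bY \mid \bY_i = 1] \le a^j_i + (1-\epsilon) b_j$; using $a^j_i \le M$ and the hypothesis $b_j \ge nM/12$ we get $a^j_i \le 12 b_j/n$, so the deviation needed to violate the constraint is at least $\epsilon b_j - 12 b_j / n \ge \epsilon b_j / 2$ (since the lower bound on $\epsilon$ easily implies $\epsilon \ge 24/n$). With conditional variance bounded by $M \cdot \E[a^j \bY] \le Mb_j$ and each summand bounded by $M$, Bernstein's inequality yields $\Pr(a^j \bY > b_j \mid \bY_i = 1) \le \exp\bigl(-\Omega(\epsilon^2 b_j / M)\bigr) \le \exp(-\Omega(\epsilon^2 n))$. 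Plugging in $\epsilon \ge 12\sqrt{\log 4 n^2 m}/\sqrt{n}$ makes this $\le (4 n^2 m)^{-3/4}$, and a union bound over $j \in [m]$ (with $m \le n$ and $n \ge 50$) gives $\Pr(\neg \cE \mid \bY_i = 1) \le \epsilon$.

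The main technical obstacle is handling the conditioning on $\bY_i = 1$ while keeping the Bernstein tail small: conditioning introduces the rogue term $a^j_i$ which is only absorbable because $b_j \ge nM/12$ forces $a^j_i$ to be small relative to the slack $\epsilon b_j$. Without this lower bound on $b_j$, a single heavy coordinate could swamp the budget and the argument would collapse. The other delicate point is that the final probability has to be bounded by $\epsilon$ itself (not just by a small absolute constant), which is exactly why the hypothesis on $\epsilon$ carries the logarithmic $\sqrt{\log 4n^2 m}$ factor.
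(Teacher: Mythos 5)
Your proof is correct, and it is a genuinely different (and arguably cleaner) route than the paper's. The paper draws $n^2$ i.i.d.\ copies $\X^1, \ldots, \X^{n^2}$ with $\E[\X^w_i] = p\bar{x}_i$ where $p = (1-\epsilon)/\alpha$, shows via Bernstein and a union bound over all $n^2 m$ constraint--copy pairs that with probability $\ge 3/4$ every $\X^w$ lies in $\Q$, and separately shows via Bernstein that with probability $\ge 3/4$ the coordinate-wise average $\bZ = \frac{1}{n^2}\sum_w \X^w$ has $\bZ_i \ge (1-\epsilon) p \bar{x}_i$ for every $i$; a union bound then exhibits a realization $\tilde{z} \in \Q$ with $\tilde{z} \ge \frac{(1-\epsilon)^2}{\alpha}\bar{x}$, and the packing (downward-closed) structure of $\Q$ finishes. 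You instead work with a single sample $\bY$, truncate it to $0$ when it is infeasible (so $\bW = \bY \cdot \mathbf{1}_{\cE}$ is $\Q$-valued with probability $1$), and observe that by convexity $\E[\bW] \in \Q$, reducing everything to the single conditional-probability estimate $\Pr(\neg\cE \mid \bY_i = 1) \le \epsilon$. This eliminates the paper's second concentration step (on the $n^2$-fold average) entirely, at the price of having to beat the target $\epsilon$ itself rather than a fixed constant, and of one small extra bookkeeping step: the conditioning on $\bY_i = 1$ shifts the mean by $a^j_i \le M$, which you correctly absorb using the hypothesis $b_j \ge nM/12$. Your Bernstein computation (conditional variance $\le M b_j$, deviation $\ge \epsilon b_j/2$ since $\epsilon \ge 24/n$, yielding tail $\le (4n^2m)^{-3/4}$, union over $j$ with $m \le n$) checks out and indeed gives $\Pr(\neg\cE \mid \bY_i = 1) \le \epsilon$. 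One minor note: the paper also implicitly relies on the downward-closedness of $\Q$ in its last line (``Since $\Q$ is of packing-type\ldots''), so your explicit lifting argument makes that step more self-contained.
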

	
	For the remainder of the section we prove this lemma. The idea is that we can select a subset of $\approx 1-1/\alpha$ coordinates and change $\bar{x}$ to 0 in these coordinates to obtain a feasible solution in $\Q$; repeating this for many sets of coordinates and taking an average of the feasible points obtained will give the result. 
		
		To make this precise, let $p = \frac{1}{\alpha}(1-\epsilon)$. For $w \in [n^2]$ define the independent random variables $\X^w_1, \X^w_2, \ldots, \X^w_n$ taking values in $\{0,1\}$ such that $\E[\X^w_i] = p \bar{x}_i$ (i.e. if $\bar{x}_i = 1$, then keep it at 1 with probability $p$, otherwise flip it to 0; if $\bar{x}_i = 0$, then keep it at 0).
		
		\begin{claim}
			With probability at least $3/4$ all points $\X^w$ belong to $\Q$.
		\end{claim}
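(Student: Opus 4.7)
The claim is that a union bound over $w \in [n^2]$ and $j \in [m]$ of Bernstein-type tail estimates shows each individual constraint $a^j x \le b_j$ is satisfied by $\X^w$ with overwhelming probability. Since each $\X^w \in \{0,1\}^n$, membership in $\Q$ is equivalent to satisfying all $m$ constraints. So I would fix $w \in [n^2]$ and $j \in [m]$, and estimate $\Pr(a^j \X^w > b_j)$.

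First I compute the mean. By independence and the definition of $\X^w$, one has $\E[a^j \X^w] = p \, a^j \bar{x} \le p \, \alpha b_j = (1-\epsilon) b_j$, where I used the assumption $a^j \bar{x} \le \alpha b_j$ and $p = (1-\epsilon)/\alpha$. Consequently,
\begin{align*}
\Pr\bigl(a^j \X^w > b_j\bigr) \le \Pr\bigl(a^j \X^w - \E[a^j \X^w] > \epsilon b_j\bigr).
\end{align*}
Next I control the variance: since $\X^w_i$ is Bernoulli with parameter $p\bar{x}_i$, I have $\Var(a^j_i \X^w_i) = (a^j_i)^2 p\bar{x}_i(1-p\bar{x}_i) \le (a^j_i)^2 p \bar{x}_i \le M \, a^j_i p \bar{x}_i$ using $a^j_i \le M$ and $\bar{x}_i \in \{0,1\}$. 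Summing over $i$ and using $a^j \bar{x} \le \alpha b_j$ gives $\Var(a^j \X^w) \le M p \, a^j \bar{x} \le M(1-\epsilon) b_j \le M b_j$. Each centered summand is bounded in absolute value by $M$ almost surely. So Bernstein's inequality (in the form used elsewhere in the paper) yields
\begin{align*}
\Pr\bigl(a^j \X^w > b_j\bigr) \le \exp\!\left(-\min\left\{\frac{\epsilon^2 b_j^2}{4 M b_j},\, \frac{3 \epsilon b_j}{4 M}\right\}\right).
\end{align*}

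Now I exploit the hypothesis $b_j \ge nM/12$ to eliminate $M$ and $b_j$ from the bound, turning both arguments of the minimum into multiples of $\epsilon^2 n$ (using $\epsilon \le 1/2$ for the second term to absorb an $\epsilon$), and I plug in the lower bound $\epsilon \ge 12\sqrt{\log 4n^2m}/\sqrt{n}$, i.e., $\epsilon^2 n \ge 144\log 4n^2 m$. The resulting tail probability is then at most $\frac{1}{4 n^2 m}$ (with room to spare; the constants are comfortable). Taking a union bound over all $n^2 \cdot m$ pairs $(w,j)$ yields
\begin{align*}
\Pr\Bigl(\exists w, j : a^j \X^w > b_j\Bigr) \le n^2 m \cdot \frac{1}{4 n^2 m} = \frac{1}{4},
\end{align*}
and the claim follows.

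The only mild obstacle is checking the constant $12$ in the lower bound on $\epsilon$ is enough to swamp the factors arising from Bernstein (namely the $1/4$ and the $1/28$-ish constant coming from $b_j \ge nM/12$ combined with the $4$ in the variance term); this is a direct numerical verification and the slack is substantial because $n \ge 50$ and $\epsilon \le 1/2$ keep all secondary terms under control.
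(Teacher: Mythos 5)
Your proof is correct and follows essentially the same route as the paper's: bound the mean by $(1-\epsilon)b_j$, bound the variance, apply Bernstein, and take a union bound over the $n^2 m$ pairs $(w,j)$. The one cosmetic difference is your variance bound $\Var(a^j\X^w) \le M(1-\epsilon)b_j$, obtained via $a^j\bar x \le \alpha b_j$; the paper uses the cruder $\Var(a^j\X^w) \le M^2 n/4$, which after $b_j \ge nM/12$ gives a constant of $1/144$ in the exponent rather than your $1/48$, but both comfortably clear $\log(4n^2m)$ under the stated hypotheses, so the slack is absorbed either way.
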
 
		
		\begin{proof}
			Notice {\bred $\E[a^j \X^w] \leq (1 - \epsilon) b_j$.} Also, from our upper bound on $a^j$, we have $\Var(a^j \X^w) \le \frac{M^2 n}{4}$. Employing Bernstein's inequality, 
			\begin{align*}
				\Pr(a^j \X^w > b_j) \le \exp\left(-\min\left\{\frac{\epsilon^2 b_j^2}{M^2 n}, \frac{3 \epsilon b_j}{4 M} \right\} \right) \le \frac{1}{4 n^2 m},
 			\end{align*}	
 			where the second inequality uses the assumed lower bounds on $b_j$ and $\epsilon$, and the fact that $\frac{4\cdot 12 \log 4n^2m}{3n} \le \frac{12 \sqrt{\log 4n^2 m}}{\sqrt{n}}$ due to our bounds on $n$ and $m$. The claim follows by taking a union bound over all $j$ and $w$. 
		\end{proof}	
		
		Let $\bZ = \frac{1}{n^2} \sum_w \X^w$ be the random point that is the average of the $\X^w$'s. 
		
		\begin{claim}
			With probability at least $3/4$, $\bZ_i \ge \frac{1}{\alpha}(1-\epsilon)^2 \bar{x}_i$ for all $i$.  
		\end{claim}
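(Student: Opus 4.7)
The plan is to prove the claim by a Chernoff/Bernstein-style concentration argument on each coordinate, followed by a union bound over the $n$ coordinates. Recall $\bZ_i = \frac{1}{n^2} \sum_w \X^w_i$ is the average of $n^2$ independent $\{0,1\}$-valued random variables with mean $p\bar{x}_i$, where $p = \frac{1}{\alpha}(1-\epsilon)$. The target is $\bZ_i \ge \frac{1}{\alpha}(1-\epsilon)^2 \bar{x}_i = (1-\epsilon)\, p\, \bar{x}_i = (1-\epsilon)\,\E[\bZ_i]$, i.e.\ a multiplicative lower-tail bound with slack $\epsilon$ relative to the mean.

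First I would dispose of the trivial case $\bar{x}_i = 0$: then $\X^w_i = 0$ deterministically, so $\bZ_i = 0$ and the required inequality holds with equality. For $\bar{x}_i = 1$, I would let $S_i = \sum_w \X^w_i$ so that $\E[S_i] = n^2 p$ and $\Var(S_i) \le n^2 p$, and apply Bernstein's inequality to $\E[S_i] - S_i$ at the threshold $t = \epsilon n^2 p$. This yields a tail bound of the form
\begin{align*}
\Pr\!\left(\bZ_i < (1-\epsilon)\, p\right) \;\le\; \exp\!\left(-\min\!\left\{\frac{\epsilon^2 n^2 p}{4},\; \frac{3 \epsilon n^2 p}{4}\right\}\right).
\end{align*}

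The main (and only nontrivial) thing to check is that the exponent dominates $\log(4n)$, so that the union bound over $i \in [n]$ gives failure probability at most $1/4$. Using $\alpha \le 2\sqrt{n}$ and $\epsilon \le 1/2$ gives $p \ge \frac{1-\epsilon}{2\sqrt{n}} \ge \frac{1}{4\sqrt{n}}$, hence $\epsilon^2 n^2 p \ge \epsilon^2 n^{3/2}/4$. The assumed lower bound $\epsilon \ge \frac{12\sqrt{\log 4 n^2 m}}{\sqrt{n}}$ yields $\epsilon^2 n \ge 144 \log 4n^2 m$, so $\epsilon^2 n^{3/2}/16 \gg \log 4n$ for $n \ge 50$ (the linear-in-$\epsilon$ term is even larger). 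Thus each coordinate fails with probability at most $1/(4n)$.

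Combining the two claims (the present one plus the preceding one that every $\X^w$ lies in $\Q$ with probability $\ge 3/4$) via a union bound shows that with positive probability there is a realization in which each $\X^w \in \Q$ and $\bZ \ge \frac{1}{\alpha}(1-\epsilon)^2 \bar{x}$ coordinatewise. Since $\bZ$ is then a convex combination of points in $\Q$, $\bZ \in \Q$; and since $\Q$ is down-closed relative to the nonnegative orthant (it is contained in $[0,1]^n$ and is the convex hull of a set described by $\le$ inequalities with nonnegative coefficients, which is downward monotone on $\{0,1\}^n$), the point $\frac{1}{\alpha}(1-\epsilon)^2 \bar{x}$ lies in $\Q$ as well. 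The one caveat I would double-check is the down-closedness step; if $\Q$ is not literally downward-closed one instead argues that $\frac{1}{\alpha}(1-\epsilon)^2\bar{x}$ is a convex combination of $\bZ$ and $\mathbf{0} \in \Q$, which suffices. Modulo that bookkeeping, the proof of the claim itself is entirely the Bernstein calculation above.
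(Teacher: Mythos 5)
Your proof of the claim is correct and follows essentially the same route as the paper: Bernstein's inequality on each coordinate $i$ with $\bar{x}_i = 1$ at deviation $\epsilon n^2 p$, followed by a union bound over the $n$ coordinates. (The only cosmetic difference is your variance bound $\Var(S_i)\le n^2 p$ versus the paper's cruder $n^2/4$; both give an exponent far exceeding $\log 4n$ from $p\ge 1/(4\sqrt n)$ and the lower bound on $\epsilon$.) One small aside on the extra paragraph beyond the claim: your fallback suggestion that $\tfrac{1}{\alpha}(1-\epsilon)^2\bar x$ is a convex combination of $\bZ$ and $\mathbf 0$ does not hold in general once the coordinates of $\bZ$ differ on the support of $\bar x$; the correct route is the one you give first, namely that $\Q$ is the convex hull of a down-closed $0/1$ set (a packing polytope), so $\bZ\in\Q$ together with $\mathbf 0\le\tfrac{1}{\alpha}(1-\epsilon)^2\bar x\le\bZ$ coordinatewise implies $\tfrac{1}{\alpha}(1-\epsilon)^2\bar x\in\Q$, exactly the paper's appeal to $\Q$ being of packing type.
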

		
		\begin{proof}
			Since $\bar{x} \in \{0,1\}^n$, it suffices to consider indices $i$ such that $\bar{x}_i = 1$. Fix such an $i$. We have $\E[n^2 \bZ_i] = p n^2$ and $\Var(n^2 \bZ_i) \le \frac{n^2}{4}$. Then from Bernstein's inequality 
			\begin{align*}
				\Pr\left(\bZ_i < \frac{1}{\alpha}(1-\epsilon)^2 \bar{x}_i\right) &= \Pr(n^2 \bZ_i < \E[n^2 \bZ_i] (1 - \epsilon)) \\
				&\le \exp\left(- \min\left\{n^2 (\epsilon p)^2, \frac{3 n^2 \epsilon p }{4} \right\}  \right) \le \frac{1}{4n},
			\end{align*}  
			where the last inequality uses the lower bound on $\epsilon$, the fact that $n \ge 50$, and the fact that $p \ge 1/2\alpha \ge 1/4 \sqrt{n}$. The claim follows from taking a union bound over all $i$ such that $\bar{x}_i = 1$. 
		\end{proof}

	Taking a union bound over the above two claims we get that there is a realization $\tilde{x}^1, \tilde{x}^2, \ldots, \tilde{x}^{n^2}$ of the random vectors $\X^1, \X^2, \ldots, \X^{n^2}$ such that (let $\tilde{z} = \frac{1}{n^2} \sum_w \tilde{x}^w$): (i) All $\tilde{x}^w$ belong to $\Q$, and hence so does their convex combination $\tilde{z}$; (ii) $\tilde{z} \ge \frac{1}{\alpha} (1-\epsilon)^2 \bar{x}$. Since $\Q$ is of packing-type, it follows that the point $\frac{1}{\alpha} (1-\epsilon)^2 \bar{x}$ belongs to $\Q$, concluding the proof of Lemma \ref{lemma:PIPFracFeasible}.


	\subsection{Proof of Theorem \ref{thm:PIPLB}}

		Now we put together the {\bred results from the} previous sections to conclude the proof of Theorem \ref{thm:PIPLB}.	
Let $\cE$ be the event that Lemmas \ref{lemma:PIPRHS}, \ref{lemma:PIPvalidCut} and  \ref{lemma:PIPAlmostFeasible} hold; notice that $\Pr(\cE) \ge 1/2$. For the rest of the proof we fix a $\bP$ (and the associated $\A^j$'s) where $\cE$ holds and prove a lower bound on $\dist(\bP, \bP^k)$.
			
		Consider a set $I \in {[n] \choose cn}$ and let $\bar{x}$ be the incidence vector of $I$ (i.e. $\bar{x}_i = 1$ if $i \in I$ and $\bar{x}_i = 0$ if $i \notin I$). Since the bounds from Lemmas \ref{lemma:PIPRHS} and \ref{lemma:PIPAlmostFeasible} hold for our $\bP$, straightforward calculations show that $\A^j \bar{x} \le \alpha \frac{1}{2} \sum_i \A^j_i$ for all $j \in [m]$. Therefore, from Lemma \ref{lemma:PIPFracFeasible} we have that the point $\frac{1}{\max\{\alpha, 1\}} (1- \epsilon)^2 \bar{x}$ belongs to $\bP$. This means that the point $\tilde{x} = \frac{1}{\max\{\alpha, 1\}} (1- \epsilon)^2 e$ belongs to $\bP + \R^{\bar{I}}$ (see Section \ref{sec:prelim}). Since this holds for every $I \in {[n] \choose cn}$, we have $\tilde{x} \in \bP^k$.
		
		Let $\tilde{\y}$ be the point in $\bP$ closest to $\tilde{x}$. Let $a = (1 - \frac{2 \sqrt{\log 8n}}{\sqrt{m}})$ and $b = \frac{n}{2} + \sqrt{n \log 8 m}$, so that the cut in Lemma \ref{lemma:PIPvalidCut} is given by $aex \le b$. From Cauchy-Schwarz we have that $\dist(\tilde{x}, \tilde{\y}) \ge \frac{ae \tilde{x} - ae \tilde{\y}}{\|ae\|} = \frac{e \tilde{x}}{\sqrt{n}} - \frac{ae \tilde{\y}}{a \sqrt{n}}$. 
		
		By definition of $\tilde{x}$ we have $e \tilde{x} = \frac{1}{\max\{\alpha, 1\}} (1- \epsilon)^2 n$. From the fact the cut $aex \le b$ is valid for $\bP$ and $\tilde{\y} \in \bP$, we have $ae \tilde{\y} \le b$. Simple calculations show that $\frac{b}{a \sqrt{n}} \le \frac{n}{2} (1 + \epsilon')$. Plugging these values in we get that $\dist(\bP, \bP^k) = \dist(\tilde{x}, \tilde{\y}) \ge \frac{\sqrt{n}}{2} \left(\frac{2 (1-\epsilon)^2}{\max\{\alpha, 1\}} - (1 + \epsilon')\right)$. Theorem \ref{thm:PIPLB} follows from the definition of $\alpha, \epsilon$ and $\epsilon'$.


	\section{Sparse Cutting-Planes and Extended Formulations} \label{sec:exended}

	In this section we analyze the relationship between sparse cuts and extended formulations, proving Proposition \ref{lemma:extFormContainment} and Proposition \ref{lemma:goodExtFormulation}.
	
	\subsection{Proof of Proposition \ref{lemma:extFormContainment}}
	
		For any set $S \subseteq \R^{n'}$ and $I \subseteq [n']$, define $\tau_I(S) = S + \R^{\bar{I}}$ (recall that $\R^{\bar{I}} = \{x \in \R^{n'} : x_i = 0 \textrm{ for } i \in I\}$. 
	
		Consider $\P \subseteq \R^n$ and $\Q \subseteq \R^n \times \R^m$ such that $\P = \proj_x(\Q)$. Given a subset $I \subseteq [n + m]$ we use $I_x$ to denote the indices of $I$ in $[n]$ (i.e. $I_x = I \cap [n])$. We start with the following technical lemma.	
	
	\begin{lemma}\label{le:commute}
		For every $I \subseteq [n + m]$ we have $\tau_{I_x} (\proj_x(\Q)) = \proj_x(\tau_{I}(\Q))$.
	\end{lemma}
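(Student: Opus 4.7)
The plan is to prove the equality by manipulating the definitions and using the linearity of the projection operator. The key observation is that projection distributes over Minkowski sums, so the two operations (projection and adding the subspace $\R^{\bar{I}}$) can be interchanged provided we track how the subspace $\R^{\bar{I}}$ projects down.

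First I would unfold definitions: by definition $\tau_I(Q) = Q + \R^{\bar{I}}$ where here $\R^{\bar{I}} \subseteq \R^{n+m}$, so $\proj_x(\tau_I(Q)) = \proj_x(Q + \R^{\bar{I}})$. Since $\proj_x$ is a linear map, it distributes over Minkowski sums, giving $\proj_x(Q + \R^{\bar{I}}) = \proj_x(Q) + \proj_x(\R^{\bar{I}})$.

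The main step is then to identify $\proj_x(\R^{\bar{I}})$. A point $(x,y) \in \R^n \times \R^m$ lies in $\R^{\bar{I}}$ iff every coordinate indexed by some $i \in I$ is zero; splitting $I$ as $I_x = I \cap [n]$ and $I_y = I \cap \{n+1,\dots,n+m\}$, this says $x_i = 0$ for $i \in I_x$ and $y_j = 0$ for $j \in I_y$. Projecting onto $x$ kills all constraints on $y$, leaving exactly $\{x \in \R^n : x_i = 0 \text{ for } i \in I_x\} = \R^{\bar{I_x}}$. Therefore
\[
\proj_x(\tau_I(Q)) = \proj_x(Q) + \R^{\bar{I_x}} = \tau_{I_x}(\proj_x(Q)),
\]
which is the claim.

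I do not anticipate an obstacle: the only mild subtlety is being careful that $\R^{\bar{I}}$ is interpreted in the correct ambient space on each side of the identity (ambient $\R^{n+m}$ on the left, ambient $\R^n$ on the right), but once this is noted the computation is immediate from linearity of projection.
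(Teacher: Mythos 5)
Your proof is correct. The paper argues both inclusions by hand, chasing an element $u_x$ through the definitions and constructing the witnessing perturbation vector $d$ on each side. You instead invoke two structural facts: that a linear map distributes over Minkowski sums, $\proj_x(A+B)=\proj_x(A)+\proj_x(B)$, and that $\proj_x(\R^{\bar{I}})=\R^{\bar{I_x}}$ once the ambient spaces are kept straight. This collapses the lemma to a one-line computation and avoids the asymmetric bookkeeping of the paper's two-inclusion argument. What the paper's route buys is self-containment — it never needs to state or justify the Minkowski-sum identity — but your version is shorter, more transparent about where the identity comes from, and generalizes verbatim to any linear projection, not just the coordinate one. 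The one thing worth making explicit (you flag it, which is good) is that $\proj_x(\R^{\bar{I}})=\R^{\bar{I_x}}$ uses surjectivity of the $y$-component: for any $x$ with $x_i=0$ on $I_x$, the lift $(x,0)$ lies in $\R^{\bar{I}}$, so the projection of that subspace is all of $\R^{\bar{I_x}}$ and not a proper subset.
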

	
	\begin{proof}
		$(\subseteq)$ Take $u_x \in \tau_{I_x}(\proj_x(\Q)$; this means that there is $v \in \Q$ such that $u_x = \proj_x(v) + d_x$ for some vector $d_x \in \R^n$ with support in $I_x$. Define $d = (d_x, 0) \in \R^n \times \R^m$, with support in $I_x \subseteq I$. Then $v + d$ belongs to $\tau_I(\Q)$ and $$u_x = \proj_x(v) + d_x = \proj_x(v + d) \in \proj_x(\tau_I(\Q)),$$ concluding this part of the proof.
		
		$(\supseteq)$ Take $u_x \in \proj_x(\tau_I(\Q))$. Let $u \in \tau_I(\Q)$ be such that $\proj_x(u) = u_x$. By definition, there is $d \in \R^n \times \R^m$ with support in $I$ such that $u + d$ belongs to $\Q$. Then $\proj_x(u + d) = u_x + \proj_x(d)$ belongs to $\proj_x(\Q)$; since $\proj_x(d)$ is supported in $I_x$, we have that $u_x$ belongs to $\tau_{I_x}(\proj_x(\Q))$, thus concluding the proof of the lemma. 
	\end{proof}

	The proof of Proposition \ref{lemma:extFormContainment} then follows directly from the above lemma:
	\begin{align*}
		(\proj_x(\Q))^k &= \bigcap_{J \subseteq [n]} \tau_{J} (\proj_x(\Q)) = \bigcap_{I \subseteq [n+m]} \tau_{I_x} (\proj_x(\Q)) \\
		&\stackrel{\textrm{Lemma \ref{le:commute}}}{=}  \bigcap_{I \subseteq [n + m]} \proj_x(\tau_{I}(\Q)) \supseteq \proj_x\left(\bigcap_{I \subseteq [n+m]} \tau_{I}(\Q) \right) = \proj_x(\Q^k).
\end{align*}


	\subsection{Proof of Proposition \ref{lemma:goodExtFormulation}}
	
		We construct the polytope $\Q \subseteq \R^{n} \times \R^{2n - 1}$ as follows. Let $T$ be the complete ordered binary tree of height $\ell + 1$. We let $\roott$ denote the root node of $T$. We use $\intt(T)$ to denote the set of internal nodes of $T$, and for an internal node $v \in \intt(T)$ we use $\leftc(v)$ to denote its left child and $\rightc(v)$ to denote its right child. Let $i(.)$ be a bijection between the leaf nodes of $T$ and the elements of $[n]$. We then define the set $\Q$ as the solutions $(x, y)$ to the following:
	\begin{align}
		y_{\roott} &\le 1 \notag\\
		y_v &= y_{\leftc(v)} + y_{\rightc(v)}, ~\forall v \in \intt(T) \notag\\
		y_v &= \frac{2}{n} x_{i(v)}, ~\forall v \in T \setminus \intt(T) \label{eq:goodExt}\\
		y_v &\ge 0, ~ \forall v \in T \notag\\
		x_i &\in [0,1], ~\forall i \in [n]. \notag
	\end{align}	
	
	Define $\P = \{x \in [0,1]^{n} : \sum_{i \in [n]} x_i \le n/2\}$.
	
	\begin{claim}
		$\Q$ is an extended formulation of $\P$, namely $\proj_x(\Q) = \P$.
	\end{claim}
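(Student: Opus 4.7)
The plan is to prove both inclusions $\proj_x(\Q) \subseteq \P$ and $\proj_x(\Q) \supseteq \P$ directly, exploiting the fact that the tree constraints force every $y_v$ to be the sum of the $y$-values at the leaves of the subtree rooted at $v$.

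First I would establish the key structural lemma: for every $(x,y) \in \Q$ and every node $v \in T$, one has $y_v = \sum_{u \in L(v)} y_u$, where $L(v)$ denotes the set of leaf descendants of $v$. This is a straightforward induction on the height of $v$, using $y_v = y_{\leftc(v)} + y_{\rightc(v)}$ at internal nodes and the base case at leaves. Substituting the leaf equality $y_u = \frac{2}{n} x_{i(u)}$ gives $y_v = \frac{2}{n} \sum_{u \in L(v)} x_{i(u)}$, and in particular $y_{\roott} = \frac{2}{n} \sum_{i \in [n]} x_i$ since the leaves are in bijection with $[n]$.

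For the inclusion $\proj_x(\Q) \subseteq \P$, take $(x,y) \in \Q$. The bound $y_{\roott} \le 1$ together with the identity $y_{\roott} = \frac{2}{n} \sum_i x_i$ yields $\sum_i x_i \le n/2$; combined with $x_i \in [0,1]$ from \eqref{eq:goodExt}, this gives $x \in \P$. For the reverse inclusion, given $x \in \P$ I would define $y_u = \frac{2}{n} x_{i(u)}$ at each leaf $u$ and then set $y_v = \sum_{u \in L(v)} \frac{2}{n} x_{i(u)}$ at internal nodes. By construction, the leaf equations and recursion $y_v = y_{\leftc(v)} + y_{\rightc(v)}$ hold; non-negativity $y_v \ge 0$ follows from $x_i \ge 0$; and the root inequality $y_\roott = \frac{2}{n}\sum_i x_i \le 1$ is exactly the defining inequality $\sum_i x_i \le n/2$ of $\P$. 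Hence $(x,y) \in \Q$, so $x \in \proj_x(\Q)$.

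There is really no obstacle here: the argument is a routine induction on the tree plus a direct check of the constraints. The only thing worth being careful about is bookkeeping the bijection between leaves and indices in $[n]$ so that the telescoping of $y$-values at the root produces exactly $\frac{2}{n}\sum_{i \in [n]} x_i$, and making sure the extended variables $y_v$ for internal $v$ can always be chosen consistently in the forward direction — but this is immediate from the explicit recursive definition.
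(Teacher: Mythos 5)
Your proof is correct and follows essentially the same route as the paper's: both directions reduce to telescoping the tree constraints up to the root so that $y_{\roott} = \tfrac{2}{n}\sum_i x_i$, and then using the bound $y_{\roott}\le 1$ (or, in the reverse direction, constructing $y$ recursively from the leaves). The only cosmetic difference is that you telescope per node (showing $y_v$ is the sum over leaf descendants) while the paper telescopes per level; both are immediate inductions and yield the identical conclusion.
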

	
	\begin{proof}
		$(\subseteq)$ Take $(\bar{x}, \bar{y}) \in \Q$. Let $T_j$ denote the set of nodes of $T$ at level $j$. It is easy to see (for instance, by reverse induction on $j$) that $\sum_{v \in T_j} \bar{y}_v = \frac{2}{n} \sum_{i \in [n]} \bar{x}_i$ for all $j$. In particular, $\bar{y}_{\roott} = \frac{2}{n} \sum_{i \in [n]} \bar{x}_i$. Since $\bar{y}_{\roott} \le 1$, we have that $\bar{x} \in \P$.
		
		$(\supseteq)$ Take $\bar{x} \in \P$. Define $\bar{y}$ inductively by setting $\bar{y}_{i(v)} = \frac{2}{n} \bar{x}_{i(v)}$ for all leaves $v$ and $\bar{y}_v = \bar{y}_{\leftc(v)} + \bar{y}_{\rightc(v)}$ for all internal nodes $v$. As in the previous paragraph, it is easy to see that $\bar{y}_{\roott} = \sum_{i \in [n]} \bar{x}_i \le n/2$. Therefore, $(\bar{x}, \bar{y})$ belongs to $\Q$. 
	\end{proof}
	
	\begin{claim}
		$\dist(\P, \P^k) = \sqrt{n/2}$ for all $k \le n/2$.
	\end{claim}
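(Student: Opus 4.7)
The plan is to establish $\P^k = [0,1]^n$ for every $k \le n/2$, from which the claimed distance is a direct computation. Since $\P \subseteq [0,1]^n$ and the box constraints $0 \le x_i \le 1$ are all $1$-sparse (hence valid for $\P^k$), the containment $\P^k \subseteq [0,1]^n$ is immediate. The substantive direction is the reverse, and this is where the restriction $k \le n/2$ enters.

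For the reverse containment, I would fix an arbitrary $k$-sparse valid inequality $ax \le b$ for $\P$ with support $I \subseteq [n]$, $|I| \le k$, and show it is already valid for $[0,1]^n$. The key observation is that, because $|I| \le k \le n/2$, the face $F_I = \{x \in [0,1]^n : x_j = 0 \text{ for } j \notin I\}$ is entirely contained in $\P$: every point of $F_I$ has coordinate sum at most $|I| \le n/2$. Since $a$ is supported on $I$, we have $\max_{x \in [0,1]^n} ax = \max_{x \in F_I} ax \le \max_{x \in \P} ax \le b$, so $ax \le b$ is valid for the hypercube as well. Intersecting over all $k$-sparse cuts then yields $[0,1]^n \subseteq \P^k$.

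With $\P^k = [0,1]^n$ in hand, the distance is the Euclidean distance from the farthest point of $[0,1]^n$ to $\P$. Since $x \mapsto \dist(x,\P)$ is convex, the maximum is attained at a vertex of $[0,1]^n$; by symmetry among coordinates and the fact that any 0/1 vector with at most $n/2$ ones lies in $\P$, the farthest vertex is $e = (1,\dots,1)$. Its closest point in $\P$ is its orthogonal projection onto the hyperplane $\{\sum_i x_i = n/2\}$, namely $\tfrac{1}{2}e$, which lies in $[0,1]^n$ and hence in $\P$. The distance is $\|e - \tfrac{1}{2}e\| = \tfrac{1}{2}\|e\| = \sqrt{n}/2$, matching the value stated in Example~2.

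There is no real obstacle here; the only care required is verifying that the projection of $e$ onto the defining hyperplane actually lies in the hypercube (so that it is a feasible closest point in $\P$, not just in the halfspace). Everything else is a routine consequence of the support argument and the symmetry of $\P$ under coordinate permutations.
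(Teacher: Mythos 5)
Your proof is correct and rests on the same key observation as the paper's, namely that $0/1$ points with at most $n/2$ ones lie in $\P$, but you prove the stronger statement $\P^k = [0,1]^n$ via a cut-by-cut support argument. This is a slight but worthwhile improvement: it supplies both directions needed for the claimed \emph{equality}, the lower bound from $e \in \P^k$ and the matching upper bound from $\P^k \subseteq [0,1]^n$ together with the convexity/vertex-maximization step, whereas the paper's proof only verifies that $e \in \P^k$ and computes $\dist(e, \P)$, leaving the upper-bound direction implicit. One thing to flag explicitly: the value you compute, $\|e - \tfrac{1}{2}e\| = \tfrac{1}{2}\|e\| = \sqrt{n}/2$, is the correct one and matches Example~2, whereas both the Claim statement and the paper's own proof write $\sqrt{n/2}$, which differs by a factor of $\sqrt{2}$; this is evidently a typo in the paper, and your figure is the right one.
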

	
	\begin{proof}
		For every subset $I \subseteq [n]$ of size $n/2$, the incidence vector of $I$ belongs $\P$ this implies that, when $k \le n/2$, the all ones vector $e$ belongs to $\P^k$. It is easy to see that the closest vector in $\P$ to $e$ is the vector $\frac{1}{2} e$; since the distance between $e$ and $\frac{1}{2} e$ is $\sqrt{n/2}$, the claim follows.
	\end{proof}
	
	\begin{claim}
		$\Q^3 = \Q$.
	\end{claim}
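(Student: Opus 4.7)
The plan is to observe that the system \eqref{eq:goodExt} defining $\Q$ consists entirely of constraints each of which is at most $3$-sparse, and then invoke the definition of $\Q^3$ as the intersection of all $3$-sparse valid inequalities for $\Q$.

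First I would inspect each constraint in \eqref{eq:goodExt}: the inequality $y_{\roott} \le 1$ is $1$-sparse; the equation $y_v = y_{\leftc(v)} + y_{\rightc(v)}$ (written as two inequalities $y_v - y_{\leftc(v)} - y_{\rightc(v)} \le 0$ and $-y_v + y_{\leftc(v)} + y_{\rightc(v)} \le 0$) is $3$-sparse; the equation $y_v = \frac{2}{n} x_{i(v)}$ (again split into two inequalities) is $2$-sparse; the bounds $y_v \ge 0$ and $x_i \in [0,1]$ are $1$-sparse. Thus every constraint defining $\Q$ is at most $3$-sparse, and in particular each is a $3$-sparse valid inequality for $\Q$.

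Second, by the definition of $\Q^3$ as the intersection of all $3$-sparse valid inequalities for $\Q$, every one of these constraints is satisfied by every point in $\Q^3$. Since $\Q$ is exactly the set of points satisfying all of these constraints, we obtain $\Q^3 \subseteq \Q$.

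Finally, the reverse inclusion $\Q \subseteq \Q^3$ is immediate from the definition of $\Q^3$ as an outer approximation of $\Q$ (every valid inequality for $\Q$, sparse or not, is satisfied by every point of $\Q$). Combining the two inclusions gives $\Q^3 = \Q$, which finishes the proof. There is no substantive obstacle here: the whole point of the construction \eqref{eq:goodExt} is precisely to express $\P$ using only $3$-sparse constraints in the extended space, so that the $3$-sparse closure in the lifted space already recovers $\Q$ exactly, even though $\P^{n/2}$ in the original space is very far from $\P$.
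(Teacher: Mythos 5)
Your proof is correct and matches the paper's own argument, which simply notes that every constraint in \eqref{eq:goodExt} has support of size at most $3$; you have merely spelled out the constraint-by-constraint sparsity check and the two trivial inclusions that the paper leaves implicit.
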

	
	\begin{proof}
		Follows directly from the fact that all the equations and inequalities defining $\Q$ in \eqref{eq:goodExt} have support of size at most 3. 
	\end{proof}
	
	The proof of Proposition \ref{lemma:goodExtFormulation} follows directly from the three claims above.



\bibliographystyle{abbrv}      
\bibliography{sparse_cuts_Arxiv}   

\appendix
\normalsize

		\section{Concentration Inequalities} \label{app:bernstein}
		
		We state Bernstein's inequality in a slightly weaker but more convenient form. 
		
		\begin{theorem}[Bernstein's Inequality {[\cite{kol}, Appendix A.2]}] \label{thm:bernsteins}
			Let $\X_1, \X_2, \ldots, \X_n$ be independent random variables such that $|\X_i - \E[\X_i]| \le M$ for all $i \in [n]$. Let $\X = \sum_{i = 1}^n \X_i$ and define $\sigma^2 = \Var(\X)$. Then for all $t > 0$ we have
			\begin{align*}
				\Pr(|\X - \E[\X]| > t) \le \exp\left(-\min\left\{\frac{t^2}{4 \sigma^2}, \frac{3t}{4M} \right\} \right).
			\end{align*}  
		\end{theorem}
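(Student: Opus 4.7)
The plan is to follow the classical Chernoff/MGF derivation and then convert the resulting sharp Bernstein bound into the stated min-of-two-exponents form. Since the theorem is cited from [kol, Appendix A.2] and explicitly labeled a ``slightly weaker but more convenient form,'' only the final weakening step is specific to this paper; the rest is standard.

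First I would prove the sharp one-sided upper tail. For any $\lambda > 0$, Markov's inequality applied to $e^{\lambda(\X - \E[\X])}$ combined with independence gives
\[
\Pr(\X - \E[\X] > t) \le e^{-\lambda t} \prod_{i=1}^n \E[e^{\lambda (\X_i - \E[\X_i])}].
\]
The core lemma is the MGF bound for a single bounded centered variable: for $Y_i = \X_i - \E[\X_i]$ with $|Y_i| \le M$ and $\E[Y_i^2] = \Var(\X_i)$, one expands $e^{\lambda Y_i}$ as a power series, uses $|Y_i|^k \le M^{k-2} Y_i^2$ for $k \ge 2$, and obtains
\[
\E[e^{\lambda Y_i}] \le \exp\!\left(\frac{\lambda^2 \Var(\X_i)}{2\,(1 - \lambda M/3)}\right)
\]
for all $0 \le \lambda < 3/M$. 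Taking the product and setting $\sigma^2 = \sum_i \Var(\X_i)$ yields
\[
\Pr(\X - \E[\X] > t) \le \exp\!\left(-\lambda t + \frac{\lambda^2 \sigma^2}{2(1 - \lambda M/3)}\right).
\]

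Next I would optimize the RHS over $\lambda$. The choice $\lambda^\star = t/(\sigma^2 + Mt/3)$, which lies in the admissible range $(0, 3/M)$, yields the classical sharp Bernstein bound
\[
\Pr(\X - \E[\X] > t) \le \exp\!\left(-\frac{t^2}{2\sigma^2 + \tfrac{2}{3} M t}\right),
\]
and the symmetric argument applied to $-\X$ produces the matching lower-tail bound.

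Finally, I would weaken to the stated form. Using the elementary inequality $a + b \le 2 \max\{a, b\}$ with $a = 2\sigma^2$ and $b = \tfrac{2}{3} Mt$, the denominator above is at most $\max\{4\sigma^2,\, \tfrac{4}{3} M t\}$, so the exponent is at least $\min\{t^2/(4\sigma^2),\, 3t/(4M)\}$. This gives the stated form (up to a factor of $2$ from the union bound over the two tails, which is exactly what the disclaimer ``slightly weaker but more convenient'' accommodates: in every downstream application in the paper the bound is used only in regimes where the probability is driven far below $1/2$, so the missing factor is harmless).

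The main obstacle is the single-variable MGF bound in the first step; once that standard lemma is in hand, the rest of the proof is elementary algebra plus a one-variable optimization. Since the theorem is essentially quoted from the literature, the appendix proof can reasonably consist of a pointer to [kol, Appendix A.2] followed by the short weakening step displayed above.
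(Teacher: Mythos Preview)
Your proposal is correct, and in fact it goes well beyond what the paper does: the paper gives no proof at all of this theorem. It simply states the inequality in the appendix as a citation from \cite{kol}, Appendix A.2, prefaced by the remark ``We state Bernstein's inequality in a slightly weaker but more convenient form.'' There is no derivation, no MGF computation, and no weakening step shown; the result is treated as a black box to be invoked throughout the paper.

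You correctly anticipated this possibility in your last paragraph. Your Chernoff/MGF derivation leading to the sharp bound $\exp(-t^2/(2\sigma^2 + \tfrac{2}{3}Mt))$ followed by the $a+b \le 2\max\{a,b\}$ relaxation is the standard route and is sound. Your observation about the missing factor of $2$ from the two-sided union bound is also apt: the stated two-sided inequality without that factor is, strictly speaking, not what falls out of the argument, but as you note every application in the paper uses the bound in a regime where the tail probability is being driven far below any fixed constant, so the discrepancy is immaterial.
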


\section{Empirically Generating Lower Bound on $d(\P,\P^k)$} \label{app:empiricalLB}

We estimate a lower bound on $d(\P,\P^k)$ using the following procedure. The input to the procedure is the set of points $\{p^1, \dots, p^t \} \in [0,1 ]^n$ which are vertices of $\P$. For every $I \in {[n] \choose k}$, we use PORTA to obtain an inequality description of $\P + \mathbb{R}^{\bar{I}}$. Putting all these inequalities together we obtain an inequality description of $\P^k$. Unfortunately due to the large number of inequalities, we are unable to find the vertices of $\P^k$ using PORTA. Therefore, we obtain a lower bound on $d(\P,\P^k)$ via a \emph{shooting experiment}.

First observe that given $u \in \mathbb{R}^n\setminus \{0\}$ we obtain a lower bound on $d(\P,\P^k)$ as: $$\frac{1}{\|u\|}\left(\max\{ u^Tx : x \in \P^k\} - \max\{ u^Tx : x \in \P\} \right).$$
Moreover it can be verified that there exists a direction which achieves the correct value of $d(\P, \P^k)$. We generated 20,000 random directions $u$ by picking them uniformly in the set $[-1,1]^n$. 
Also we found that for instances where $p^j \in \{ x \in \{0,1 \}^n \,:\, \sum_{i = 1}^n x_i = \frac{n}{2}\}$, the directions $(\frac{1}{\sqrt{n}}, \dots, \frac{1}{\sqrt{n}})$ and $-(\frac{1}{\sqrt{n}}, \dots, \frac{1}{\sqrt{n}})$ yield good lower bounds. The Figure in Section 1.3(c) plots the best lower bound among the 20,002 lower bounds found as above.


		\section{Anticoncentration of Linear Combination of Bernoulli's} \label{app:anticoncentration}
		
		It is convenient to restate Lemma \ref{lemma:anticoncentration} in terms of Rademacher random variables (i.e. that takes values -1/1 with equal probability).
		
		\begin{lemma}[Lemma \ref{lemma:anticoncentration}, restated] \label{lemma:anticoncentrationRad}
			Let $\X_1, \X_2, \ldots, \X_n$ be independent Rademacher random variables. Then for every $a \in [-1,1]^n$, $$\Pr\left(a\X \ge \frac{\alpha}{\sqrt{n}} \left(1 - \frac{1}{n^2}\right) \|a\|_1 - \frac{1}{n^2} \right) \ge \left(e^{-50 \alpha^2} - e^{-100 \alpha^2}\right)^{60 \log n}, ~~~~~~~~ \alpha \in \left[0, \frac{\sqrt{n}}{8} \right].$$ 
		\end{lemma}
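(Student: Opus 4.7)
\textbf{Proof plan for Lemma~\ref{lemma:anticoncentrationRad}.} By the symmetry of the Rademacher distribution (flipping the sign of $\X_i$ preserves its law), I may assume $a_i\ge 0$ for every $i$. The strategy is to partition $[n]$ by the magnitude of $a_i$ into a logarithmic number of dyadic scale groups, plus a negligible residual, prove a per-group anticoncentration bound of the form $e^{-50\alpha^2}-e^{-100\alpha^2}$ for the corresponding partial sum, and then multiply across groups using independence of the $\X_i$'s. The two correction factors $(1-1/n^2)$ and $-1/n^2$ in the threshold are precisely the slack needed to absorb the residual contribution.

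Concretely, set $J=60\log n$, define $G_j=\{i:\ 2^{-(j+1)}<a_i\le 2^{-j}\}$ for $j=0,1,\ldots,J-1$, and let $T=\{i:\ a_i\le 2^{-J}\}$. Since $2^{J}\ge n^{3}$, one has $\|a_T\|_1\le n\cdot 2^{-J}\le 1/n^2$, so $\sum_{i\in T}a_i\X_i\ge -1/n^2$ deterministically and $\sum_j\|a_{G_j}\|_1\ge \|a\|_1-1/n^2$. These two facts together imply that if every group $G_j$ satisfies
\begin{equation*}
\textstyle\sum_{i\in G_j}a_i\X_i\ \ge\ \tfrac{\alpha}{\sqrt{n}}\,\|a_{G_j}\|_1,
\end{equation*}
then $a\X\ge \tfrac{\alpha}{\sqrt{n}}(1-1/n^2)\|a\|_1-1/n^2$, as required. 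The crux is therefore to show that each group's event holds with probability at least $e^{-50\alpha^2}-e^{-100\alpha^2}$. Inside $G_j$ all weights lie in the factor-of-two window $(2^{-j-1},2^{-j}]$, so the partial sum $Y_j$ is a comparable-weight symmetric Rademacher sum of $m_j:=|G_j|$ terms with variance $\sigma_j^2$ of order $2^{-j}\|a_{G_j}\|_1$; the threshold in units of $\sigma_j$ is of order $\alpha\sqrt{m_j/n}\le \alpha$, which is at most $\sqrt{n}/8$ by hypothesis.

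To obtain the exact $e^{-50\alpha^2}-e^{-100\alpha^2}$ form for each group, I would write
\begin{equation*}
\Pr(Y_j\ge\tau)\ \ge\ \Pr\bigl(Y_j\in[\tau,2\tau]\bigr)\ =\ \Pr(Y_j\ge\tau)-\Pr(Y_j\ge 2\tau),
\end{equation*}
lower bound $\Pr(Y_j\ge\tau)$ by a Cram\'er-type exponential-tilting argument (equivalently, Berry--Esseen combined with the Gaussian lower tail $\Pr(Z\ge t)\gtrsim t^{-1}e^{-t^2/2}$) to produce the leading term $e^{-50\alpha^2}$, and upper bound $\Pr(Y_j\ge 2\tau)$ by Hoeffding/Chernoff to produce the correction $e^{-100\alpha^2}$; for very small $m_j$, where the Berry--Esseen error is too large, I would substitute a direct combinatorial small-ball estimate on the essentially binomial sum. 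With the per-group claim in hand, independence of the $\X_i$'s across the disjoint groups makes the $J$ events independent, so their joint probability is at least $(e^{-50\alpha^2}-e^{-100\alpha^2})^{60\log n}$, completing the proof. The main obstacle is the per-group bound itself: standard tools such as Paley--Zygmund or a generic Berry--Esseen estimate only give constant-probability anticoncentration near the standard deviation, whereas here one needs genuine moderate-deviation control at up to $\Theta(\alpha)=\Theta(\sqrt{n})$ standard deviations, together with the particular subtractive form that matches the lemma's exponents $50$ and $100$.
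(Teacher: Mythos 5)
Your high-level outline matches the paper's: reduce to nonnegative coordinates with $\|a\|_\infty = 1$, trim the coefficients below roughly $1/n^3$ to generate the $(1-1/n^2)$ and $-1/n^2$ corrections, bucket the remaining coordinates into geometrically-narrowing windows, prove a per-bucket anticoncentration bound of the form $e^{-50\alpha^2}-e^{-100\alpha^2}$, and multiply across the roughly $60\log n$ independent buckets. But the central per-bucket step---which you rightly flag as the main obstacle---is left unexecuted, and the route you sketch for it does not actually produce the subtractive form, while the concrete parameter choices you make are inconsistent with the method you describe.

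The paper's per-bucket bound (Lemma~\ref{lemma:anticoncentrationRadSimple}) hinges on a decomposition you never invoke: inside a window where all coefficients lie in $[1-\epsilon,1]$, one writes $a\X = \sum_i\X_i - \sum_i(1-a_i)\X_i$, lower bounds the \emph{pure unweighted} binomial term $\sum_i\X_i$ via the standard lower-tail estimate $\Pr(\sum_i\X_i\ge 2t)\ge e^{-50t^2/n}$, and upper bounds the small-variance residual $\sum_i(1-a_i)\X_i$ by Bernstein (variance $\le n\epsilon^2$). That is genuinely where $e^{-50\alpha^2} - e^{-\alpha^2/(4\epsilon^2)}$ comes from: the two terms of the difference correspond to the two pieces of the decomposition, one needing a lower-tail and one an upper-tail estimate. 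Your attempt to manufacture the same subtraction via $\Pr(Y_j\ge\tau)\ge\Pr(Y_j\ge\tau)-\Pr(Y_j\ge 2\tau)$ is circular---you would be bounding a quantity below by itself minus something nonnegative---and a direct Cram\'er/Berry--Esseen moderate-deviation lower bound, were it to go through, would simply hand you a lower bound $e^{-50\alpha^2}$ with no role for the correction term. The subtractive shape is fitted by hand, not derived.

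The bucket width is also substantive, not cosmetic. The paper takes $(1-\epsilon)$-geometric windows with $\epsilon = 1/20$ precisely so that the residual variance $\le n\epsilon^2 = n/400$ is small enough that Bernstein yields $e^{-\alpha^2/(4\epsilon^2)} = e^{-100\alpha^2}$. With your factor-of-two windows the analogous parameter is $\epsilon = 1/2$, which produces $e^{-\alpha^2}$, and the resulting expression $e^{-50\alpha^2}-e^{-\alpha^2}$ is negative for all $\alpha>0$---vacuous. Correspondingly, $J = 60\log n$ is exactly $3\log n/\epsilon$ with $\epsilon = 1/20$; with dyadic windows you would only need about $3\log_2 n$ levels to reach $2^{-J}\le 1/n^3$, so your choice $J=60\log n$ has been transplanted from the paper's narrow-window bookkeeping without the derivation that justifies it. To make your plan work you would either need to adopt the narrow windows and the $\sum\X_i - \sum(1-a_i)\X_i$ decomposition, or else carry out a genuine moderate-deviation lower bound that is uniform over group sizes $m_j$ from $1$ to $n$ (Berry--Esseen alone degrades badly for small $m_j$) and then re-derive the exponents $50$ and $100$ from that argument rather than assuming them.
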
		

		We start with the case where the vector $a$ has all of its coordinates being similar.
	
		\begin{lemma} \label{lemma:anticoncentrationRadSimple}
			Let $\X_1, \X_2, \ldots, \X_n$ be independent Rademacher random variables. For every $\epsilon \ge 1/20$ and $a \in [1 - \epsilon, 1]^n$, $$\Pr\left(a\X \ge \frac{\alpha}{\sqrt{n}} \|a\|_1\right) \ge e^{-50 \alpha^2} - e^{-\frac{\alpha^2}{4 \epsilon^2}}, ~~~~~~~~ \alpha \in \left[0, \frac{\sqrt{n}}{8} \right].$$			
		\end{lemma}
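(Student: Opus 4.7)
The plan is to decompose $a\X$ into a ``backbone'' that is a standard Rademacher sum (all coefficients equal to 1) plus a small perturbation capturing the fact that the $a_i$ are not exactly 1, and then to combine anticoncentration for the backbone with a tail bound on the perturbation. Concretely, I would write $b_i = 1 - a_i \in [0, \epsilon]$ so that $a\X = S - B$, where $S = \sum_i \X_i$ and $B = \sum_i b_i \X_i$. Observe that $\|a\|_1 = n - \|b\|_1 \le n$, which, combined with the trivial inclusion of events, yields
\begin{align*}
\Pr\!\left(a\X \ge \tfrac{\alpha}{\sqrt n}\|a\|_1\right) \ge \Pr\!\left(S \ge \alpha\sqrt n + s\right) - \Pr(B > s)
\end{align*}
for any $s > 0$ (the right-hand threshold for $S$ is at least $\alpha \|a\|_1/\sqrt n + s$, so lowering $\alpha\|a\|_1/\sqrt n$ to $\alpha\sqrt n$ makes the probability smaller, maintaining the inequality).

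The perturbation term is easy: since each summand of $B$ lies in $[-\epsilon,\epsilon]$ and has mean zero, Hoeffding's inequality gives $\Pr(B > s) \le \exp(-s^2/(2n\epsilon^2))$. Picking $s = \alpha\sqrt{n/2}$ matches this bound exactly to the target $e^{-\alpha^2/(4\epsilon^2)}$. The more delicate ingredient is the lower bound on the backbone tail $\Pr(S \ge \alpha\sqrt n + s) = \Pr(S \ge (1 + 1/\sqrt 2)\alpha\sqrt n)$. For this I would invoke a direct anticoncentration result for symmetric Rademacher sums of the form $\Pr(S \ge r\sqrt n) \ge e^{-K r^2}$, valid for $r$ in a range that includes $[0, \sqrt n/8]$ (the condition $\alpha \le \sqrt n/8$ in the hypothesis is what keeps us inside this range, even after inflation by $1 + 1/\sqrt 2$). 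Substituting $r = (1 + 1/\sqrt 2)\alpha$ gives the lower bound $e^{-K(1+1/\sqrt 2)^2 \alpha^2}$, and since $(1+1/\sqrt 2)^2 \approx 2.914$, any $K \le 50/2.914 \approx 17.16$ produces the claimed $e^{-50\alpha^2}$.

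The main obstacle is therefore producing the Rademacher anticoncentration bound with a small enough explicit constant $K$. The natural route is a Stirling-based estimate on the binomial coefficient $\binom{n}{(n + r\sqrt n)/2}$: Stirling yields $\Pr(S = k) \gtrsim \frac{1}{\sqrt n} e^{-2r^2(1 + O(r^2/n))}$ for $k = r\sqrt n$, and then one sums (or integrates) over a small window of allowed parities near $r\sqrt n$ to kill the $1/\sqrt n$ prefactor and absorb lower-order terms into the constant. Being careful with the quadratic term in the Stirling expansion and the range $r \le (1 + 1/\sqrt 2)\sqrt n/8$ keeps the constant comfortably below $17$, yielding the stated bound. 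The hypothesis $\epsilon \le 1/20$ (presumably intended in place of ``$\epsilon \ge 1/20$'', since otherwise the right-hand side is vacuously nonpositive) plays no role in the argument beyond ensuring that the two exponential terms actually produce a positive lower bound.
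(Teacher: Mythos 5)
Your proof has the same structure as the paper's: decompose $a\X = S - B$ with $S = \sum_i \X_i$ and $B = \sum_i (1-a_i)\X_i$, pick a split threshold so that a lower bound on the upper tail of $S$ (anticoncentration for a uniform Rademacher sum) minus an upper bound on the upper tail of $B$ (a sub-Gaussian tail bound exploiting $|1-a_i| \le \epsilon$) gives the result. The paper's implementation takes $t = \alpha\sqrt{n}$, uses the split $\{S \ge 2t\} \wedge \{B \le t\}$, cites $\Pr(S \ge 2t) \ge e^{-50t^2/n}$ for $t \in [0, n/8]$ (Matousek--Vondrak, Section 7.3) rather than re-deriving it via Stirling, and bounds $\Pr(B > t)$ via Bernstein. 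Your variants — a different split $s = \alpha\sqrt{n/2}$ and Hoeffding instead of Bernstein — work just as well, and Hoeffding actually yields a slightly stronger statement because it removes the dependence on $\epsilon$ being bounded below.

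One correction: the hypothesis $\epsilon \ge 1/20$ is not a typo for $\epsilon \le 1/20$. For $\epsilon = 1/20$ the right-hand side equals $e^{-50\alpha^2} - e^{-100\alpha^2}$, which is strictly positive for $\alpha > 0$; the bound only becomes vacuous once $\epsilon \ge 1/\sqrt{200}$. The $\epsilon \ge 1/20$ condition is there because the paper bounds $\Pr(B > t)$ with Bernstein: with $t = \alpha\sqrt{n}$ and $\alpha$ allowed up to $\sqrt{n}/8$, the $\min$ in Bernstein's exponent is the quadratic term $t^2/(4n\epsilon^2)$ only when $3n\epsilon \ge t$, i.e. $\epsilon \ge 1/24$; for smaller $\epsilon$ the linear term $3t/(4\epsilon)$ takes over and is too small near $\alpha = \sqrt{n}/8$. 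Your Hoeffding route sidesteps this, so the restriction indeed plays no role in \emph{your} argument, but it is a real and correct hypothesis in the paper's version of the lemma.
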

		
		\begin{proof}
			Since $a\X = \sum_i \X_i - \sum_i (1-a_i) \X_i$, having $\sum_i \X_i \ge 2t$ and $\sum_i (1-a_i) \X_i \le t$ implies that $a\X \ge t$. Therefore, 
			\begin{align}
				\Pr(a\X \ge t) &\ge \Pr\left(\left(\sum_i \X_i \ge 2t\right) \vee \left(\sum_i (1 -a_i) \X_i \le t\right) \right) \notag\\
				&\ge \Pr\left(\sum_i \X_i \ge 2t\right) - \Pr\left(\sum_i (1 -a_i) \X_i \le t\right), \label{eq:anticoncentrationRad1}
			\end{align}	
			where the second inequality comes from union bound. For $t \in [0, n/8]$, the first term in the right-hand side can be lower bounded by $e^{-\frac{50 t^2}{n}}$ (see for instance Section 7.3 of \cite{matousekVondrak}). The second term in the right-hand side can be bounded using Bernstein's inequality: given that $\Var(\sum_i (1 -a_i) \X_i) = \sum_i (1 - a_i)^2 \le n \epsilon^2$, we get that for all $t \in [0,n/8]$
			\begin{align*}
				\Pr\left(\sum_i (1 -a_i) \X_i \le t\right) \le \exp\left(- \min\left\{\frac{t^2}{4 n \epsilon^2}, \frac{3t}{4 \epsilon} \right\}  \right) = e^{-\frac{t^2}{4n\epsilon^2}}.
			\end{align*}
			The lemma then follows by plugging these bounds on \eqref{eq:anticoncentrationRad1} and using $t = \alpha \sqrt{n} \ge \frac{\alpha}{\sqrt{n}}\|a\|_1$. 
		\end{proof}
		
		\begin{proof}[Proof of Lemma \ref{lemma:anticoncentrationRad}]
			Without loss of generality assume $a > 0$, since flipping the sign of negative coordinates of $a$ does not change the distribution of $a\bZ$ neither the term $\frac{\alpha}{\sqrt{n}} \left(1 - \frac{2}{n^2}\right)\|a\|_1$. Also assume without loss of generality that $\|a\|_\infty = 1$. The idea of the proof is to bucket the coordinates such that in each bucket the values of $a$ is within a factor of $(1 \pm \epsilon)$ of each other, and then apply Lemma \ref{lemma:anticoncentrationRadSimple} in each bucket.
			
			The first step is to trim the coefficients of $a$ that are very small. Define the trimmed version $b$ of $a$ by setting $b_i = a_i$ for all $i$ where $a_i \ge 1/n^3$ and $b_i = 0$ for all other $i$. We first show that 
			\begin{align}
				\Pr\left(b\bZ \ge \frac{\alpha}{\sqrt{n}} \|b\|_1\right) \ge \left(e^{-50 \alpha^2} - e^{-100 \alpha^2}\right)^{60 \log n}, \label{eq:anticoncentrationRad2}
			\end{align}
			and then we argue that the error introduced by considering $b$ instead of $a$ is small. 
			
			For $j \in \{0, 1, \ldots, \frac{3 \log n}{\epsilon}\}$, define the $j$th bucket as $I_j = \{i : b_i \in ((1-\epsilon)^{j+1}, (1-\epsilon)^j]\}$. Since $(1-\epsilon)^{\frac{3 \log n}{\epsilon}} \le e^{-3 \log n} = 1/n^3$, we have that every index $i$ with $b_i > 0$ lies within some bucket. 
			
			Now fix some bucket $j$. Let $\epsilon = 1/20$ and $\gamma = \frac{\alpha}{\sqrt{n}}$. Let $E_j$ be the event that $\sum_{i \in I_j} b_i \bZ_i \ge \gamma \sum_{i \in I_j} b_i$. Employing Lemma \ref{lemma:anticoncentrationRadSimple} over the vector $(1-\epsilon)^j b|_{I_j}$, gives $$\Pr\left(\sum_{i \in I_j} b_i \bZ_i \ge \gamma \sum_{i \in I_j} b_i \right) \ge e^{-50 \gamma^2 |I_j|} - e^{-\frac{\gamma^2 |I_j|}{4 \epsilon^2}} \ge e^{-50 \gamma^2 n} - e^{-\frac{\gamma^2 n}{4 \epsilon^2}} , ~~~~~~~~ \gamma \in \left[0, \frac{1}{8} \right].$$ But now notice that if in a scenario we have $E_j$ holding for all $j$, then in this scenario we have $b\bZ \ge \gamma \|b\|_1$. Using the fact that the $E_j$'s are independent (due to the independence of the coordinates of $\bZ$), we have
			\begin{align*}
				\Pr(b\bZ \ge \gamma \|b\|_1) \ge \Pr\left(\bigvee_j E_j \right) \ge \left(e^{-50 \gamma^2 n} - e^{-\frac{\gamma^2 n}{4 \epsilon^2}}\right)^{\frac{3 \log n}{\epsilon}} , ~~~~~~~~ \gamma \in \left[0, \frac{1}{8} \right].
			\end{align*}
			
			Now we claim that whenever $bX \ge \gamma \|b\|_1$, then we have $a\bZ \ge \frac{\alpha}{\sqrt{n}} \left(1 - \frac{2}{n^2} \right) \|a\|_1$. First notice that $\|b\|_1 \ge \|a\|_1 - 1/n^2 \ge \|a\|_1 (1 - 1/n^2)$, since $\|a\|_1 \ge \|a\|_\infty = 1$. Moreover, with probability 1 we have $a\bZ \ge b\bZ - 1/n^2$. Therefore, whenever $b\bZ \ge \gamma \|b\|_1$: $$a\bZ \ge b\bZ - \frac{1}{n^2} \ge \gamma \|b\|_1 - \frac{1}{n^2} \ge \gamma \left(1 - \frac{1}{n^2}\right) \|a\|_1 - \frac{1}{n^2} =  \frac{\alpha}{\sqrt{n}} \left(1 - \frac{1}{n^2}\right) \|a\|_1  - \frac{1}{n^2}.$$ This concludes the proof of the lemma. 
		\end{proof}


\section{Hard Packing Integer Programs} \label{app:hardPIP}

\subsection{Proof of Lemma \ref{lemma:PIPvalidCut}}
		
		Fix $i \in [n]$. We have $\E[\sum_j \A^j_i] = \frac{mM}{2}$ and $\Var(\sum_j \A^j_i) \le \frac{mM^2}{4}$. Employing Bernstein's inequality we get
		\begin{align*}
			\Pr\left(\sum_j \A^j_i < \frac{mM}{2} - \sqrt{m \log 8n} M\right) \le \exp\left(-\min \left\{\log 8n, \frac{3 \sqrt{m \log 8n}}{4} \right\} \right) \le \frac{1}{8n},
		\end{align*}
		where the last inequality uses the assumption that $m \ge 8 \log 8n$. Similarly, we get that 
		\begin{align*}
			\Pr\left(\sum_{i,j} \A^j_i > \frac{nmM}{2} + \sqrt{n m \log 8n} M\right) \le \exp\left(-\min \left\{\log 8n, \frac{3 \sqrt{n m \log 8n}}{4} \right\} \right) \le \frac{1}{8n}.
		\end{align*}		
		
		Taking a union bound over the first displayed inequality over all $i \in [n]$ and also over the last inequality, with probability at least $1-1/4$ the valid cut $\sum_i (\frac{2}{mM} \sum_j \A^j_i) x_i \le \frac{1}{mM} \sum_{i,j} \A^j_i$ (obtained by aggregating all inequalities in the formulation) has all coefficients on the left-hand side being at least $(1 - \frac{2 \sqrt{\log 8n}}{\sqrt{m}})$ and the right-hand side at most $\frac{n}{2} + \frac{\sqrt{n \log 8}}{\sqrt{m}}$. This concludes the proof. 


	\subsection{Proof of Lemma \ref{lemma:PIPRHS}}

	 	Fix $j \in [m]$. We have $\E[\sum_{i = 1}^n \A^j_i] = \frac{n M}{2}$ and $\Var(	\sum_{i = 1}^n \A^j_i) \le n M^2/4$ and hence by Bernstein's inequality we get 
		\begin{align*}
			\Pr\left(\sum_{i = 1}^n \A^j_i > \frac{n M}{2} + M \sqrt{n \log 8 m}\right) \le \exp\left(- \min \left\{\log 8m, \frac{3 \sqrt{n \log 8 m}}{4} \right\} \right) \le \frac{1}{8m},
		\end{align*}
		 where the last inequality uses the assumption that $m \le n$. The lemma then follows by taking a union bound over all $j \in [m]$.

\end{document}